\definecolor{ForestGreen}{rgb}{0.1333,0.5451,0.1333}
\newcommand{\showccc}[0]{0}
\newcommand{\ccc}[2][nothing]{%
	\ifthenelse{\showccc=0}{}{
		\ensuremath{^{\Lsh\Rsh}}\marginpar{\raggedright\tiny\textsf{%
				\ifthenelse{\equal{#1}{nothing}}{}{\textbf{#1}\\}#2}}}}
\newcounter{hours}\newcounter{minutes}
\newcommand{\hhmm}{%
	\setcounter{hours}{\time/60}%
	\setcounter{minutes}{\time-\value{hours}*60}%
	\ifthenelse{\value{hours}<10}{0}{}\thehours:%
	\ifthenelse{\value{minutes}<10}{0}{}\theminutes}
\newtheorem{theorem}{Theorem}
\newtheorem{proposition}{Proposition}
\newtheorem{corollary}{Corollary}
\newtheorem{definition}{Definition}
\newtheorem{lemma}{Lemma}
\newtheorem{problem}{Problem}
\newcommand{\defeq}{:=}
\newcommand{\norm}[1]{\left\lVert#1\right\rVert}
\newcommand{\inprod}[2]{\left\langle#1, #2\right\rangle}
\newcommand{\eps}{\epsilon}
\newcommand{\argmax}{\textup{argmax}}
\newcommand{\argmin}{\textup{argmin}} 
\newcommand{\R}{\mathbb{R}}
\newcommand{\diag}[1]{\textbf{\textup{diag}}\left(#1\right)}
\newcommand{\half}{\frac{1}{2}}
\newcommand{\thalf}{\tfrac{1}{2}}
\newcommand{\E}{\mathbb{E}}
\newcommand{\xset}{\mathcal{X}}
\newcommand{\yset}{\mathcal{Y}}
\newcommand{\zset}{\mathcal{Z}}
\newcommand{\x}{^x}
\newcommand{\y}{^y}
\definecolor{burntorange}{rgb}{0.8, 0.33, 0.0}
\newcommand{\tO}{\tilde{O}}
\newcommand{\hx}{x_{t + \half}}
\newcommand{\hy}{y_{t + \half}}
\newcommand{\hv}{v_{t + \half}}
\newcommand{\px}{x_{t + 1}}
\newcommand{\pv}{v_{t + 1}}
\newcommand{\hxi}{x_{t + \half}^{(i)}}
\newcommand{\di}{\Delta_{t}^{(i)}}
\newcommand{\hvi}{\hv}
\newcommand{\wi}{w_t^{(i)}}
\newcommand{\pxi}{x_{t + 1}^{(i)}}
\newcommand{\pvi}{v_{t + 1}^{(i)}}
\newcommand{\pzi}{z_{t + 1}^{(i)}}
\newcommand{\mux}{\mu_x}
\newcommand{\muy}{\mu_y}
\newcommand{\lxx}{L_{xx}}
\newcommand{\lxy}{L_{xy}}
\newcommand{\lyy}{L_{yy}}
\newcommand{\Par}[1]{\left(#1\right)}
\newcommand{\Brack}[1]{\left[#1\right]}
\newcommand{\Brace}[1]{\left\{#1\right\}}
\newcommand{\lam}{\lambda}
\newcommand{\Prox}{\textup{Prox}}
\newcommand{\jac}{\mathbf{J}}
\begin{document}

\begin{titlepage}
	\def\thepage{}
	\thispagestyle{empty}
	
	\title{Relative Lipschitzness in Extragradient Methods and \\ a Direct Recipe for Acceleration} 
	
	\date{}
	\author{
		Michael B. Cohen\thanks{MIT, {\tt micohen@mit.edu}}
		\and
		Aaron Sidford\thanks{Stanford University, {\tt \{sidford, kjtian\}@stanford.edu}}
		\and
		Kevin Tian\footnotemark[2] 
	}
	\maketitle

\abstract{
We show that standard extragradient methods (i.e.\ mirror prox \cite{Nemirovski04} and dual extrapolation \cite{Nesterov07}) recover optimal accelerated rates for first-order minimization of smooth convex functions. To obtain this result we provide a fine-grained characterization of the convergence rates of extragradient methods for solving monotone variational inequalities in terms of a natural condition we call \emph{relative Lipschitzness}. We further generalize this framework to handle local and randomized notions of relative Lipschitzness and thereby recover rates for box-constrained $\ell_\infty$ regression based on area convexity \cite{Sherman17} and complexity bounds achieved by accelerated (randomized) coordinate descent \cite{ZhuQRY16, NesterovS17} for smooth convex function minimization.
}
 	
\end{titlepage}
\pagenumbering{gobble}
\newpage
\setcounter{tocdepth}{2}
{
	\hypersetup{linkcolor=black}
	\tableofcontents
}
\newpage
\pagenumbering{arabic}
\section{Introduction}
\label{sec:intro}

We study the classic extragradient algorithms of mirror prox \cite{Nemirovski04} and dual extrapolation \cite{Nesterov07} for solving variational inequalities (VIs) in monotone operators. 
This family of problems includes convex optimization and finding the saddle point of a convex-concave game. Due to applications of the latter to adversarial and robust training, extragradient methods have received significant recent attention in the machine learning community, see e.g.\ \cite{ChavdarovaGFL19, MertikopoulosLZ19, HsiehIMM19}. Further, extragradient methods have been the subject of increasing study by the theoretical computer science and optimization communities due to recent extragradient-based
runtime improvements for problems including maximum flow \cite{Sherman17} and zero-sum games \cite{CarmonJST19, CarmonJST20}.

Given a Lipschitz monotone operator and a bounded strongly-convex regularizer, mirror prox \cite{Nemirovski04} and dual extrapolation \cite{Nesterov07} achieve $O(T^{-1})$ regret for solving the associated VI after $T$ iterations. This rate is worst-case optimal when the Lipschitzness of the operator and strong convexity of the regularizer are with respect to the Euclidean norm \cite{OuyangX19}. However, in certain structured problems related to VIs, alternative analyses and algorithms can yield improved rates. For instance, when minimizing a smooth convex function (i.e. one with a Lipschitz gradient), it is known that accelerated rates of $O(T^{-2})$ are attainable, improving upon the standard $O(T^{-1})$ extragradient rate for the naive associated VI (see \Cref{sec:naive_smooth_rate}). Further, algorithms inspired by extragradient methods have been developed recovering the $O(T^{-2})$ rate \cite{DiakonikolasO18, WangA18}.

Additionally, alternative analyses of extragradient methods, such as optimism \cite{RakhlinS13} and area convexity \cite{Sherman17} have shown that under assumptions beyond a Lipschitz operator and a strongly convex regularizer, improved rates can be achieved. These works leveraged modified algorithms which run efficiently under such non-standard assumptions. Further, the area convexity-based methods of \cite{Sherman17} have had a number of implications, including faster algorithms for $\ell_\infty$ regression, maximum flow and multicommodity flow \cite{Sherman17} as well as improved parallel algorithms for work-efficient positive linear programming \cite{BoobSW19} and optimal transport \cite{JambulapatiST19}.

In this work we seek a better understanding of these structured problems and the somewhat disparate-seeming analyses and algorithms for solving them. We ask, \emph{are the algorithmic changes enabling these results necessary? Can standard mirror prox and dual extrapolation be leveraged to obtain these results? Can we unify analyses for these problems, and further clarify the relationship between acceleration, extragradient methods, and primal-dual methods?}

Towards addressing these questions, we provide a general condition, which we term \emph{relative Lipschitzness} (cf.\ Definition~\ref{def:spc}), and analyze the convergence of mirror prox and dual extrapolation for a monotone relatively Lipschitz operator.\footnote{A somewhat similarly-named property appeared in \cite{Lu19}, which also studied mirror descent algorithms under relaxed conditions; their property $\norm{g(x)}_*^2\le \frac{MV_x(y)}{\norm{y - x}^2}$ for all $x, y$, is different than the one we propose. Further, during the preparation of this work, the relative Lipschitzness condition we propose was also independently stated in \cite{StonyakinaTGDADPAP20}. However, the work \cite{StonyakinaTGDADPAP20} does not derive the various consequences of relative Lipschitzness contained in this work (e.g.\ recovery of acceleration and randomized acceleration, as well as applications of area convexity).} This condition is derived directly from the standard analysis of the methods and is stated in terms of a straightforward relationship between the operator $g$ and the regularizer $r$ which define the algorithm. Our condition is inspired by both area convexity as well as the ``relative smoothness'' condition in convex optimization \cite{BauschkeBT17, LuFN18}, and can be thought of as a generalization of the latter to variational inequalities (see \Cref{lem:rel_lipschitz}). Further, through this analysis we show that standard extragradient methods directly yield accelerated rates for smooth minimization and recover the improved rates of \cite{Sherman17} for box-constrained $\ell_\infty$ regression, making progress on the questions outlined above. We also show our methods recover certain randomized accelerated rates and have additional implications, outlined below.

\textbf{Extragradient methods directly yield acceleration.} In Section~\ref{sec:acceleration}, we show that applying algorithms of \cite{Nemirovski04, Nesterov07} to a minimax formulation of $\min_{x \in \R^d} f(x)$, when $f$ is smooth and strongly convex, yields accelerated minimization rates when analyzed via relative Lipschitzness. Specifically, we consider the following problem, termed the \emph{Fenchel game} in \cite{WangA18}:
\begin{equation}\label{eq:pdobj}\min_{x \in \R^d} \max_{y \in \R^d} \inprod{y}{x} - f^*(y),\end{equation}
and show that when $f$ is $\mu$-strongly convex and $L$-smooth, $O(\sqrt{L/\mu})$ iterations of either mirror prox \cite{Nemirovski04} or dual extrapolation \cite{Nesterov07} produces an average iterate which halves the function error of $f$. By repeated application, this yields an accelerated linear rate of convergence and the optimal $O(T^{-2})$ rates for non-strongly convex, smooth function minimization by a reduction \cite{ZhuH16}.  Crucially, to attain this rate we give a sharpened bound on the relative Lipschitzness of the gradient operator of \eqref{eq:pdobj} with respect to a primal-dual regularizer. 

Our result advances a recent line of research, \cite{AbernethyLLW18, AbernethyLW19, DiakonikolasO18}, on applying primal-dual analyses to shed light on the mysterious nature of acceleration. Specifically, \cite{AbernethyLLW18, AbernethyLW19} show that the classical algorithm of \cite{Nesterov83} can be rederived via applying primal-dual ``optimistic'' dynamics, inspired by the framework of \cite{RakhlinS13}. Further,  \cite{DiakonikolasO18} showed that an appropriate discretization of dynamics inspired by extragradient algorithms yields an alternative accelerated algorithm. While these results clarify the primal-dual nature of acceleration, additional tuning is ultimately required to obtain their final algorithms and analysis. We obtain acceleration as a direct application of known frameworks, i.e.\ standard mirror prox and dual extrapolation, applied to the formulation \eqref{eq:pdobj}, and hope this helps demystify acceleration. 

In Appendix~\ref{appendix:generalize}, we further show that analyzing extragradient methods tailored to strongly monotone operators via relative Lipschitzness, and applying this more fine-grained analysis to a variant of the objective \eqref{eq:pdobj}, also yields an accelerated linear rate of convergence. The resulting proof strategy extends readily to accelerated minimization of smooth and strongly convex functions in general norms, as we discuss at the end of Section~\ref{sec:acceleration}, and we believe it may be of independent interest.

Finally, we remark that there has been documented difficulty in accelerating the minimization of relatively smooth functions \cite{HanzelyRX18}; this was also explored more formally by \cite{DragomirTdAB19}. It is noted in \cite{HanzelyRX18}, as well as suggested in others (e.g.\ in the development of area convexity \cite{Sherman17}) that this discrepancy may be due to acceleration fundamentally requiring conditions on relationships between groups of three points, rather than two. Our work, which presents an alternative three-point condition yielding accelerated rates, sheds light on this phenomenon and we believe it is an interesting future direction to explore the relationships between our condition and other alternatives in the literature which are known to yield acceleration.

\textbf{Area convexity for bilinear box-simplex games.} In Section~\ref{sec:areaconvex}, we draw a connection between relative Lipschitzness and the notion of an ``area convex'' regularizer, proposed by \cite{Sherman17}. Area convexity is a property which weakens strong convexity, but is suitable for extragradient algorithms with a linear operator. It was introduced in the context of solving a formulation of approximate undirected maximum flow via box-constrained $\ell_\infty$ regression, or more generally approximating bilinear games between a box variable and a simplex variable. The algorithm of \cite{Sherman17} applied to bilinear games was a variant of standard extragradient methods and analyzed via area convexity, which was proven via solving a subharmonic partial differential equation. We show that mirror prox, as analyzed by a local variant of relative Lipschitzness, yields the same rate of convergence as implied by area convexity, for box-simplex games. Our proof of this rate is straightforward and based on a simple Cauchy-Schwarz argument after demonstrating local stability of iterates. %

\textbf{Randomized extragradient methods via local variance reduction.} In general, the use of stochastic operator estimates in the design of extragradient algorithms for solving general VIs is not as well-understood as their use in the special case of convex function minimization. The best-known known stochastic methods for solving VIs \cite{JuditskyNT11} with bounded-variance stochastic estimators obtain $O(T^{-1/2})$ rates of convergence; this is by necessity, from known classical lower bounds on the rate of the special case of  stochastic convex optimization \cite{NemirovskiY83}. 
 Towards advancing the randomized extragradient toolkit, we ask: when can improved $O(T^{-1})$ rates of convergence be achieved by stochastic algorithms for solving specific VIs and fine-grained bounds on estimator variance (i.e.\ more local notions of variance)? This direction is inspired by analogous results in convex optimization, where reduced-variance and accelerated rates have been obtained, matching and improving upon their deterministic counterparts \cite{Johnson013, SchmidtRB17, Zhu17, LeeS13, NesterovS17, ZhuQRY16}.  

For the special case of bilinear games, this question was recently addressed by the works \cite{PalaniappanB16, CarmonJST19}, using proximal reductions to attain improved rates. In this work, we give a framework for direct stochastic extragradient method design bypassing the variance bottleneck limiting prior algorithms to a $O(T^{-1/2})$ rate of convergence for problems with block-separable structure. We identify a particular criterion of randomized operators used in the context of extragradient algorithms (cf.\ Proposition~\ref{prop:generalmp}) which enables $O(T^{-1})$ rates of convergence. Our approach is a form of ``local variance reduction'', where estimators in an iteration of the method share a random seed and we take expectations over the whole iteration in the analysis. Our improved estimator design exploits the separable structure of the problem; it would be interesting to design a more general variance reduction framework for randomized extragradient methods.

Formally, we apply our local variance reduction framework in Section~\ref{sec:coordinate} to show that an instance of our new randomized extragradient methods recover acceleration for coordinate-smooth functions, matching the known tight rates of \cite{ZhuQRY16, NesterovS17}. Along the way, we give a variation of relative Lipschitzness capturing an analagous property between a locally variance-reduced randomized gradient estimator and a regularizer, which we exploit to obtain our runtime. We note that a similar approach was taken in \cite{SidfordT18} to obtain faster approximate maximum flow algorithms in the bilinear minimax setting; here, we generalize this strategy and give conditions under which our variance reduction technique obtains improved rates for extragradient methods more broadly.

\textbf{Additional contributions.} A minor contribution of our work is that we show (Appendix~\ref{app:minimax}) that relative Lipschitzness implies new rates for minimax convex-concave optimization, taking a step towards closing the gap with lower bounds with \emph{fine-grained} dependence on problem parameters. Under operator-norm bounds on blocks of the Hessian of a convex-concave function, as well as blockwise strong convexity assumptions, \cite{ZhangHZ19} showed a lower bound on the convergence rate to obtain an $\eps$-approximate saddle point. When the blockwise operator norms of the Hessian are roughly equal, \cite{LinJJ20} gave an algorithm matching the lower bound up to a polylogarithmic factor, using an alternating scheme repeatedly calling an accelerated proximal point reduction. Applying our condition with a strongly monotone variant of the mirror prox algorithm of \cite{Nemirovski04} yields a new fine-grained rate for minimax optimization, improving upon the runtime of \cite{LinJJ20} for a range of parameters. Our algorithm is simple and the analysis follows directly from a tighter relative Lipschitzness bound; we note the same result can also be obtained by considering an operator norm bound of the problem after a rescaling of space, but we include this computation because it is a straightforward implication of our condition.

Finally, in  Appendix~\ref{appendix:area_convex}, we discuss the relation of relative Lipschitzness to another framework for analyzing extragradient methods: namely, we note that our proof of the sufficiency of relative Lipschitzness recovers known bounds for optimistic mirror descent \cite{RakhlinS13}. 
\section{Notation}
\label{sec:definitions}

\textbf{General notation.}
Variables are in $\mathbb{R}^d$ unless otherwise noted. $e_i$ is the $i^{th}$ standard basis vector. $\norm{\cdot}$ denotes an arbitrary norm; the dual norm is $\norm{\cdot}_*$, defined as $\norm{x}_* \defeq \max_{\norm{y} \leq 1} y^\top x$. For a variable on two blocks $z \in \xset \times \yset$, we refer to the blocks by $z\x$ and $z\y$. We denote the domain of $f: \R^d \rightarrow \R$ by $\xset$; when unspecified, $\xset = \R^d$. When $f$ is clear from context, $x^*$ is any minimizing argument. We call any $x$ with $f(x) \leq f(x^*) + \epsilon$ an \emph{$\epsilon$-approximate minimizer}. 

\textbf{Bregman divergences.}
The Bregman divergence induced by distance-generating convex $r$ is
\begin{equation*}
V^r_x(y) \defeq r(y) - r(x) - \inprod{\nabla r(x)}{y - x}.
\end{equation*}
The Bregman divergence is always nonnegative, and convex in its argument. We define the following proximal operation with respect to a divergence from point $z$.
\begin{equation}\label{eq:proxdef}
\textup{Prox}^r_{x}(g) \defeq \textup{argmin}_{y} \left\{\inprod{g}{y} + V^r_x(y)\right\}.
\end{equation}

\textbf{Functions.}
We say $f$ is $L$-smooth in $\norm{\cdot}$ if $\norm{\nabla f(x) - \nabla f(y)}_* \leq L \norm{x - y}$, or equivalently $f(y) \leq f(x) + \inprod{\nabla f(x)}{y - x} + \frac{L}{2}\norm{y - x}^2$ for $x, y \in \xset$. If $f$ is twice-differentiable, equivalently $y^\top \nabla^2 f(x) y \leq L \norm{y}^2$. We say differentiable $f$ is $\mu$-strongly convex if for some $\mu \geq 0$, $f(y) \geq f(x) + \inprod{\nabla f(x)}{y - x} + \frac{\mu}{2}\|y - x\|^2$ for $x, y \in \xset$. We also say $f$ is $\mu$-strongly convex with respect to a distance-generating function $r$ if $V^f_x(y) \ge \mu V^r_x(y)$ for all $x, y \in \xset$. Further, we use standard results from convex analysis throughout, in particular facts about Fenchel duality, and defer these definitions and proofs to Appendix~\ref{appendix:convex_analysis}.

\textbf{Saddle points.}
We call function $h(x, y)$ of two variables \emph{convex-concave} if its restrictions to $x$ and $y$ are convex and concave respectively. We call $(x, y)$ an \emph{$\epsilon$-approximate saddle point} if $\max_{y'} \{h(x, y')\} - \min_{x'} \{h(x', y) \} \leq \epsilon.$ We equip any differentiable convex-concave function with gradient operator $g(x, y) \defeq (\nabla_x h(x, y), -\nabla_y h(x, y))$. 

\textbf{Monotone operators.} We call operator $g: \zset \rightarrow \zset^*$ monotone if $\inprod{g(w) - g(z)}{w - z} \ge 0$ for all $w$, $z\in \zset$. Examples include  the gradient of a convex function and gradient operator of a convex-concave function. We call $g$ $m$-strongly monotone with respect to $r$ if $\inprod{g(w) - g(z)}{w - z} \ge m\Par{V^r_w(z) + V^r_z(w)} = m\inprod{\nabla r(w) - \nabla r(z)}{w - z}$. We call $z^* \in \zset$ the solution to the variational inequality (VI) in a monotone operator $g$ if $\inprod{g(z^*)}{z^* - z} \le 0$ for all $z \in \zset$.\footnote{This is also known as a ``strong solution''. A ``weak solution'' is a $z^*$ with $\inprod{g(z)}{z^* - z} \le 0$ for all $z$.} Examples include the minimizer of a convex function and the saddle point of a convex-concave function.
\section{Extragradient convergence under relative Lipschitzness}
\label{sec:overview}

\begin{algorithm}
	\caption{$\textsc{Mirror-Prox}(z_0, T)$: Mirror prox \cite{Nemirovski04}}
	\begin{algorithmic}\label{alg:mp}
		\STATE \textbf{Input:} Distance generating $r$, $\lam$-relatively Lipschitz monotone $g: \zset \rightarrow \zset^*$, initial point $z_0 \in \zset$
		\FOR {$0 \le t < T$}
		\STATE $w_t \gets \Prox^r_{z_t}(\tfrac{1}{\lam}g(z_t))$
		\STATE $z_{t + 1} \gets \Prox^r_{z_t}(\tfrac{1}{\lam}g(w_t))$
		\ENDFOR
	\end{algorithmic}
\end{algorithm}

We give a brief presentation of mirror prox \cite{Nemirovski04}, and a convergence analysis under relative Lipschitzness. Our results also hold for dual extrapolation \cite{Nesterov07}, which can be seen as a ``lazy'' version of mirror prox updating a state in dual space (see \cite{Bubeck15}); we defer details to Appendix~\ref{appendix:dual_extrapolation}.

\begin{definition}[Relative Lipschitzness]
	\label{def:spc}
	For convex $r:\zset \rightarrow \R$, we call operator $g: \zset \rightarrow \zset^*$ \emph{$\lam$-relatively Lipschitz with respect to $r$} if for every three $z, w, u \in \zset$,
	\begin{equation*}
	\inprod{g(w) - g(z)}{w - u} \leq \lam \left(V^r_z(w) + V^r_w(u)\right)
	\end{equation*}
\end{definition}

Definition~\ref{def:spc} can be thought of as an alternative to a natural nonlinear analog of the area convexity condition of \cite{Sherman17} displayed below:
\[\inprod{g(w) - g(z)}{w - u} \le \lam\Par{r(z) + r(w) - r(u) - 3r\Par{\frac{z + w + u}{3}}}.\] 
Our proposed alternative is well-suited for the standard analyses of extragradient methods such as mirror prox and dual extrapolation. For the special case of bilinear minimax problems in a matrix $A$, the left hand side of Definition~\ref{def:spc} measures the area of a triangle in a geometry induced by $A$.

Relative Lipschitzness encapsulates the more standard assumptions that $g$ is Lipschitz and $r$ is strongly convex (\Cref{lem:basiclam}), as well as the more recent assumptions that $f$ is convex and relatively smooth with respect to $r$ \cite{BauschkeBT17, LuFN18} (\Cref{lem:rel_lipschitz}). We defer proofs to Appendix~\ref{appendix:convex_analysis}.

\begin{restatable}{lemma}{restatebasiclam}
\label{lem:basiclam}
	If $g$ is $L$-Lipschitz and $r$ is $\mu$-strongly convex in $\norm{\cdot}$, $g$ is $L/\mu$-relatively Lipschitz with respect to $r$.
\end{restatable}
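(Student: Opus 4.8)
The plan is to bound the left-hand side $\inprod{g(w) - g(z)}{w - u}$ directly by combining the Lipschitz bound on $g$ with the strong convexity of $r$, using Young's inequality (arithmetic-geometric mean) to split a product of norms into the two Bregman divergence terms.

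\textbf{Step 1.} First I would apply Cauchy--Schwarz with respect to the norm $\norm{\cdot}$ and its dual: since $g(w) - g(z) \in \zset^*$ and $w - u \in \zset$,
\[
\inprod{g(w) - g(z)}{w - u} \le \norm{g(w) - g(z)}_* \norm{w - u}.
\]
Then I would use $L$-Lipschitzness of $g$ to replace $\norm{g(w) - g(z)}_*$ by $L\norm{w - z}$, obtaining the bound $L \norm{w - z}\norm{w - u}$.

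\textbf{Step 2.} Next I would apply the inequality $ab \le \thalf a^2 + \thalf b^2$ with $a = \norm{w - z}$ and $b = \norm{w - u}$, giving
\[
\inprod{g(w) - g(z)}{w - u} \le \frac{L}{2}\norm{w - z}^2 + \frac{L}{2}\norm{w - u}^2.
\]

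\textbf{Step 3.} Finally I would invoke $\mu$-strong convexity of $r$ in $\norm{\cdot}$, which gives $V^r_z(w) \ge \frac{\mu}{2}\norm{w - z}^2$ and $V^r_w(u) \ge \frac{\mu}{2}\norm{w - u}^2$. Substituting, $\frac{L}{2}\norm{w - z}^2 \le \frac{L}{\mu} V^r_z(w)$ and likewise $\frac{L}{2}\norm{w - u}^2 \le \frac{L}{\mu} V^r_w(u)$, so the right-hand side is at most $\frac{L}{\mu}\left(V^r_z(w) + V^r_w(u)\right)$, which is exactly the claimed $\lam = L/\mu$ relative Lipschitzness bound.

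This argument is entirely routine — there is no real obstacle, since each of the three ingredients (Cauchy--Schwarz, AM--GM, and the quadratic lower bound on the Bregman divergence from strong convexity) is standard. The only point requiring a moment of care is making sure the norm pairing in Step 1 is the correct primal/dual pairing given that $g$ maps into $\zset^*$, and that the symmetric split in Step 2 matches the asymmetric-looking pair $(V^r_z(w), V^r_w(u))$ appearing in Definition~\ref{def:spc} — which it does, because the first divergence controls $\norm{w-z}$ and the second controls $\norm{w-u}$.
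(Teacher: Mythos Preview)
Your proposal is correct and follows essentially the same approach as the paper: Cauchy--Schwarz, then Lipschitzness of $g$, then the AM--GM split, then strong convexity of $r$ to pass to Bregman divergences. The paper's proof is exactly this chain of inequalities written in a single display.
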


\begin{restatable}{lemma}{restaterelsmooth}
\label{lem:rel_lipschitz}
If $f$ is $L$-relatively smooth with respect to $r$, i.e. $V^f_x(y) \leq L V^r_x(y)$ for all $x$ and $y$, then $g$, defined by $g(x) \defeq \nabla f(x)$ for all $x$, is $L$-relatively Lipschitz with respect to $r$.
\end{restatable}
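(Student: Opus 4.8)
The plan is to rewrite the left-hand side of Definition~\ref{def:spc} for $g = \nabla f$ using a three-point identity for Bregman divergences, so that the ``area''-type quantity collapses to a difference of Bregman divergences of $f$, which relative smoothness then controls termwise. Concretely, for any $z, w, u \in \zset$ I would split
\[
\inprod{g(w) - g(z)}{w - u} = \inprod{\nabla f(w) - \nabla f(z)}{w - z} + \inprod{\nabla f(w) - \nabla f(z)}{z - u}.
\]
The first inner product equals the symmetrized Bregman divergence $V^f_z(w) + V^f_w(z)$, directly from the definition $V^f_x(y) = f(y) - f(x) - \inprod{\nabla f(x)}{y-x}$. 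A short expansion of the three relevant divergences shows the second inner product equals $-V^f_w(z) - V^f_z(u) + V^f_w(u)$. Adding these, the $V^f_w(z)$ terms cancel and I obtain the identity
\[
\inprod{g(w) - g(z)}{w - u} = V^f_z(w) + V^f_w(u) - V^f_z(u).
\]

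Given this identity, the rest is immediate. Convexity of $f$ (which is implied by $L$-relative smoothness together with convexity of $r$, or can simply be assumed as part of $f$ being a differentiable convex function) gives $V^f_z(u) \ge 0$, so the right-hand side is at most $V^f_z(w) + V^f_w(u)$. Applying the relative smoothness hypothesis $V^f_x(y) \le L\, V^r_x(y)$ to each of the two remaining terms yields
\[
\inprod{g(w) - g(z)}{w - u} \le V^f_z(w) + V^f_w(u) \le L\Par{V^r_z(w) + V^r_w(u)},
\]
which is exactly $\lam$-relative Lipschitzness of $g$ with respect to $r$ for $\lam = L$.

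I expect no genuine obstacle here; the only step requiring any care is verifying the Bregman-divergence identity, which is a routine but slightly fiddly computation where one must check that the $\inprod{\nabla f(\cdot)}{\cdot}$ cross terms telescope correctly. Conceptually, the lemma is just the observation that for a gradient operator the three-point expression in Definition~\ref{def:spc} simplifies, via cancellation of the middle point's contribution, to $V^f_z(w) + V^f_w(u) - V^f_z(u)$, after which discarding the nonnegative term and invoking relative smoothness finishes the proof.
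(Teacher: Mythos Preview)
Your proposal is correct and takes essentially the same approach as the paper: both derive the identity $\inprod{g(w) - g(z)}{w - u} = V^f_z(w) + V^f_w(u) - V^f_z(u)$, drop the nonnegative $V^f_z(u)$, and apply relative smoothness termwise. The only difference is cosmetic---you split the inner product into two pieces before assembling the identity, whereas the paper expands $V^f_z(w) + V^f_w(u)$ directly and rearranges.
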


We now give an analysis of Algorithm~\ref{alg:mp} showing the average ``regret'' $\inprod{g(w_t)}{w_t - u}$ of iterates decays at a $O(T^{-1})$ rate. This strengthens Lemma 3.1 of \cite{Nemirovski04}.

\begin{proposition}
	\label{prop:mirrorprox}
	The iterates $\{w_t\}$ of Algorithm~\ref{alg:mp} satisfy for all $u \in \zset$, 
	\[\sum_{0 \le t < T} \inprod{g(w_t)}{w_t - u} \le \lam V^r_{z_0}(u).\]
\end{proposition}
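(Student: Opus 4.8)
The plan is to analyze a single iteration of Algorithm~\ref{alg:mp} and then telescope. Fix $t$ and $u \in \zset$. The key tool is the standard three-point inequality for the proximal/mirror-descent step: if $w = \Prox^r_z(g)$, then $w$ is the minimizer of the convex function $y \mapsto \inprod{g}{y} + V^r_z(y)$, so the first-order optimality condition (or the ``three-point identity'' for Bregman divergences) gives
\[
\inprod{g}{w - u} \le V^r_z(u) - V^r_z(w) - V^r_w(u) \quad \text{for all } u \in \zset.
\]
I would apply this twice. First, with the extragradient step $z_{t+1} = \Prox^r_{z_t}(\tfrac{1}{\lam}g(w_t))$ and the chosen comparator $u$, scaling by $\lam$:
\[
\inprod{g(w_t)}{z_{t+1} - u} \le \lam\Par{V^r_{z_t}(u) - V^r_{z_t}(z_{t+1}) - V^r_{z_{t+1}}(u)}.
\]
Second, with the half-step $w_t = \Prox^r_{z_t}(\tfrac{1}{\lam}g(z_t))$ and comparator $z_{t+1}$, scaling by $\lam$:
\[
\inprod{g(z_t)}{w_t - z_{t+1}} \le \lam\Par{V^r_{z_t}(z_{t+1}) - V^r_{z_t}(w_t) - V^r_{w_t}(z_{t+1})}.
\]

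Now I combine. Write $\inprod{g(w_t)}{w_t - u} = \inprod{g(w_t)}{z_{t+1} - u} + \inprod{g(w_t) - g(z_t)}{w_t - z_{t+1}} + \inprod{g(z_t)}{w_t - z_{t+1}}$. The first term and the third term are bounded by the two displayed inequalities above; adding them, the $\pm\lam V^r_{z_t}(z_{t+1})$ terms cancel, leaving
\[
\lam\Par{V^r_{z_t}(u) - V^r_{z_{t+1}}(u) - V^r_{z_t}(w_t) - V^r_{w_t}(z_{t+1})}.
\]
For the middle term $\inprod{g(w_t) - g(z_t)}{w_t - z_{t+1}}$, this is precisely where relative Lipschitzness (Definition~\ref{def:spc}) enters: applying it with the substitution $z \to z_t$, $w \to w_t$, $u \to z_{t+1}$ gives $\inprod{g(w_t) - g(z_t)}{w_t - z_{t+1}} \le \lam\Par{V^r_{z_t}(w_t) + V^r_{w_t}(z_{t+1})}$. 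Adding everything, the two $\lam V^r_{z_t}(w_t)$ and two $\lam V^r_{w_t}(z_{t+1})$ terms cancel exactly, yielding the clean per-iteration bound
\[
\inprod{g(w_t)}{w_t - u} \le \lam\Par{V^r_{z_t}(u) - V^r_{z_{t+1}}(u)}.
\]

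Finally I sum this over $0 \le t < T$; the right-hand side telescopes to $\lam\Par{V^r_{z_0}(u) - V^r_{z_T}(u)} \le \lam V^r_{z_0}(u)$ by nonnegativity of the Bregman divergence, giving the claim. The main obstacle — really the only subtle point — is getting the bookkeeping of the cancellations right: one must decompose $\inprod{g(w_t)}{w_t - u}$ into exactly the three pieces that match the two optimality inequalities and the relative Lipschitzness bound, and check that every $V^r_{z_t}(z_{t+1})$, $V^r_{z_t}(w_t)$, $V^r_{w_t}(z_{t+1})$ term produced with a plus sign is matched by one with a minus sign. A secondary point worth stating carefully is the validity of the first-order optimality inequality when $\zset$ is constrained (rather than all of $\R^d$); this follows from the variational characterization of the $\argmin$ in \eqref{eq:proxdef} together with convexity of $V^r_z(\cdot)$, and holds for arbitrary $u \in \zset$.
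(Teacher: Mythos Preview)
Your proof is correct and is essentially identical to the paper's argument: both apply the three-point optimality inequality (the paper's Lemma~\ref{lem:proxprop}) to the two proximal steps, decompose $\inprod{g(w_t)}{w_t - u}$ into the same three pieces, invoke relative Lipschitzness on $\inprod{g(w_t) - g(z_t)}{w_t - z_{t+1}}$, and telescope. The only cosmetic difference is that the paper works with the inequalities divided by $\lam$ rather than multiplied through.
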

\begin{proof}
First-order optimality conditions of $w_t$, $z_{t + 1}$ with respect to $u$ imply (see Lemma~\ref{lem:proxprop})
\begin{equation}\label{eq:foopt}
\begin{aligned}
\frac{1}{\lam}\inprod{g(z_t)}{w_t - z_{t + 1}} &\leq V^r_{z_t}(z_{t + 1}) - V^r_{w_t}(z_{t + 1}) - V^r_{z_t}(w_t), \\
\frac{1}{\lam}\inprod{g(w_t)}{z_{t + 1} - u} &\leq V^r_{z_t}(u) - V^r_{z_{t + 1}}(u) - V^r_{z_t}(z_{t + 1}).
\end{aligned}
\end{equation}
Adding and manipulating gives, via relative Lipschitzness (Definition~\ref{def:spc}),
\begin{equation}
\begin{aligned}
\label{eq:telescope}
\frac{1}{\lam}\inprod{g(w_t)}{w_t - u} &\leq V^r_{z_t}(u) - V^r_{z_{t + 1}}(u) + \frac{1}{\lam}\inprod{g(w_t) - g(z_t)}{w_t - z_{t + 1}} - V^r_{w_t}(z_{t + 1}) - V^r_{z_t}(w_t) \\&\leq V^r_{z_t}(u) - V^r_{z_{t + 1}}(u).
\end{aligned}
\end{equation}
Finally, summing and telescoping \eqref{eq:telescope} yields the desired conclusion.
\end{proof}

We briefly comment on how to use Proposition~\ref{prop:mirrorprox} to approximately solve convex-concave games in a function $f(x, y)$. By applying convexity and concavity appropriately to the regret guarantee (and dividing by $T$, the iteration count), one can replace the left hand side of the guarantee with the duality gap of an average point $\bar{w}$ against a point $u$, namely $f(w^x, u^y) - f(u^x, w^y)$. By maximizing the right hand side over $u$, this can be converted into an overall duality gap guarantee. For some of our applications in following sections, $u$ will be some fixed point (rather than a best response) and the regret statement will be used in a more direct manner to prove guarantees. %
\section{Acceleration via relative Lipschitzness}
\label{sec:acceleration}

We show that directly applying Algorithm~\ref{alg:mp} to the optimization problem \eqref{eq:pdobj} recovers an accelerated rate for first-order convex function minimization (for simplicity, we focus on the $\ell_2$ norm here; our methods extend to general norms, discussed in Appendix~\ref{appendix:generalize}). Our main technical result, Lemma~\ref{lem:betterlambda}, shows the gradient operator of \eqref{eq:pdobj} is relatively Lipschitz in the natural regularizer induced by $f$, which combined with Proposition~\ref{prop:mirrorprox} gives our main result, Theorem~\ref{thm:accel}. Crucially, our method regularizes the dual variable with $f^*$, the Fenchel dual of $f$, which we show admits efficient implementation, allowing us to obtain our improved bound on the relative Lipschitzness parameter.

\begin{lemma}[Relative Lipschitzness for the Fenchel game]
	\label{lem:betterlambda}
	Let $f: \R^d \rightarrow \R$ be $L$-smooth and $\mu$-strongly convex in the Euclidean norm $\norm{\cdot}_2$. Let $g(x, y) = \Par{y, \nabla f^*(y) - x}$ be the gradient operator of the convex-concave problem \eqref{eq:pdobj}, and define the distance-generating function $r(x, y) \defeq \tfrac{\mu}{2}\norm{x}_2^2 + f^*(y)$. Then, $g$ is $1 + \sqrt{\tfrac{L}{\mu}}$-relatively Lipschitz with respect to $r$.
\end{lemma}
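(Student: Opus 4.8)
The plan is to verify the three-point inequality of Definition~\ref{def:spc} directly, exploiting the block-separable structure of both $g$ and $r$. Fix arbitrary $z,w,u \in \R^d \times \R^d$, written $z = (z\x,z\y)$ and so on. Since $g(x,y) = \Par{y,\nabla f^*(y) - x}$, I would first expand
\[ \inprod{g(w) - g(z)}{w - u} = \inprod{w\y - z\y}{w\x - u\x} - \inprod{w\x - z\x}{w\y - u\y} + \inprod{\nabla f^*(w\y) - \nabla f^*(z\y)}{w\y - u\y}, \]
and note that, because $r(x,y) = \tfrac{\mu}{2}\norm{x}_2^2 + f^*(y)$ is a sum of a function of $x$ and a function of $y$, its Bregman divergence separates: $V^r_z(w) = \tfrac{\mu}{2}\norm{w\x - z\x}_2^2 + V^{f^*}_{z\y}(w\y)$, and similarly $V^r_w(u) = \tfrac{\mu}{2}\norm{w\x - u\x}_2^2 + V^{f^*}_{w\y}(u\y)$. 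The goal is then to charge the three terms of the expansion against $\lam\Par{V^r_z(w) + V^r_w(u)}$ with $\lam = 1 + \sqrt{L/\mu}$.

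For the ``dual curvature'' term I would use the standard three-point identity $\inprod{\nabla\phi(b) - \nabla\phi(a)}{b - c} = V^\phi_a(b) + V^\phi_b(c) - V^\phi_a(c)$ with $\phi = f^*$ and $(a,b,c) = (z\y,w\y,u\y)$; discarding the nonnegative term $V^{f^*}_{z\y}(u\y)$ bounds it by $V^{f^*}_{z\y}(w\y) + V^{f^*}_{w\y}(u\y)$. For each of the two cross terms I would apply Young's inequality with a single weight $\beta > 0$, bounding them by $\tfrac{\beta}{2}\norm{w\x - u\x}_2^2 + \tfrac{1}{2\beta}\norm{w\y - z\y}_2^2$ and $\tfrac{\beta}{2}\norm{w\x - z\x}_2^2 + \tfrac{1}{2\beta}\norm{w\y - u\y}_2^2$ respectively. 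Splitting $\lam = 1 + (\lam - 1)$, the ``$1\times$'' copies of $V^{f^*}_{z\y}(w\y)$ and $V^{f^*}_{w\y}(u\y)$ absorb the three-point-identity contribution, while the ``$(\lam-1)\times$'' copies absorb the $\tfrac{1}{2\beta}\norm{\cdot}_2^2$ dual terms once I invoke the conjugate-duality fact that $L$-smoothness of $f$ makes $f^*$ be $\tfrac{1}{L}$-strongly convex in $\norm{\cdot}_2$, giving $V^{f^*}_a(b) \ge \tfrac{1}{2L}\norm{a - b}_2^2$; this step needs $\tfrac{1}{2\beta} \le \tfrac{\lam - 1}{2L}$. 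The primal terms $\tfrac{\beta}{2}\norm{\cdot}_2^2$ are then charged directly against $\lam\tfrac{\mu}{2}\norm{\cdot}_2^2$, which needs $\beta \le \lam\mu$.

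The only real content, and the step that fixes the value of $\lam$, is to check that the constraints $\tfrac{L}{\lam - 1} \le \beta \le \lam\mu$ on $\beta$ are compatible. With $\lam = 1 + \sqrt{L/\mu}$ we have $\lam - 1 = \sqrt{L/\mu}$, so $\tfrac{L}{\lam - 1} = \sqrt{\mu L}$ while $\lam\mu = \mu + \sqrt{\mu L} \ge \sqrt{\mu L}$; hence $\beta = \sqrt{\mu L}$ works, saturating the dual constraint and leaving $\tfrac{\mu}{2}$ of slack on the primal side (consistent with having also discarded $V^{f^*}_{z\y}(u\y) \ge 0$). I do not anticipate a genuine obstacle: once separability is used the argument is elementary, and the only things to keep straight are the bookkeeping of which copy of each Bregman term cancels which error term, and the standard conjugate duality relating $L$-smoothness of $f$ to $\tfrac{1}{L}$-strong convexity of $f^*$.
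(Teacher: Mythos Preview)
Your proposal is correct and is essentially the paper's own argument: the same expansion of $\inprod{g(w)-g(z)}{w-u}$, the same three-point identity bounding $\inprod{\nabla f^*(w\y)-\nabla f^*(z\y)}{w\y-u\y}$ by $V^{f^*}_{z\y}(w\y)+V^{f^*}_{w\y}(u\y)$, and the same Young's inequality with weight $\beta=\sqrt{\mu L}$ combined with $\tfrac{1}{L}$-strong convexity of $f^*$ to handle the two cross terms. The only cosmetic difference is bookkeeping: the paper packages the cross-term bound directly as $\sqrt{L/\mu}\,(V^r_z(w)+V^r_w(u))$ and the $\nabla f^*$ term as $1\cdot(V^r_z(w)+V^r_w(u))$, whereas you split $\lam = 1+(\lam-1)$ and allocate the primal and dual pieces separately; both allocations arrive at the same $\lam=1+\sqrt{L/\mu}$.
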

\begin{proof}
	Consider three points $z = (z\x, z\y),\; w = (w\x, w\y),\; u = (u\x, u\y)$. By direct calculation,
\begin{equation}\label{eq:expand}\inprod{g(w) - g(z)}{w - u} = \inprod{w\y - z\y}{w\x - u\x} + \inprod{-w\x + z\x + \nabla f^*(w\y) - \nabla f^*(z\y)}{w\y - u\y}.\end{equation}
	By Cauchy-Schwarz and $L^{-1}$-strong convexity of $f^*$ (cf. Lemma~\ref{lem:scdual}) respectively, we have
	\begin{equation}
	\label{eq:accelcs}
	\begin{aligned}
	\inprod{w\y - z\y}{w\x - u\x} + \inprod{z\x - w\x}{w\y - u\y} \le \norm{w\y - z\y}_2 \norm{w\x - u\x}_2 + \norm{z\x - w\x}_2 \norm{w\y - u\y}_2\\
	\le \sqrt{\frac{L}{\mu}}\Par{\frac{\mu}{2}\norm{w\x - z\x}_2^2 + \frac{\mu}{2}\norm{w\x - u\x}_2^2 + \frac{1}{2L}\norm{w\y - z\y}_2^2 + \frac{1}{2L}\norm{w\y - u\y}_2^2} \\
	\le \sqrt{\frac{L}{\mu}}\Par{V^r_z(w) + V^r_w(u)}.
	\end{aligned}
	\end{equation}
	The second line used Young's inequality twice. Furthermore, by convexity of $f^*$ from $z\y$ to $u\y$, 
	\begin{equation}
	\label{eq:threeptconvex}
	\begin{aligned}
	\inprod{\nabla f^*(w\y) - \nabla f^*(z\y)}{w\y - u\y} \\
	= \inprod{\nabla f^*(z\y)}{u\y - z\y} - \inprod{\nabla f^*(w\y)}{u\y - w\y} - \inprod{\nabla f^*(z\y)}{w\y - z\y} \\
	\le f^*(u\y) - f^*(z\y) - \inprod{\nabla f^*(w\y)}{u\y - w\y} -  \inprod{\nabla f^*(z\y)}{w\y - z\y} \\
	= V^{f^*}_{z\y}(w\y) + V^{f^*}_{w\y}(u\y) \le V^r_z(w) + V^r_w(u).
	\end{aligned}
	\end{equation}
	The last inequality used separability of $r$ and nonnegativity of divergences. Summing the bounds \eqref{eq:accelcs} and \eqref{eq:threeptconvex} and recalling \eqref{eq:expand} yields the conclusion, where we use Definition~\ref{def:spc}.
\end{proof}

We also state a convenient fact about the form our iterates take.

\begin{lemma}\label{lem:iterateform}
In the setting of Lemma~\ref{lem:betterlambda}, let $z_t = (x_t, y_t)$, $w_t = (\hx, \hy)$ be iterates produced by running Algorithm~\ref{alg:mp} on the pair $g$, $r$. Suppose $y_0 = \nabla f(v_0)$ for some $v_0$. Then, $\hy$ and $y_{t + 1}$ can be recursively expressed as $\hy = \nabla f(\hv)$, $y_{t + 1} = \nabla f(v_{t + 1})$, for
\[\hv \gets v_t + \frac{1}{\lam}(x_t - v_t),\;  v_{t + 1} \gets v_t + \frac{1}{\lam}\Par{\hx - \hv}.\]
\end{lemma}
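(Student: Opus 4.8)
The plan is to unfold the two proximal steps of Algorithm~\ref{alg:mp} for the specific pair $(g, r)$ of Lemma~\ref{lem:betterlambda}, exploiting (a) the separable form of $r$ and (b) the fact that $\nabla f$ and $\nabla f^*$ are mutually inverse (valid since $f: \R^d \to \R$ is $L$-smooth and $\mu$-strongly convex), and then to induct on $t$ with the invariant $y_t = \nabla f(v_t)$.

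First I would record two elementary facts. Since $r(x, y) = \tfrac{\mu}{2}\norm{x}_2^2 + f^*(y)$, its gradient is $\nabla r(x, y) = (\mu x, \nabla f^*(y))$; and the first-order optimality condition for any unconstrained proximal step $w' = \Prox^r_z(h)$ is $\nabla r(w') = \nabla r(z) - h$ (the unconstrained case of Lemma~\ref{lem:proxprop}). Because both $\nabla r$ and $h = \tfrac{1}{\lam} g(\cdot)$ split across the $x$- and $y$-blocks, this optimality condition decouples into one equation per block, and only the $y$-block will matter for the claim.

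Next I would apply this to $w_t = (\hx, \hy) = \Prox^r_{z_t}(\tfrac{1}{\lam} g(z_t))$. With $g(z_t) = (y_t, \nabla f^*(y_t) - x_t)$, the $y$-block equation reads $\nabla f^*(\hy) = \bigl(1 - \tfrac{1}{\lam}\bigr)\nabla f^*(y_t) + \tfrac{1}{\lam} x_t$. Under the inductive hypothesis $\nabla f^*(y_t) = v_t$ the right side is exactly $v_t + \tfrac{1}{\lam}(x_t - v_t) = \hv$, so $\hy = \nabla f(\hv)$. Then I repeat the argument for $z_{t+1} = (\px, \py) = \Prox^r_{z_t}(\tfrac{1}{\lam} g(w_t))$: using the formula just derived, $g(w_t) = (\hy, \nabla f^*(\hy) - \hx) = (\hy, \hv - \hx)$, and the $y$-block equation gives $\nabla f^*(\py) = \nabla f^*(y_t) - \tfrac{1}{\lam}(\hv - \hx) = v_t + \tfrac{1}{\lam}(\hx - \hv) = v_{t+1}$, i.e.\ $\py = \nabla f(v_{t+1})$. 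This re-establishes the invariant at step $t+1$; the base case is the assumption $y_0 = \nabla f(v_0)$, so the claim follows by induction.

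There is no serious obstacle here — the statement is essentially a change of variables recording the mirror-prox iteration in the ``primal'' variable $v = \nabla f^*(y)$. The only points requiring care are invoking the inverse-function relationship $(\nabla f)^{-1} = \nabla f^*$ (which is exactly why the strong convexity and smoothness hypotheses of Lemma~\ref{lem:betterlambda} are present), and tracking which iterates feed into each update — note $\hv$ is built from $x_t, v_t$ while $v_{t+1}$ is built from $\hx, \hv, v_t$ — so that no closed form for the $x$-sequence is ever needed, only the values $x_t, \hx$ as produced by the algorithm.
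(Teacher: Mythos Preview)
Your proposal is correct and follows essentially the same approach as the paper: both argue by induction on the invariant $y_t = \nabla f(v_t)$, use the separability of $r$ to isolate the $y$-block of the proximal step, and invoke the inverse relationship $\nabla f^* = (\nabla f)^{-1}$. The only cosmetic difference is that the paper writes the $y$-block subproblem as an explicit $\argmax$ and cites the conjugate characterization (Lemma~\ref{lem:argmaxconj}), whereas you write the equivalent first-order optimality condition $\nabla r(w') = \nabla r(z) - h$ directly; the computations are identical.
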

\begin{proof}
We prove this inductively; consider some iteration $t$. Assuming $y_t = \nabla f(v_t)$, by definition
\begin{align*}\hy &= \argmin_y\Brace{\inprod{\frac{1}{\lam}\Par{\nabla f^*(y_t) - x_t}}{y} + V^{f^*}_{y_t}(y)} \\
&= \argmax_y \Brace{\inprod{\frac{1}{\lam}(x_t - v_t) + v_t}{y} - f^*(y)} = \nabla f\Par{v_t + \frac{1}{\lam}(x_t - v_t)}. \end{align*}
Here, we used standard facts about convex conjugates (see Lemma~\ref{lem:argmaxconj}). A similar argument shows that we can compute implicitly $y_{t + 1} = \nabla f(v_t + \tfrac{1}{\lam}(\hx - \hv))$.
\end{proof}
We now prove Theorem~\ref{thm:accel}, i.e.\ that we can halve function error in $O\Par{\sqrt{\frac L \mu}}$ iterations of Algorithm~\ref{alg:mp}. Simply iterating Theorem~\ref{thm:accel} yields a linear rate of convergence for smooth, strongly convex functions, yielding an $\eps$-approximate minimizer in $O\Par{\sqrt{\frac{L}{\mu}} \log \frac{f(x_0) - f(x^*)}{\eps}}$ iterations.
\begin{theorem}\label{thm:accel}
In the setting of Lemma~\ref{lem:betterlambda}, run $T \ge 4\lam$ iterations of Algorithm~\ref{alg:mp} initialized at $z_0 = (x_0, \nabla f(x_0))$ on the pair $g, r$ with $\lam = 1 + \sqrt{\tfrac{L}{\mu}}$, and define
\[\bar{v} = \frac{1}{T}\sum_{0 \le t < T} v_{t + \half} \text{ where } w_t = \Par{\hx, \nabla f (\hv) }.\] 
Then we have $f(\bar{v}) - f(x^*) \le \thalf(f(x_0) - f(x^*))$, where $x^*$ minimizes $f$.
\end{theorem}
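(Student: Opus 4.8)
The plan is to invoke Proposition~\ref{prop:mirrorprox} with $\lambda = 1 + \sqrt{L/\mu}$, which is a valid relative Lipschitzness parameter for the pair $(g, r)$ by Lemma~\ref{lem:betterlambda}, at the comparison point $u = (x^*, \nabla f(x^*))$, and then massage the resulting regret bound $\sum_{0 \le t < T}\inprod{g(w_t)}{w_t - u} \le \lambda V^r_{z_0}(u)$ into the claimed contraction. The key structural observation is that $x^*$ is the \emph{unconstrained} minimizer of $f$ over $\R^d$, so the dual coordinate of the optimal pair is $\nabla f(x^*) = 0$, which makes a stray term disappear.

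First I would expand a single regret term. By Lemma~\ref{lem:iterateform} (which applies since $z_0 = (x_0, \nabla f(x_0))$, i.e.\ $y_0 = \nabla f(v_0)$ with $v_0 = x_0$), the iterate $w_t$ has $\hy = \nabla f(\hv)$, hence $\nabla f^*(\hy) = \hv$ and $g(w_t) = (\hy, \hv - \hx)$. A one-line computation then gives $\inprod{g(w_t)}{w_t - u} = \inprod{\hy}{\hv - u\x} - \inprod{\hv - \hx}{u\y}$. Plugging $u\y = \nabla f(x^*) = 0$ annihilates the second term and leaves $\inprod{\nabla f(\hv)}{\hv - x^*} \ge f(\hv) - f(x^*)$ by convexity of $f$. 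Summing over $t$ and using Jensen's inequality with $\bar v = \tfrac1T\sum_t \hv$ shows the left-hand side of the regret bound is at least $T\left(f(\bar v) - f(x^*)\right)$.

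For the right-hand side, separability of $r(x, y) = \tfrac{\mu}{2}\norm{x}_2^2 + f^*(y)$ gives $V^r_{z_0}(u) = \tfrac{\mu}{2}\norm{x^* - x_0}_2^2 + V^{f^*}_{\nabla f(x_0)}(0)$. I would then use the conjugate-divergence identity $V^{f^*}_{\nabla f(a)}(\nabla f(b)) = V^f_b(a)$ (standard; see Appendix~\ref{appendix:convex_analysis}) with $a = x_0$, $b = x^*$ to rewrite $V^{f^*}_{\nabla f(x_0)}(0) = V^f_{x^*}(x_0) = f(x_0) - f(x^*)$, and $\mu$-strong convexity together with $\nabla f(x^*) = 0$ to bound $\tfrac{\mu}{2}\norm{x^* - x_0}_2^2 \le f(x_0) - f(x^*)$; hence $V^r_{z_0}(u) \le 2\left(f(x_0) - f(x^*)\right)$. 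Combining the two bounds yields $f(\bar v) - f(x^*) \le \tfrac{2\lambda}{T}\left(f(x_0) - f(x^*)\right) \le \thalf\left(f(x_0) - f(x^*)\right)$ once $T \ge 4\lambda$. I do not expect a genuine obstacle here: all of the difficulty has been front-loaded into Lemma~\ref{lem:betterlambda}, where obtaining the sharp $1 + \sqrt{L/\mu}$ constant (instead of the $L/\mu$ that the black-box Lemma~\ref{lem:basiclam} would give) is precisely what upgrades the $O(T^{-1})$ regret to acceleration. The only places to be careful are the choice of comparison point — so that the dual term vanishes — and the bookkeeping identifying $V^r_{z_0}(u)$ with $f(x_0) - f(x^*)$, which is the step most prone to a stray factor.
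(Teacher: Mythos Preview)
The proposal is correct and follows essentially the same approach as the paper: apply Proposition~\ref{prop:mirrorprox} at $u = (x^*, \nabla f(x^*))$, use $\nabla f(x^*) = 0$ to reduce each regret term to $\langle \nabla f(\hv), \hv - x^*\rangle \ge f(\hv) - f(x^*)$, and bound $V^r_{z_0}(u)$ via Lemma~\ref{lem:bregofdual} and strong convexity. Your algebraic simplification of $\langle g(w_t), w_t - u\rangle$ to $\langle \hy, \hv - u\x\rangle - \langle \hv - \hx, u\y\rangle$ is just a repackaging of the paper's two-term expansion $\langle \nabla f(\hv), \hx - x^*\rangle + \langle \hv - \hx, \nabla f(\hv)\rangle$, and both collapse to the same quantity once $u\y = 0$.
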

\begin{proof}
First, we remark that this form of $w_t$ follows from Lemma~\ref{lem:iterateform}, and correctness of $\lam$ follows from Lemma~\ref{lem:betterlambda}. By an application of Proposition~\ref{prop:mirrorprox}, letting $u = (x^*, \nabla f(x^*))$,
\begin{align*}\frac{1}{T}\sum_{0 \le t < T} \inprod{g(w_t)}{w_t - u} &\le \frac{\lam}{T} \cdot V^r_{z_0}(u) \le \frac{1}{4}\Par{\frac{\mu}{2}\norm{x_0 - x^*}_2^2 + V^{f^*}_{\nabla f(x_0)}(\nabla f(x^*))}\\
&= \frac{1}{4} \Par{\frac{\mu}{2}\norm{x_0 - x^*}_2^2 + f(x_0) - f(x^*)} \le \half \Par{f(x_0) - f(x^*)}.\end{align*}
The second line used the definition of divergence in $f^*$ (see Lemma~\ref{lem:bregofdual}) and strong convexity of $f$, which implies $f(x_0) \ge f(x^*) + \tfrac{\mu}{2}\norm{x_0 - x^*}_2^2$. Moreover, by the definition of $g$ and $\nabla f(x^*) = 0$,
\begin{align*}\frac{1}{T}\sum_{0 \le t < T} \inprod{g(w_t)}{w_t - u} &= \frac{1}{T}\sum_{0 \le t < T} \inprod{\nabla f(\hv)}{\hx - x^*} + \inprod{\hv - \hx}{\nabla f(\hv) }\\
&\ge \frac{1}{T}\sum_{0 \le t < T} f(\hv) - f(x^*) \ge f(\bar{v}) - f(x^*).\end{align*}
The last line used convexity twice. Combining these two derivations yields the conclusion.
\end{proof}

For convenience, we state the full algorithm of Theorem~\ref{thm:accel} as Algorithm~\ref{alg:accel_basic}.

\begin{algorithm}
	\caption{$\textsc{EG-Accel}(x_0, \epsilon)$: Extragradient accelerated smooth minimization}
	\begin{algorithmic}
		\label{alg:accel_basic}
		\STATE \textbf{Input:} $x_0 \in \R^d$, $f$ $L$-smooth and $\mu$-strongly convex in $\norm{\cdot}_2$, and $\eps_0 \ge f(x_0) - f(x^*)$
		\STATE \textbf{Output:} $\eps$-approximate minimizer of $f$
		\STATE $\lam \gets 1 + \sqrt{L/\mu}$, $x^{(0)} \gets x_0$, $T \gets 4\lceil\lam\rceil$, $K \gets \lceil\log_2\tfrac{\eps_0}{\eps}\rceil$
		\FOR {$0 \le k < K$}
		\STATE $x_0 \gets x^{(k)}$, $v_0 \gets x_0$
		\FOR {$0 \le t < T$}
		\STATE $\hx \gets x_t - \tfrac{1}{\mu\lam}\nabla f(v_t)$ and $\hv \gets v_t + \tfrac{1}{\lam}(x_t - v_t)$
		\STATE $\px \gets x_t - \tfrac{1}{\mu\lam}\nabla f(\hv)$ and $\pv \gets v_t + \tfrac{1}{\lam}(\hx - \hv)$ 
		\ENDFOR
		\STATE $x^{(k + 1)} \gets \frac{1}{T}\sum_{0\le t < T}\hv$
		\ENDFOR
		\RETURN $x^{(K)}$
	\end{algorithmic}
\end{algorithm}

In Appendix~\ref{appendix:generalize}, we give an alternative proof of acceleration leveraging relative Lipschitzness, as well as a variant of extragradient methods suited for strongly monotone operators (cf.\ Appendix~\ref{appendix:dual_extrapolation}), by applying these tools to the saddle point problem (to be contrasted with \eqref{eq:pdobj})
\[\min_{x \in \R^d} f(x) = \min_{x \in \R^d} \max_{y \in \R^d} \frac{\mu}{2}\norm{x}_2^2 + \inprod{y}{x} - h^*(y),\text{ where } h(x) \defeq f(x) - \frac{\mu}{2}\norm{x}_2^2.\] 
This alternative proof strategy readily generalizes the accelerated rate of  Theorem~\ref{thm:accel} to general norms. While the rates attained in Appendix~\ref{appendix:generalize} are slightly less sharp (losing a $\tfrac L \mu$ factor in the logarithm) when compared to Theorem~\ref{thm:accel}, the analysis is arguably simpler. This is in the sense that Appendix~\ref{appendix:generalize} shows a potential function decreases at a linear rate in every iteration, rather than requiring $O(\sqrt{L/\mu})$ iterations to halve it.
\section{Area convexity rates for box-simplex games via relative Lipschitzness}
\label{sec:areaconvex}

In this section, we show that a local variant of Definition~\ref{def:spc} recovers the improved convergence rate achieved by \cite{Sherman17} for box-constrained $\ell_\infty$-regression, and more generally box-simplex bilinear games. Specifically, we will use the following result, a simple extension to Proposition~\ref{prop:mirrorprox} which states that relative Lipschitzness only must hold with respect to triples of algorithm iterates.

\begin{corollary}\label{corr:localrl}
Suppose Algorithm~\ref{alg:mp} is run with a monotone operator $g$ and a distance generating $r$ satisfying, for all iterations $t$,
\begin{equation}\label{eq:localrl}\inprod{g(w_t) - g(z_t)}{w_t - z_{t + 1}} \le \lam\Par{V_{z_t}^r(w_t) + V_{w_t}(z_{t + 1})}.\end{equation}
Then, the conclusion of Proposition~\ref{prop:mirrorprox} holds.
\end{corollary}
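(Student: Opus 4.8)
The plan is to observe that the proof of \Cref{prop:mirrorprox} uses the relative Lipschitzness hypothesis of \Cref{def:spc} in exactly one place, and only for the single triple of points $(z,w,u) = (z_t, w_t, z_{t+1})$. Since the assumed inequality \eqref{eq:localrl} asserts precisely the needed bound for that triple at every iteration, the weaker hypothesis suffices to carry the original argument through unchanged.

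Concretely, I would re-run the proof of \Cref{prop:mirrorprox} verbatim. The first-order optimality conditions \eqref{eq:foopt} for the proximal steps $w_t = \Prox^r_{z_t}(\tfrac1\lam g(z_t))$ and $z_{t+1} = \Prox^r_{z_t}(\tfrac1\lam g(w_t))$ (via \Cref{lem:proxprop}) hold unconditionally, depending only on the definition of the iterates and not on any relationship between $g$ and $r$; moreover the second of them may be tested against an arbitrary $u \in \zset$. Adding the two inequalities and rearranging using the identity $\inprod{g(w_t)}{w_t - u} = \inprod{g(w_t)}{w_t - z_{t+1}} + \inprod{g(w_t)}{z_{t+1} - u}$ produces exactly the first line of \eqref{eq:telescope}. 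To pass to the second line --- i.e.\ to discard the trailing terms $\tfrac1\lam\inprod{g(w_t) - g(z_t)}{w_t - z_{t+1}} - V^r_{w_t}(z_{t+1}) - V^r_{z_t}(w_t)$ --- the original proof invoked \Cref{def:spc}; but this discarding is valid precisely when $\tfrac1\lam\inprod{g(w_t) - g(z_t)}{w_t - z_{t+1}} \le V^r_{z_t}(w_t) + V^r_{w_t}(z_{t+1})$, which is exactly \eqref{eq:localrl} divided by $\lam$. Hence $\tfrac1\lam\inprod{g(w_t)}{w_t - u} \le V^r_{z_t}(u) - V^r_{z_{t+1}}(u)$ for every $u \in \zset$, and summing over $0 \le t < T$ telescopes to the claimed bound $\sum_{0 \le t < T}\inprod{g(w_t)}{w_t - u} \le \lam V^r_{z_0}(u)$.

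I expect no substantive obstacle; the only point worth verifying is that no other step in the proof of \Cref{prop:mirrorprox} secretly relies on the global three-point condition. It does not: that proof never even uses monotonicity of $g$, relying only on the generic nonnegativity and convexity of Bregman divergences together with the prox optimality conditions. (I would also read the corollary's right-hand side as $V^r_{w_t}(z_{t+1})$, matching the paper's Bregman-divergence notation.)
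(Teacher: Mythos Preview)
Your proposal is correct and matches the paper's own proof essentially verbatim: the paper simply observes that the only application of relative Lipschitzness in the proof of \Cref{prop:mirrorprox} is the inequality in \eqref{eq:telescope}, which is exactly \eqref{eq:localrl}, so the conclusion still holds. Your more detailed re-running of the argument is a faithful elaboration of the same one-line observation.
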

\begin{proof}
Observe that the only applications of relative Lipschitzness in the proof of Proposition~\ref{prop:mirrorprox} are of the form \eqref{eq:localrl} (namely, in \eqref{eq:telescope}). Thus, the same conclusion still holds.
\end{proof}

 We use Corollary~\ref{corr:localrl} to give an alternative algorithm and analysis recovering the rates implied by the use of area convexity in \cite{Sherman17}, for box-simplex games, which we now define.
 
 \begin{problem}[Box-simplex game]\label{prob:boxlinf}
 Let $A \in \R^{m \times n}$ be a matrix and let $b \in \R^m$, $c \in \R^n$ be vectors. The associated box-simplex game, and its induced monotone operator $g$, are
 \begin{equation}\label{eq:boxsimp}\min_{x \in [-1, 1]^n} \max_{y \in \Delta^m} f(x,y) \defeq y^\top Ax - \inprod{b}{y} + \inprod{c}{x},\; g(x, y) \defeq \Par{A^\top y + c, b - Ax}.\end{equation}
 Here, $\Delta^m \defeq \{y \in \R^m_{\geq 0} : \sum_{i \in [m]} y_i = 1 \}$ is the nonnegative probability simplex in $m$ dimensions.
 \end{problem}

By a simple reduction that at most doubles the size of the input (stacking $A$, $b$ with negated copies, cf.\ Section 3.1 of \cite{SidfordT18}), Problem~\ref{prob:boxlinf} is a generalization of the box-constrained $\ell_\infty$-regression problem
\[\min_{x \in [-1, 1]^m} \norm{Ax - b}_\infty.\]
The work of \cite{Sherman17}  proposed a variant of extragradient algorithms, based on taking primal-dual proximal steps in the following regularizer\footnote{We let $\norm{A}_{\infty \rightarrow \infty} \defeq \sup_{\norm{x}_\infty = 1} \norm{Ax}_\infty$, i.e.\ the $\ell_\infty$ operator norm of $A$ or max $\ell_1$ norm of any row.}:
\begin{equation}
\label{eq:shermanreg}
r(x, y) \defeq y^\top |A| (x^2) + 10 \norm{A}_{\infty \rightarrow \infty}\sum_{i \in [m]} y_i \log y_i.
\end{equation}
Here, $|A|$ is the entrywise absolute value of $A$. The convergence rate of this algorithm was proven in \cite{Sherman17} via an analysis based on ``area convexity'' of the pair $(g, r)$, which required a somewhat sophisticated proof based on solving a partial differential equation over a triangle. We now show that the same rate can be obtained by the extragradient algorithms of \cite{Nemirovski04, Nesterov07}, and analyzed via local relative Lipschitzness \eqref{eq:localrl}\footnote{Although our analysis suffices to recover the rate of \cite{Sherman17} for $\ell_\infty$ regression, the analysis of \cite{Sherman17} is in some sense more robust (and possibly) more broadly applicable than ours, as it does not need to reason directly about how much the iterates vary in a step. Understanding or closing this gap is an interesting open problem.}. We first make the following simplication without loss of generality.

\begin{lemma}\label{lem:preprocess}
For all  $x \in [-1,1]^n$ the value of $\max_{y \in \Delta^m} f(x,y)$ in \eqref{eq:boxsimp} is unchanged if we remove all coordinates of $b$ with $b_i \ge \min_{i^* \in [m]} b_{i^*} + 2\norm{A}_{\infty \rightarrow \infty}$, and the corresponding rows of $A$. Therefore, in designing an algorithm to solve \eqref{eq:boxsimp} to additive error with linear pre-processing it suffices to assume that $b_i \in [0, 2\norm{A}_{\infty \rightarrow \infty}]$ for all $i \in [m]$.
\end{lemma}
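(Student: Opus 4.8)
The plan is to exploit that, for any fixed $x \in [-1,1]^n$, the inner objective $f(x,\cdot)$ in \eqref{eq:boxsimp} is linear over $\Delta^m$, so its maximum is attained at a vertex and equals $\inprod{c}{x} + \max_{i\in[m]}\Par{(Ax)_i - b_i}$. Thus the first claim reduces to showing that deleting the flagged rows does not change $\max_{i\in[m]}\Par{(Ax)_i - b_i}$ for any such $x$.

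The key quantitative input is the uniform bound $\norm{Ax}_\infty \le \norm{A}_{\infty \rightarrow \infty}$, valid for all $x$ with $\norm{x}_\infty \le 1$, so that $(Ax)_i \in [-\norm{A}_{\infty \rightarrow \infty}, \norm{A}_{\infty \rightarrow \infty}]$ for every $i$. Let $i_0$ attain $\min_{i^*\in[m]} b_{i^*}$; this index is never deleted, since $b_{i_0} < b_{i_0} + 2\norm{A}_{\infty \rightarrow \infty}$ (assuming $\norm{A}_{\infty \rightarrow \infty} > 0$; the degenerate case $A = 0$ is trivial). For any deleted index $i$, i.e.\ one with $b_i \ge b_{i_0} + 2\norm{A}_{\infty \rightarrow \infty}$, I would chain these bounds to obtain $(Ax)_i - b_i \le \norm{A}_{\infty \rightarrow \infty} - b_i \le -\norm{A}_{\infty \rightarrow \infty} - b_{i_0} \le (Ax)_{i_0} - b_{i_0}$, so a deleted coordinate is always dominated by the retained coordinate $i_0$ and hence irrelevant to the maximum. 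Since $x$ was arbitrary, this establishes the first sentence of the lemma.

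For the ``therefore'' statement, after the deletion every remaining coordinate satisfies $b_i - b_{i_0} \in [0, 2\norm{A}_{\infty \rightarrow \infty})$; I would then translate $b \mapsto b - b_{i_0}\mathbf{1}$, observing that because $\inprod{\mathbf{1}}{y} = 1$ on $\Delta^m$ this only shifts $f(x,y)$ (hence the minimax value) by the explicit scalar $b_{i_0}$ computable from the input, which can be added back after solving. This normalization places all entries of the new $b$ in $[0, 2\norm{A}_{\infty \rightarrow \infty}]$, and both the row deletion and the shift are a single linear-time pre-processing pass. There is essentially no serious obstacle here; the only points needing a bit of care are confirming that the minimizing index $i_0$ survives the deletion (so the maximum is over a nonempty set retaining its old value) and tracking the sign of the additive constant in the shift of $b$.
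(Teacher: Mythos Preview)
Your proposal is correct and follows essentially the same argument as the paper's proof: both reduce $\max_{y\in\Delta^m} f(x,y)$ to $\inprod{c}{x}+\max_{i}\Par{(Ax)_i-b_i}$, use $|(Ax)_i|\le\norm{A}_{\infty\to\infty}$ to show any coordinate with $b_i\ge b_{i_0}+2\norm{A}_{\infty\to\infty}$ is dominated by $i_0$, and then shift $b$ by $b_{i_0}\mathbf{1}$ using $\inprod{\mathbf{1}}{y}=1$ on $\Delta^m$. If anything, your write-up is slightly more careful in explicitly noting that $i_0$ survives deletion and in tracking the additive constant.
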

\begin{proof}
For any $x \in [-1,1]^n$, letting $i^* \in \argmin_{i \in [m]} b_i$ we have 
\[
\max_{y \in \Delta^m} y^\top\Par{Ax - b} = \max_{i \in [m]} [A x - b]_{i}
\geq -\norm{A}_{\infty \rightarrow \infty} \norm{x}_\infty - \min_{i^* \in [m]} b_{i}
\geq  -\norm{A}_{\infty \rightarrow \infty}  - b_{i^*} ~.
\]	
However, $[Ax - b]_i \leq \norm{A}_{\infty \rightarrow \infty} - b_i$ for all $i \in [m]$. Consequently, any coordinate $i \in [m]$ that satisfies $b_i \ge b_{i^*} + 2\norm{A}_{\infty \rightarrow \infty}$ has $[Ax - b]_i \leq [Ax - b]_{i_*}$ and the value of $\max_{y \in \Delta^m} f(x,y)$ is unchanged if this entry of $b_i$ and the corresponding row of $A$ is removed. Further, note that $\inprod{y}{\mathbf{1}}$ is a constant for all $y \in \Delta^m$. Consequently, in linear time we can remove all the coordinates $i$ with  $b_i \ge \min_{i^* \in [m]} b_{i^*} + 2\norm{A}_{\infty \rightarrow \infty}$ and shift all the coordinates by an additive constant so that the minimum coordinate of a remaining $b_i$ is $0$ without affecting additive error of any $x$. 
\end{proof}

We now prove our main result regarding the use of mirror prox to solve box-simplex games, using the area convex regularizer analyzed (with a slightly different algorithm) in \cite{Sherman17}.

\begin{theorem}\label{thm:ac}
Assume the preprocessing of Lemma~\ref{lem:preprocess} so that $b\in [0, 2\norm{A}_{\infty \rightarrow \infty}]^m$. Consider running Algorithm~\ref{alg:mp} or Algorithm~\ref{alg:dualex} on the operator in \eqref{eq:boxsimp} with $\lam = 3$, using the regularizer in \eqref{eq:shermanreg}. The resulting iterates satisfy \eqref{eq:localrl}, and thus satisfy the conclusion of Proposition~\ref{prop:mirrorprox}. 
\end{theorem}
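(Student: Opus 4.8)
The plan is to verify the local relative Lipschitzness inequality \eqref{eq:localrl} directly for the box-simplex operator $g(x,y) = (A^\top y + c, b - Ax)$ against the regularizer $r$ in \eqref{eq:shermanreg}, with $\lam = 3$; then Corollary~\ref{corr:localrl} immediately gives the conclusion. The left-hand side of \eqref{eq:localrl} is, writing $z_t = (x_t,y_t)$, $w_t = (\hat x, \hat y)$, $z_{t+1} = (x_{t+1}, y_{t+1})$,
\[
\inprod{g(w_t) - g(z_t)}{w_t - z_{t+1}} = \inprod{A^\top(\hat y - y_t)}{\hat x - x_{t+1}} + \inprod{-A(\hat x - x_t)}{\hat y - y_{t+1}},
\]
since the constant terms $c$ and $b$ cancel in the difference $g(w_t) - g(z_t)$. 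Reindexing one of the two inner products, this is a sum of bilinear ``cross terms'' of the form $\inprod{A \delta x}{\delta y}$ where $\delta x$ and $\delta y$ are differences among the iterates $\{x_t, \hat x, x_{t+1}\}$ and $\{y_t, \hat y, y_{t+1}\}$. The strategy is to bound each such cross term by a weighted combination of (i) a quadratic term $\sum_{i} (|A| w)_i (\delta x)_i^2 / s$ coming from the ``box'' part of the divergence $V^r$, and (ii) an entropy/KL term of the dual differences coming from the $10\norm{A}_{\infty\to\infty} \sum_i y_i \log y_i$ part of $r$, via a Cauchy--Schwarz step: $|\inprod{A\delta x}{\delta y}| \le \sum_{i,j} |A_{ij}| \, |\delta y_i| \, |\delta x_j| \le \sum_i (|A|(\delta x)^2)_i \cdot (\text{something}) + (\text{something}) \cdot \sum_i |A_{ij}| |\delta y_i|^2 / |\delta x_j|$-type splitting, chosen so the two pieces land inside $V^r_{z_t}(w_t)$ and $V^r_{w_t}(z_{t+1})$ respectively.

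The two key ingredients I would isolate as lemmas are: first, an explicit lower bound on the Bregman divergence of $r$ — namely $V^r_{(x,y)}(x',y') \ge \sum_i (|A| (x - x')^2)_i \cdot y_i' + 10\norm{A}_{\infty\to\infty} \cdot \mathrm{KL}(y' \| y)$ or a similar clean expression (the quadratic part handles the box block and the entropy part handles the simplex block, with the cross terms from differentiating $y^\top |A| x^2$ in $y$ being controllable); and second, \emph{local stability of the iterates} — a bound showing that consecutive iterates $y_t, \hat y, y_{t+1}$ (and their $|A|$-weighted box analogues) are multiplicatively close, e.g.\ $\hat y_i = \Theta(1) \cdot y_{t,i}$ coordinatewise, so that in the Cauchy--Schwarz split one can freely exchange a divergence measured ``from $w_t$'' for one measured ``from $z_t$'' up to constant factors. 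This local stability is what lets the $\lam = 3$ on the right-hand side absorb the constants generated by the argument, and it is where the preprocessing hypothesis $b \in [0, 2\norm{A}_{\infty\to\infty}]^m$ enters: it bounds the magnitude of $g$ relative to $\norm{A}_{\infty\to\infty}$, hence bounds how far a proximal step in $r$ can move, giving the needed multiplicative control on the simplex block (whose regularization weight is exactly $10\norm{A}_{\infty\to\infty}$).

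I expect the main obstacle to be establishing the local stability of the simplex iterates with explicit enough constants. One must show that a single mirror-prox step — i.e.\ $w_t = \Prox^r_{z_t}(\tfrac13 g(z_t))$ — cannot change the entropy-regularized dual variable by more than a constant multiplicative factor per coordinate, which amounts to analyzing the first-order optimality conditions of the prox step, $\nabla r(w_t) = \nabla r(z_t) - \tfrac13 g(z_t)$, and showing the perturbation to $\nabla r$ (dominated by $\|A^\top \hat y + c\|$ and $\|b - A\hat x\|$, each $O(\norm{A}_{\infty\to\infty})$ under preprocessing) is small compared to the entropy weight $10\norm{A}_{\infty\to\infty}$. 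Once that multiplicative closeness is in hand, the remainder is the Cauchy--Schwarz bookkeeping sketched above, choosing the Young's-inequality weights so that the four resulting quadratic/KL pieces sum to at most $3(V^r_{z_t}(w_t) + V^r_{w_t}(z_{t+1}))$; this should be routine once the stability constants are pinned down, and the whole argument is ``straightforward and based on a simple Cauchy--Schwarz argument after demonstrating local stability of iterates'' as advertised in the introduction.
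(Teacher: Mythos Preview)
Your proposal is correct and follows essentially the same approach as the paper: establish coordinatewise multiplicative stability of the simplex iterates $w_t^y, z_{t+1}^y \in [\tfrac12 z_t^y, 2z_t^y]$ via the prox optimality conditions (this is exactly where the preprocessing bound on $b$ and the entropy weight $10\norm{A}_{\infty\to\infty}$ enter), then use that stability together with a Young/Cauchy--Schwarz split of the bilinear cross terms to land inside $\lam(V^r_{z_t}(w_t)+V^r_{w_t}(z_{t+1}))$. The only cosmetic difference is that the paper obtains the divergence lower bound via the Hessian estimate $\nabla^2 r \succeq \diag{|A|^\top y,\ \norm{A}_{\infty\to\infty}\diag{1/y}}$ (cited from \cite{Sherman17,JambulapatiST19}) and the integral representation $V^r_z(w)=\int_0^1\!\!\int_0^\alpha \norm{w-z}^2_{\nabla^2 r(z_\beta)}d\beta d\alpha$, which yields a quadratic $\sum_i (\delta y_i)^2/y_i$ term rather than the KL term you wrote; either route works once stability lets you pin all weights to a common base point.
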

\begin{proof}
Fix a particular iteration $t$. We first claim that the simplex variables $w_t^y$ and $z_{t + 1}^y$ obey the following multiplicative stability property: entrywise,
\begin{equation}\label{eq:multstable} w_t^y, z_{t + 1}^y \in \Brack{\half z_t^y, 2 z_t^y}.\end{equation}
We will give the proof for $w_t^y$ as the proof for $z_{t + 1}^y$ follows from the same reasoning. Recall that
\[w_t = \argmin_{w \in \Delta^n \times [-1, 1]^m} \Brace{ \inprod{\frac{1}{\lam} g(z_t)}{w} + V^r_{z_t}(w)},\]
and therefore, defining $\Par{x}^2$ and $\Par{z_t^x}^2$ as the entrywise square of these vectors,
\begin{align*}w_t^y = \argmin_{y \in \Delta^m} \inprod{\gamma_t^y}{y} +  10\norm{A}_{\infty \rightarrow \infty}\sum_{i \in [m]} y_i \log \frac{y_i}{[z_t^y]_i} 
\text{where } \gamma_t^y \defeq \frac{1}{\lam}\Par{b - Az_t^x} + |A|
\left[
\Par{x}^2 - \Par{z_t^x}^2
\right]. 
\end{align*}
Consequently, applying $\log$ and $\exp$ entrywise we have
\[
w_t^y \propto \exp\Par{\log z_t^y - \frac{1}{10\norm{A}_{\infty \rightarrow \infty}}\gamma_t^y} ~.
\]
This implies the desired \eqref{eq:multstable}, where we use that $\norm{\gamma_t^y}_\infty \le 3\norm{A}_{\infty \rightarrow \infty}$, and $\exp(0.6) \le 2$. Next, we have by a straightforward calculation (Lemma 3.4, \cite{Sherman17} or Lemma 6, \cite{JambulapatiST19}) that 
\begin{equation}\label{eq:diagapprox}\nabla^2 r(x, y) \succeq \begin{pmatrix} \diag{|A_{:j}|^\top y} & 0 \\ 0 & \norm{A}_{\infty \rightarrow \infty}\diag{\frac{1}{y_i}}\end{pmatrix}.\end{equation}
By expanding the definition of Bregman divergence, we have
\[V_{z_t}^r(w_t) = \int_0^1 \int_0^\alpha \norm{w_t - z_t}^2_{\nabla^2 r(z_t + \beta (w_t - z_t))} d\beta d\alpha.\]
Fix some $\beta \in [0, 1]$, and let $z_\beta \defeq z_t + \beta (w_t - z_t)$. Since the coordinates of $z_\beta$ also satisfy the stability property \eqref{eq:multstable}, by the lower bound of \eqref{eq:diagapprox}, we have
\begin{align*}
\norm{w_t - z_t}^2_{\nabla^2 r(z_\beta)} &\ge \sum_{i \in [m], j \in [n]} |A_{ij}| \Par{[z_\beta^y]_i\Brack{w_t^x - z_t^x}_j^2 + \frac{1}{[z_\beta^y]_i}\Brack{w_t^y - z_t^y}_i^2} \\
&\ge \half \sum_{i \in [m], j \in [n]} |A_{ij}| \Par{[z_t^y]_i\Brack{w_t^x - z_t^x}_j^2 + \frac{1}{[z_t^y]_i}\Brack{w_t^y - z_t^y}_i^2}.
\end{align*}
By using a similar calculation to lower bound $V_{w_t}^r(z_{t + 1})$, we have by Young's inequality the desired
\begin{align*}
V_{z_t}^r(w_t) + V_{w_t}^r(z_{t + 1}) &\ge \frac 1 4 \sum_{i \in [m], j \in [n]} |A_{ij}| \Par{[z_t^y]_i\Brack{w_t^x - z_t^x}_j^2 + \frac{1}{[z_t^y]_i}\Brack{w_t^y - z_t^y}_i^2} \\
&+ \frac 1 4 \sum_{i \in [m], j \in [n]} |A_{ij}| \Par{[z_t^y]_i\Brack{w_t^x - z_{t + 1}^x}_j^2 + \frac{1}{[z_t^y]_i}\Brack{w_t^y - z_{t + 1}^y}_i^2} \\
&\ge \frac 1 3 \sum_{i \in [m], j \in [n]} A_{ij} \Par{\Brack{w_t^y - z_t^y}_i\Brack{w_t^x - z_{t + 1}^x}_j - \Brack{w_t^y - z_{t + 1}^y}_i\Brack{w_t^x - z_t^x}_j} \\
&= \frac{1}{\lam} \inprod{g(w_t) - g(z_t)}{w_t - z_{t + 1}}.
\end{align*}
\end{proof}

The range of the regularizer $r$ is bounded by $O(\norm{A}_{\infty \rightarrow \infty} \log m)$, and hence the iteration complexity to find an $\eps$ additively-approximate solution to the box-simplex game is $O(\tfrac{\norm{A}_{\infty \rightarrow \infty} \log m}{\eps})$. Finally, we comment that the iteration complexity of solving the subproblems required by extragradient methods in the regularizer $r$ to sufficiently high accuracy is logarithmically bounded in problem parameters via a simple alternating minimization scheme proposed by \cite{Sherman17}. Here, we note that the error guarantee e.g.\ Proposition~\ref{prop:mirrorprox} is robust up to constant factors to solving each subproblem to $\eps$ additive accuracy, and appropriately using approximate optimality conditions (for an example of this straightforward extension, see Corollary 1 of \cite{JambulapatiST19}). %
\section{Randomized coordinate acceleration via expected relative Lipschitzness}
\label{sec:coordinate}

We show relative Lipschitzness can compose with randomization. Specifically, we adapt Algorithm~\ref{alg:accel_basic} to coordinate smoothness, recovering the accelerated rate first obtained in \cite{ZhuQRY16, NesterovS17}. We recall $f$ is $L_i$-coordinate-smooth if its coordinate restriction is smooth, i.e.\ $|\nabla_i f(x + ce_i) - \nabla_i f(x)| \leq L_i |c|$ $\forall x \in \xset,\; c \in \mathbb{R}$; for twice-differentiable coordinate smooth $f$, $\nabla^2_{ii} f(x) \leq L_i$.

Along the way, we build a framework for randomized extragradient methods via ``local variance reduction" in Proposition~\ref{prop:generalmp}. In particular, we demonstrate how for separable domains our technique can yield $O(T^{-1})$ rates for stochastic extragradient algorithms, bypassing a variance barrier encountered by prior methods \cite{JuditskyNT11}. Throughout, let
$f: \R^d \rightarrow \R$ be $L_i$-smooth in coordinate $i$, and $\mu$-strongly convex in $\norm{\cdot}_2$, and define the distance generating function $r(x, y) = \tfrac{\mu}{2}\norm{x}_2^2 + f^*(y)$.

Our approach modifies that of Section~\ref{sec:acceleration} in the following ways. First, our iterates are defined via stochastic estimators which ``share randomness'' (use the same coordinate in both updates). Concretely, fix some iterate $z_t = (x_t, \nabla f(v_t))$. For a distribution $\{p_i\}_{i \in [d]}$, sample $i \sim p_i$ and let
\begin{equation}
\label{eq:coordmp}
\begin{aligned}
&g_i(z_t) \defeq \left(\frac{1}{p_i} \nabla_i f(v_t), v_t - x_t \right),\;\wi = \left(\hxi, \nabla f(\hvi)\right) \defeq \textrm{Prox}^r_{z_t}\left(\frac{1}{\lam} g_i(z_t)\right), \\
&g_i(\wi) \defeq \left(\frac{1}{p_i} \nabla_i f(\hvi), \hvi - \left(x_t + \frac{1}{p_i} \di\right) \right) \text{ for } \di \defeq \hxi - x_t, \\
&\pzi = \left(\pxi, \nabla f(\pvi)\right) \defeq \textrm{Prox}^r_{z_t}\left(\frac{1}{\lam} g_i(\wi)\right).
\end{aligned}
\end{equation}
By observation, $g_i(z_t)$ is unbiased for $g(z_t)$; however, the same cannot be said for $g_i(\wi)$, as the random coordinate was used in the definition of $\wi$. Nonetheless, examining the proof of Proposition~\ref{prop:mirrorprox}, we see that the conclusion
\[
\inprod{g(\bar{w}_t)}{\bar{w}_t - u} \le V_{z_t}^r(u) - \E\left[V^r_{\pzi}(u)\right]
\]
still holds for some point $\bar{w}_t$, as long as
\begin{equation}\label{eq:randomconds}\begin{aligned}\E\left[\inprod{g_i(\wi)}{\wi - u}\right] &= \inprod{g(\bar{w}_t)}{\bar{w}_t - u},\\
\E\left[\inprod{g_i(\wi) - g_i\left(z_t\right)}{\wi - \pzi}\right]&\le \lam\E\left[V^r_{z_t}(\wi) + V^r_{\wi}(\pzi)\right].\end{aligned}\end{equation}
We make this concrete in the following claim, a generalization of Proposition~\ref{prop:mirrorprox} which handles randomized operator estimates as well as an expected variant of relative Lipschitzness. We remark that as in Corollary~\ref{corr:localrl}, the second condition in \eqref{eq:randomconds} only requires relative Lipschitzness to hold for the iterates of the algorithm, rather than globally.
\begin{proposition}\label{prop:generalmp}
Suppose in every iteration of Algorithm~\ref{alg:mp}, steps are conducted with respect to randomized gradient operators $\Brace{g_i(z_t), g_i(w_t^{(i)})}$ satisfying \eqref{eq:randomconds} for some $\{\bar{w}_t\}$. Then, for all $u \in \zset$,
\[\E\Brack{\sum_{0 \le t < T} \inprod{g\Par{\bar{w}_t}}{\bar{w}_t - u}} \le \lam V_{z_0}^r(u).\]
\end{proposition}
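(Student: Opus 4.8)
The plan is to replay the proof of Proposition~\ref{prop:mirrorprox} with the deterministic operator values replaced by the random estimators $g_i(z_t)$ and $g_i(w_t^{(i)})$, carrying a conditional expectation over the coordinate $i$ sampled in iteration $t$. First I would fix an iteration $t$ and condition on all randomness used in iterations $0,\dots,t-1$, so that $z_t$ (and hence $\bar w_t$) is deterministic. Applying the first-order optimality conditions for $w_t^{(i)} = \Prox^r_{z_t}(\tfrac{1}{\lam}g_i(z_t))$ and $z_{t+1}^{(i)} = \Prox^r_{z_t}(\tfrac{1}{\lam}g_i(w_t^{(i)}))$ against $u$ (Lemma~\ref{lem:proxprop}) exactly as in \eqref{eq:foopt}, adding the two resulting inequalities, and rearranging exactly as in \eqref{eq:telescope}, I get for every value of $i$
\[\tfrac{1}{\lam}\inprod{g_i(w_t^{(i)})}{w_t^{(i)} - u} \le V^r_{z_t}(u) - V^r_{z_{t+1}^{(i)}}(u) + \tfrac{1}{\lam}\inprod{g_i(w_t^{(i)}) - g_i(z_t)}{w_t^{(i)} - z_{t+1}^{(i)}} - V^r_{w_t^{(i)}}(z_{t+1}^{(i)}) - V^r_{z_t}(w_t^{(i)}).\]

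Next I would take the conditional expectation over $i$. The second condition in \eqref{eq:randomconds} says precisely that the sum of the last three terms on the right has nonpositive expectation, so it drops, and the first condition in \eqref{eq:randomconds} rewrites $\E[\inprod{g_i(w_t^{(i)})}{w_t^{(i)} - u}]$ as $\inprod{g(\bar w_t)}{\bar w_t - u}$; this yields $\tfrac{1}{\lam}\inprod{g(\bar w_t)}{\bar w_t - u} \le V^r_{z_t}(u) - \E[V^r_{z_{t+1}^{(i)}}(u)]$ conditionally on the first $t$ iterations. Taking the full expectation by the tower rule, and using that the realized next iterate satisfies $z_{t+1} = z_{t+1}^{(i)}$, I get $\tfrac{1}{\lam}\E[\inprod{g(\bar w_t)}{\bar w_t - u}] \le \E[V^r_{z_t}(u)] - \E[V^r_{z_{t+1}}(u)]$. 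Summing over $0 \le t < T$ telescopes the divergences; since $z_0$ is deterministic and $V^r \ge 0$, the right-hand side is at most $V^r_{z_0}(u)$, and multiplying through by $\lam$ gives the claim.

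The only genuinely new ingredient relative to Proposition~\ref{prop:mirrorprox} is the filtration bookkeeping, so that is where I expect the care to be needed: the two hypotheses in \eqref{eq:randomconds} must be read as statements about the conditional distribution of $i$ given $z_t$, and one must track that $\bar w_t$ is $z_t$-measurable while the comparison point $u$ is fixed and hence never interacts with the randomness. Once this is set up, every algebraic manipulation — the optimality inequalities, the cancellation of the cross term against $V^r_{w_t^{(i)}}(z_{t+1}^{(i)}) + V^r_{z_t}(w_t^{(i)})$, and the telescoping — is identical to the deterministic argument, and no new estimates are required.
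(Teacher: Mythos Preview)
Your proposal is correct and follows exactly the approach the paper takes: its own proof is a one-liner stating that the argument of Proposition~\ref{prop:mirrorprox} goes through verbatim once one takes expectations over \eqref{eq:telescope} and invokes the two conditions in \eqref{eq:randomconds}. Your write-up just makes the filtration bookkeeping explicit, which the paper leaves implicit.
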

\begin{proof}
The proof follows identically to that of Proposition~\ref{prop:mirrorprox}, where we iterate taking expectations over \eqref{eq:telescope}, each time applying the two conditions in \eqref{eq:randomconds}.
\end{proof}

For the rest of this section, we overload $g_i$ to mean the choices used in \eqref{eq:coordmp}. This choice is motivated via the following two properties, required by \eqref{eq:randomconds} (and shown in Appendix~\ref{app:itermaint}).
\begin{restatable}{lemma}{restateavgmakessense}
\label{lem:avgmakesense}
Let $\bar{w}_t \defeq (x_t + \sum_{i \in [d]} \di, \nabla f(\hvi))$. Then $\forall u$, taking expectations over iteration $t$,
\[\E\left[\inprod{g_i(\wi)}{\wi - u}\right] = \inprod{g(\bar{w}_t)}{\bar{w}_t - u}.\]
\end{restatable}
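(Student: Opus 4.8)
The plan is to expand both sides of the claimed identity in coordinates, using the separable structure of $r(x,y) = \tfrac{\mu}{2}\norm{x}_2^2 + f^*(y)$ together with the definitions in \eqref{eq:coordmp}, and then take the expectation over the sampled coordinate $i\sim p_i$ (conditioned on $z_t$) term by term.

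First I would record the relevant closed forms. Because $r$ separates over its two blocks, the prox step $\wi = \Prox^r_{z_t}(\tfrac{1}{\lam} g_i(z_t))$ decouples. As in Lemma~\ref{lem:iterateform}, the dual block of $g_i(z_t)$ equals $v_t - x_t$, which has no dependence on $i$, so the dual iterate $\hvi = v_t + \tfrac{1}{\lam}(x_t - v_t)$ is the \emph{same} for all $i$ (this is why writing $\hvi = \hv$ is consistent), whereas the primal block yields $\di = \hxi - x_t = -\tfrac{1}{\mu\lam p_i}(\nabla_i f(v_t))\,e_i$, a vector supported on coordinate $i$. I will use two elementary facts: (a) $\E_i[\tfrac{1}{p_i}\di] = \sum_{i\in[d]}\di = \bar{w}_t\x - x_t$; and (b) since $\tfrac{1}{p_i}\di$ and $\tfrac{1}{p_i}(\nabla_i f(\hv))\,e_i$ lie in $\mathrm{span}(e_i)$, one has $\inprod{\tfrac{1}{p_i}(\nabla_i h)\,e_i}{\di} = \inprod{h}{\tfrac{1}{p_i}\di}$ for any vector $h$, and $\E_i[\tfrac{1}{p_i}(\nabla_i f(\hv))\,e_i] = \nabla f(\hv)$.

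Next I would expand both sides. Since $\bar{w}_t = (\bar{w}_t\x, \nabla f(\hv))$ and $g(x,y) = (y, \nabla f^*(y) - x)$ with $\nabla f^*(\nabla f(\hv)) = \hv$, the right-hand side is
\[\inprod{g(\bar{w}_t)}{\bar{w}_t - u} = \inprod{\nabla f(\hv)}{\bar{w}_t\x - u\x} + \inprod{\hv - \bar{w}_t\x}{\nabla f(\hv) - u\y}.\]
On the left, $\wi = (x_t + \di, \nabla f(\hv))$ and, by \eqref{eq:coordmp}, $g_i(\wi) = \big(\tfrac{1}{p_i}(\nabla_i f(\hv))\,e_i,\ \hv - x_t - \tfrac{1}{p_i}\di\big)$, so
\[\inprod{g_i(\wi)}{\wi - u} = \inprod{\tfrac{1}{p_i}(\nabla_i f(\hv))\,e_i}{x_t + \di - u\x} + \inprod{\hv - x_t - \tfrac{1}{p_i}\di}{\nabla f(\hv) - u\y}.\]
Taking $\E_i$: in the first inner product, split off the $\di$ term; the $x_t - u\x$ part contributes $\inprod{\nabla f(\hv)}{x_t - u\x}$ by unbiasedness in (b), and the $\di$ part equals $\inprod{\nabla f(\hv)}{\tfrac{1}{p_i}\di}$ by (b) whose expectation is $\inprod{\nabla f(\hv)}{\bar{w}_t\x - x_t}$ by (a), so this first inner product contributes $\inprod{\nabla f(\hv)}{\bar{w}_t\x - u\x}$ in total. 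In the second inner product $\nabla f(\hv) - u\y$ does not depend on $i$, so $\E_i$ passes inside and (a) gives $\inprod{\hv - x_t - (\bar{w}_t\x - x_t)}{\nabla f(\hv) - u\y} = \inprod{\hv - \bar{w}_t\x}{\nabla f(\hv) - u\y}$. Summing the two pieces reproduces $\inprod{g(\bar{w}_t)}{\bar{w}_t - u}$ exactly.

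The computation is mostly bookkeeping, and the one place that needs care is the first inner product on the left: the primal estimator $\tfrac{1}{p_i}(\nabla_i f(\hv))\,e_i$ is paired against $\wi\x - u\x = x_t + \di - u\x$, which itself depends on $i$ through $\di$, so one cannot merely substitute the estimator by its mean. What saves it is precisely observation (b) — the cross term collapses because $\di$ lies in $\mathrm{span}(e_i)$ — together with the fact that the dual block of $g_i(\wi)$ was rigged with the correction $-\tfrac{1}{p_i}\di$ exactly so that its conditional mean sees $\bar{w}_t\x$ rather than $\hxi$. Granting these two points, everything else is linearity of expectation.
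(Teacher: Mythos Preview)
Your proof is correct and follows essentially the same approach as the paper: you note that $\hv$ is deterministic (independent of the sampled $i$), expand the expectation as a sum over $i$, and use the $1$-sparsity of both $\nabla_i f(\hv)$ and $\di$ to collapse the cross term $\inprod{\tfrac{1}{p_i}(\nabla_i f(\hv))e_i}{\di}$ into $\inprod{\nabla f(\hv)}{\tfrac{1}{p_i}\di}$. The paper's proof is terser (it simply writes out the sum and invokes $1$-sparsity), but you have spelled out the one nontrivial step more carefully.
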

\begin{restatable}[Expected relative Lipschitzness]{lemma}{restateexpectlam}
\label{lem:expectlam}
Let $\lam = 1 + S_{1/2}/\sqrt{\mu}$, where $S_{1/2} \defeq \sum_{i \in [d]}\sqrt{L_i}$. Then, for the iterates \eqref{eq:coordmp} with $p_i = \sqrt{L_i}/S_{1/2}$, taking expectations over iteration $t$,
\begin{align*}\E\left[\inprod{g_i(\wi) - g_i\left(z_t\right)}{\wi - \pzi}\right] 
\le \lam\E\left[V^r_{z_t}(\wi) + V^r_{\wi}(\pzi)\right].\end{align*}
\end{restatable}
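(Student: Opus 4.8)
The plan is to establish the (local) relative Lipschitzness condition \eqref{eq:localrl} for the pair $(g_i,r)$ at the iterates of \eqref{eq:coordmp}, mirroring the proof of Lemma~\ref{lem:betterlambda}; in fact the bound I would prove holds for each realization of the sampled coordinate $i$, so ``taking expectations'' is only a formality. First I would record closed forms for the iterates, exactly as in Lemma~\ref{lem:iterateform}: since $r$ is separable and the sampled coordinate only perturbs the primal block, the proximal steps in \eqref{eq:coordmp} give $\hxi = x_t - \tfrac{1}{\lam\mu p_i}\nabla_i f(v_t)e_i$ --- so $\di = -\tfrac{1}{\lam\mu p_i}\nabla_i f(v_t)e_i$ is supported on $e_i$ --- a dual block $\hvi = v_t + \tfrac{1}{\lam}(x_t - v_t)$ that does not depend on $i$, $\pxi = x_t - \tfrac{1}{\lam\mu p_i}\nabla_i f(\hvi)e_i$ --- so $\hxi - \pxi = \tfrac{1}{\lam\mu p_i}(\nabla_i f(\hvi) - \nabla_i f(v_t))e_i$ is also supported on $e_i$ --- and $\pvi = v_t + \tfrac{1}{\lam}\Par{x_t + \tfrac1{p_i}\di - \hvi}$. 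Using $V^{f^*}_{\nabla f(a)}(\nabla f(b)) = V^f_b(a)$ (Lemma~\ref{lem:bregofdual}) and separability, I would also rewrite the two targets as $V^r_{z_t}(\wi) = \tfrac\mu2\norm{\di}^2 + V^f_{\hvi}(v_t)$ and $V^r_{\wi}(\pzi) = \tfrac\mu2\norm{\hxi - \pxi}^2 + V^f_{\pvi}(\hvi)$. The single extra fact needed is the coordinatewise version of ``squared gradient $\le$ twice the gap'': for convex $L_i$-coordinate-smooth $h$ attaining its minimum, $\Par{\nabla_i h(z)}^2 \le 2L_i\Par{h(z) - \min h}$ (minimize the coordinate-smoothness bound along $e_i$); applying this to $h = f - \inprod{\nabla f(a)}{\cdot}$, which is minimized at $a$, gives $\Par{\nabla_i f(b) - \nabla_i f(a)}^2 \le 2L_i V^f_a(b)$, and since the left side is symmetric in $a$ and $b$ this also holds with the base point of the divergence reversed.

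Second, I would expand $\inprod{g_i(\wi) - g_i(z_t)}{\wi - \pzi}$. From \eqref{eq:coordmp}, $g_i(\wi) - g_i(z_t) = \Par{\tfrac1{p_i}(\nabla_i f(\hvi) - \nabla_i f(v_t))e_i,\ (\hvi - v_t) - \tfrac1{p_i}\di}$ and $\wi - \pzi = \Par{\hxi - \pxi,\ \nabla f(\hvi) - \nabla f(\pvi)}$, so the inner product is a sum of a primal term $\tfrac1{p_i}(\nabla_i f(\hvi) - \nabla_i f(v_t))(\hxi - \pxi)_i$, a correction term $-\tfrac1{p_i}\inprod{\di}{\nabla f(\hvi) - \nabla f(\pvi)}$, and $\inprod{\hvi - v_t}{\nabla f(\hvi) - \nabla f(\pvi)}$. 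The last term equals $\inprod{\nabla f^*(\nabla f(\hvi)) - \nabla f^*(\nabla f(v_t))}{\nabla f(\hvi) - \nabla f(\pvi)}$, which by the three-point inequality for convex $f^*$ (as in \eqref{eq:threeptconvex}) is at most $V^{f^*}_{\nabla f(v_t)}(\nabla f(\hvi)) + V^{f^*}_{\nabla f(\hvi)}(\nabla f(\pvi)) = V^f_{\hvi}(v_t) + V^f_{\pvi}(\hvi)$; thus this term already supplies one copy each of the divergence parts of $V^r_{z_t}(\wi)$ and $V^r_{\wi}(\pzi)$, i.e.\ the ``$1$'' in the coefficient $\lam$.

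It remains to dominate the primal and correction terms by $\lam$ times the quadratic parts of $V^r_{z_t}(\wi) + V^r_{\wi}(\pzi)$ plus $(\lam-1)$ times their $V^f$ parts. For the primal term, plugging in $(\hxi - \pxi)_i = \tfrac1{\lam\mu p_i}(\nabla_i f(\hvi) - \nabla_i f(v_t))$ yields the exact identity $\tfrac1{p_i}(\nabla_i f(\hvi) - \nabla_i f(v_t))(\hxi - \pxi)_i = \lam\tfrac\mu2\norm{\hxi - \pxi}^2 + \tfrac{(\nabla_i f(\hvi) - \nabla_i f(v_t))^2}{2\lam\mu p_i^2}$; bounding the second summand with $(\nabla_i f(\hvi) - \nabla_i f(v_t))^2 \le 2L_i V^f_{\hvi}(v_t)$ and using $p_i = \sqrt{L_i}/S_{1/2}$ (hence $L_i/p_i^2 = S_{1/2}^2 = (\lam-1)^2\mu$) shows it is $\le \tfrac{(\lam-1)^2}{\lam}V^f_{\hvi}(v_t) \le (\lam-1)V^f_{\hvi}(v_t)$. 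For the correction term, $\di$ is supported on $e_i$, so an AM--GM split with weight $\tfrac{\lam-1}{L_i}$ gives $-\tfrac1{p_i}\inprod{\di}{\nabla f(\hvi) - \nabla f(\pvi)} \le \tfrac{\lam-1}{2L_i}(\nabla_i f(\hvi) - \nabla_i f(\pvi))^2 + \tfrac{L_i}{2(\lam-1)p_i^2}\norm{\di}^2$, and applying the coordinate inequality in the form $(\nabla_i f(\hvi) - \nabla_i f(\pvi))^2 \le 2L_i V^f_{\pvi}(\hvi)$ together with $L_i/p_i^2 = (\lam-1)^2\mu$ bounds this by $(\lam-1)V^f_{\pvi}(\hvi) + \tfrac{(\lam-1)\mu}2\norm{\di}^2 \le (\lam-1)V^f_{\pvi}(\hvi) + \lam\tfrac\mu2\norm{\di}^2$. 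Summing the three contributions, the totals collapse to $\lam\Par{\tfrac\mu2\norm{\di}^2 + V^f_{\hvi}(v_t)} + \lam\Par{\tfrac\mu2\norm{\hxi - \pxi}^2 + V^f_{\pvi}(\hvi)} = \lam\Par{V^r_{z_t}(\wi) + V^r_{\wi}(\pzi)}$, as desired.

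The step I expect to be most delicate is the calibration. The coordinate inequality $(\nabla_i f(b) - \nabla_i f(a))^2 \le 2L_i V^f_a(b)$ must be invoked with the base point $a$ chosen so that the divergence produced is exactly one of $V^f_{\hvi}(v_t)$ or $V^f_{\pvi}(\hvi)$ --- the ones appearing in $V^r_{z_t}(\wi)$, $V^r_{\wi}(\pzi)$ --- and not its reversal; and the AM--GM weight and the reading of the primal identity must be arranged so that the two ``$\norm{\cdot}^2$'' residuals fall exactly onto $\lam\tfrac\mu2\norm{\di}^2$ and $\lam\tfrac\mu2\norm{\hxi - \pxi}^2$ while the two ``gradient-squared'' residuals fall within $(\lam-1)$ of the corresponding $V^f$ terms. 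This is precisely where the calibration $p_i = \sqrt{L_i}/S_{1/2}$ and $\lam - 1 = S_{1/2}/\sqrt\mu$ enters, through $L_i/p_i^2 = S_{1/2}^2$, $S_{1/2}^2 = (\lam-1)^2\mu$, and the slack $(\lam-1)^2 \le \lam(\lam-1)$.
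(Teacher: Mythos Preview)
Your proposal is correct and follows essentially the same route as the paper: the identical three-term decomposition of $\inprod{g_i(\wi)-g_i(z_t)}{\wi-\pzi}$, bounding the $\inprod{\hvi-v_t}{\nabla f(\hvi)-\nabla f(\pvi)}$ term by the three-point convexity argument \eqref{eq:threeptconvex} to produce the ``$1$'' in $\lam$, and bounding the primal and correction terms via Young's inequality together with the coordinate inequality $(\nabla_i f(b)-\nabla_i f(a))^2\le 2L_i V^f_a(b)$ (Lemma~\ref{lem:coordsmoothsc}) to produce the ``$S_{1/2}/\sqrt\mu$''.

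There are two minor differences worth noting. First, the paper takes an expectation before applying \eqref{eq:threeptconvex} to the dual term and then invokes convexity of $V^{f^*}_{\nabla f(\hvi)}(\cdot)$ to push the expectation back inside; your observation that \eqref{eq:threeptconvex} already applies for each realization of $i$ makes this Jensen step unnecessary and yields the slightly stronger per-$i$ inequality. Second, for the primal term the paper uses a straight Young split, whereas you exploit the explicit identity $(\hxi-\pxi)_i=\tfrac{1}{\lam\mu p_i}(\nabla_i f(\hvi)-\nabla_i f(v_t))$ to write Term~1 exactly and only then pass to the coordinate inequality; both routes land on $(\lam-1)$ times the corresponding $V^f$ piece plus $\lam$ times the quadratic piece, so the final bound is identical.
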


Crucially, our proof of these results uses the fact that our randomized gradient estimators are $1$-sparse in the $x$ component, and the fact that we ``shared randomness'' in the definition of the gradient estimators. Moreover, our iterates are efficiently implementable, under the ``generalized partial derivative oracle'' of prior work \cite{LeeS13, ZhuQRY16, NesterovS17}, which computes $\nabla_i f(ax + by)$ for $x, y \in \mathbb{R}^d$ and $a, b \in \mathbb{R}$. In many settings of interest, these oracles can be implemented with a dimension-independent runtime; we defer a discussion to previous references.

\begin{lemma}[Iterate maintenance]
\label{lem:implinformal}
We can implement each iteration of Algorithm~\ref{alg:coord} using two generalized partial derivative oracle queries and constant additional work.
\end{lemma}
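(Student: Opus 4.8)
The plan is to derive explicit closed forms for all quantities in \eqref{eq:coordmp} and then argue that, kept in an appropriate scaled representation, one step updates them using exactly two partial-derivative evaluations plus $O(1)$ arithmetic. The key structural observation is that the only state carried between iterations is the pair of primal vectors $(x_t, v_t)$: since $r(x,y) = \tfrac{\mu}{2}\norm{x}_2^2 + f^*(y)$ is separable the proximal steps decouple into an $x$-block and a $y$-block, and by the conjugacy argument of Lemma~\ref{lem:iterateform} every dual quantity in \eqref{eq:coordmp} equals $\nabla f$ of an explicit primal point, so no dual vector is ever formed. Writing $z_t = (x_t, \nabla f(v_t))$ with sampled coordinate $i$, the $x$-block of each $\Prox^r_{z_t}$ is a gradient step in $\tfrac{\mu}{2}\norm{\cdot}_2^2$ against a $1$-sparse vector and the $y$-block is handled as in Lemma~\ref{lem:iterateform}, yielding (via Lemma~\ref{lem:argmaxconj})
\[
\hvi = v_t + \tfrac1\lam(x_t - v_t),\quad \di = -\tfrac{1}{\mu\lam p_i}\nabla_i f(v_t)\,e_i,\quad \pxi = x_t - \tfrac{1}{\mu\lam p_i}\nabla_i f(\hvi)\,e_i,\quad \pvi = v_t + \tfrac{\lam-1}{\lam^2}(x_t - v_t) + \tfrac{1}{\lam p_i}\di,
\]
where in fact $\hvi$ does not depend on $i$. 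I would state these identities and note that each is a one-line consequence of the definition of $\Prox^r$, rather than grinding through them.

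Given these formulas the oracle-query count is immediate. Forming $g_i(z_t)$, hence $\di$ and $\hxi$, needs $\nabla_i f(v_t)$; forming $g_i(\wi)$, hence $\pxi$ and $\pvi$, needs $\nabla_i f(\hvi)$. Since $\hvi = \tfrac1\lam x_t + (1 - \tfrac1\lam)v_t$ is an affine combination of the two maintained vectors, $\nabla_i f(\hvi)$ is a legal generalized partial-derivative query; so is $\nabla_i f(v_t)$. Every remaining operation is scalar arithmetic and $1$-sparse vector writes: $x_t$ changes only in coordinate $i$ per step, and the ``forcing'' in the $v$-recursion (the terms $\nabla_i f(\cdot)e_i$) is $1$-sparse.

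The only genuine obstacle is that the $v$-update carries the \emph{dense} term $\tfrac{\lam-1}{\lam^2}(x_t - v_t)$, so a literal implementation spends $\Omega(d)$ per iteration (and makes $\hvi$ expensive to assemble). This is exactly the situation resolved by the standard iterate-maintenance trick for accelerated (randomized) coordinate descent \cite{LeeS13, ZhuQRY16, NesterovS17}: because the joint recursion on $(x_t, v_t)$ is \emph{linear} with $1$-sparse inhomogeneous terms, one represents $x_t$ and $v_t$ as a bounded number of maintained vectors scaled by running scalar coefficients, so that a step updates only $O(1)$ scalars and a single coordinate of one vector; the generalized partial-derivative oracle then evaluates $\nabla_i f$ of any needed affine combination from this representation with $O(1)$ extra work, which is precisely what the two queries above demand. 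The averages used to build the returned iterate (cf.\ Lemma~\ref{lem:avgmakesense}) are linear in $(x_t, v_t, \nabla f(\hvi))$ and fold into the same representation. Putting together the explicit updates, the two queries, and this linear-recursion maintenance gives the claim; I expect the bookkeeping of the scaled representation to be the fussiest part, but it is routine given the cited references.
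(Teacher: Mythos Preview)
Your proposal is correct and follows essentially the same approach as the paper. The paper's formal proof (Lemma~\ref{lem:impl}) derives the same closed forms you do, writes the joint $(x_t,v_t)$ update as a fixed $2\times 2$ linear map $\mathbf{A}$ plus a $2$-sparse forcing term $s_t$, and then maintains the explicit representation $(x_t\; v_t) = (p_t\; q_t)\mathbf{B_t}$ with $\mathbf{B_{t+1}} = \mathbf{B_t}\mathbf{A}$ and $(p_{t+1}\; q_{t+1}) = (p_t\; q_t) - s_t\mathbf{B_{t+1}}^{-1}$; this is precisely the ``standard iterate-maintenance trick'' you invoke by reference, just made concrete. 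One small remark: Algorithm~\ref{alg:coord} does not actually accumulate the average iterate you mention at the end, but instead samples a random stopping time $\tau$ and returns $\hvi$ at that step, so no additional averaging machinery is needed.
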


We defer a formal statement to Appendix~\ref{app:itermaint}, as Lemma~\ref{lem:impl}. Combining Lemma~\ref{lem:avgmakesense} and Lemma~\ref{lem:expectlam},  \eqref{eq:randomconds} is satisfied with $\lam = 1 + S_{1/2}/\sqrt{\mu}$. Finally, all of these pieces directly imply the following, via the proof of Theorem~\ref{thm:accel} and iterating expectations. We give our full method as Algorithm~\ref{alg:coord}. 

\begin{theorem}[Coordinate acceleration]
\label{thm:coordaccel}
Algorithm~\ref{alg:coord} produces an $\eps$-approximate minimizer of $f$ in
\[O\left(
\sum_{i \in [d]}
\sqrt{\frac{L_i}{\mu}} \log\left(\frac{f(x_0) - f(x^*)}{\eps}\right)\right) \text{ iterations in expectation,}\]
with iteration complexity given by Lemma~\ref{lem:implinformal}.
\end{theorem}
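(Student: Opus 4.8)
The plan is to assemble the three pieces already in place — the randomized regret bound of Proposition~\ref{prop:generalmp}, the unbiased-average identity of Lemma~\ref{lem:avgmakesense}, and the expected relative Lipschitzness of Lemma~\ref{lem:expectlam} — and then run the outer restart loop exactly as in Theorem~\ref{thm:accel}. Lemma~\ref{lem:avgmakesense} verifies the first condition of \eqref{eq:randomconds} with the average iterate $\bar w_t = (x_t + \sum_{i}\di,\, \nabla f(\hv))$, and Lemma~\ref{lem:expectlam} verifies the second with $\lam = 1 + S_{1/2}/\sqrt{\mu}$, $S_{1/2} \defeq \sum_i \sqrt{L_i}$; since the coordinate estimators are built on the separable $r(x,y) = \tfrac\mu2\norm{x}_2^2 + f^*(y)$ and are $1$-sparse in $x$ with shared randomness, these are precisely the hypotheses of Proposition~\ref{prop:generalmp}, which then yields $\E[\sum_{0\le t<T}\inprod{g(\bar w_t)}{\bar w_t - u}] \le \lam V^r_{z_0}(u)$ for all $u\in\zset$. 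Note $\lam = \Theta(\sum_i\sqrt{L_i/\mu})$ since $L_i\ge\mu$ for each $i$.

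Next I would run one phase of $T \ge 4\lam$ such iterations from $z_0 = (x_0,\nabla f(x_0))$ and substitute $u = (x^*,\nabla f(x^*)) = (x^*, 0)$, mirroring the proof of Theorem~\ref{thm:accel}. By the Bregman-of-dual identity (Lemma~\ref{lem:bregofdual}) and $\mu$-strong convexity of $f$, $V^r_{z_0}(u) = \tfrac\mu2\norm{x_0 - x^*}_2^2 + f(x_0) - f(x^*) \le 2(f(x_0) - f(x^*))$, so the regret bound gives $\tfrac1T\E[\sum_t \inprod{g(\bar w_t)}{\bar w_t - u}] \le \tfrac{2\lam}{T}(f(x_0)-f(x^*)) \le \tfrac12(f(x_0)-f(x^*))$. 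For the left side I would observe that the $y$-component of each $g_i(z_t)$ is $v_t - x_t$, independent of the sampled coordinate, so the dual iterate $\hv = v_t + \tfrac1\lam(x_t - v_t)$ is deterministic given $z_t$ and the $y$-coordinate of $\bar w_t$ is exactly $\nabla f(\hv)$; expanding $g(x,y) = (y,\nabla f^*(y) - x)$ and using $\nabla f^*(\nabla f(\hv)) = \hv$, the reweighted primal displacement $x_t + \sum_i\di$ cancels and one gets $\inprod{g(\bar w_t)}{\bar w_t - u} = \inprod{\nabla f(\hv)}{\hv - x^*} \ge f(\hv) - f(x^*)$ pointwise. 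Averaging over $t$, taking expectations, and applying convexity of $f$ to $\bar v = \tfrac1T\sum_{0\le t<T} v_{t+\half}$ yields $\E[f(\bar v)] - f(x^*) \le \tfrac12(f(x_0) - f(x^*))$: one phase halves the expected function error.

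Finally I would iterate this $K = \lceil\log_2(\eps_0/\eps)\rceil$ times, taking the $(k{+}1)$-st phase to start at the average $\bar v$ produced by phase $k$. Conditioning on the phase-$k$ start $x^{(k)}$, the previous paragraph gives $\E[f(x^{(k+1)}) - f(x^*)\mid x^{(k)}] \le \tfrac12(f(x^{(k)}) - f(x^*))$, so the tower rule gives $\E[f(x^{(K)}) - f(x^*)] \le 2^{-K}\eps_0 \le \eps$. The iteration count is $TK = O(\lam\log(\eps_0/\eps)) = O\big(\sum_i\sqrt{L_i/\mu}\,\log((f(x_0)-f(x^*))/\eps)\big)$ using $\eps_0 \ge f(x_0) - f(x^*)$, and the per-iteration cost is given by Lemma~\ref{lem:implinformal}. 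The substantive work is all upstream, in Lemmas~\ref{lem:avgmakesense} and~\ref{lem:expectlam} (where the bias of $g_i(w_t^{(i)})$ is controlled by the shared-randomness construction), which we may assume; the theorem's proof is then routine assembly. The one place to be careful is the "compatibility" step above — that the random average iterate $\bar w_t$ has $y$-coordinate exactly $\nabla f(\hv)$ so the primal displacement drops out of the value bound — and, secondarily, stating the per-phase halving conditionally so the outer tower-rule argument is valid, while tracking the $T \ge 4\lam$ constant and the factor-$2$ slack in $V^r_{z_0}(u)$.
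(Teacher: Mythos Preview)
Your proposal is correct and follows essentially the same approach as the paper, whose proof is the single line ``follows from the proof of Theorem~\ref{thm:accel}, using Proposition~\ref{prop:generalmp} in place of Proposition~\ref{prop:mirrorprox}''; you have simply spelled out the details, including the key observation that the primal component of $\bar w_t$ cancels in $\inprod{g(\bar w_t)}{\bar w_t - u}$ so the lower bound $f(\hv) - f(x^*)$ goes through unchanged. One cosmetic discrepancy: Algorithm~\ref{alg:coord} actually restarts from a uniformly sampled $v_{\tau+\half}$ rather than the average $\bar v$ (to avoid $O(d)$ accumulation cost), but since $\E_\tau[f(v_{\tau+\half})] = \tfrac1T\sum_t f(v_{t+\half})$ this yields the same expected-halving bound without even invoking convexity of $f$ at that step.
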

\begin{proof}
This follows from the proof of Theorem~\ref{thm:accel}, using Proposition~\ref{prop:generalmp} in place of Proposition~\ref{prop:mirrorprox}.
\end{proof} %
\section{Discussion}
We give a general condition for extragradient algorithms to converge at a $O(T^{-1})$ rate. In turn, we show that this condition (coupled with additional tools such as locality, randomization, or strong monotonicity) yields a recipe for tighter convergence guarantees in structured instances. While our condition applies generally, we find it interesting to broaden the types of instances where it obtains improved runtimes by formulating appropriate VI problems. For example, can we recover acceleration in settings such as finite-sum convex optimization (i.e.\ for stochastic gradient methods) \cite{Zhu17} or composite optimization \cite{BeckT09}? Moreover, we are interested in the interplay between (tighter analyses of) extragradient algorithms with other algorithmic frameworks. For example, is there a way to interpolate between our minimax algorithm and the momentum-based framework of \cite{LinJJ20} to obtain tight runtimes for minimax optimization? Ultimately, our hope is that our methods serve as an important stepping stone towards developing the toolkit for solving e.g.\ convex-concave games and variational inequalities in general. \subsection*{Acknowledgments}

The existence of an extragradient algorithm in the primal-dual formulation of smooth minimization directly achieving accelerated rates is due to discussions with the first author, Michael B. Cohen. The second and third authors are indebted to him, and this work is dedicated in his memory.

We also thank our collaborators in concurrent works, Yair Carmon and Yujia Jin, for many helpful and encouraging conversations throughout the duration of this project, Jelena Diakonikolas, Jonathan Kelner, and Jonah Sherman for helpful conversations, and anonymous reviewers for multiple helpful comments on earlier versions of the paper.

Researchers supported in part by Microsoft Research Faculty Fellowship, NSF CAREER Award CCF-1844855, NSF Grant CCF-1955039, a PayPal research gift, and a Sloan Research Fellowship.
\newpage
\bibliographystyle{alpha}		
\bibliography{primaldual}
\newpage
\begin{appendix}	
\section{Preliminaries}
\label{appendix:convex_analysis}

We use the standard notion of a convex conjugate of a convex function, also known as the Fenchel dual, which we will refer to as a ``dual function'' throughout. 
\begin{definition}[Fenchel dual]
	For convex function $f$, its \emph{Fenchel dual} is defined by
	\begin{equation*}
	f^*(y) \defeq \textup{min}_x \left\{\inprod{y}{x} - f(x)\right\}
	\end{equation*}
\end{definition}

We state several key facts about dual functions that we will frequently make use of. These are well-known and we defer proofs to standard texts, e.g.\ \cite{Rockafellar70}.

\begin{lemma}[Convex conjugate, Fenchel duality]
	For any convex $f$, its Fenchel dual $f^*$, and $x, y$ in their respective domains, $f(x) + f^*(y) \geq \inprod{y}{x}$.
\end{lemma}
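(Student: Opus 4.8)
The plan is to obtain the inequality directly from the definition of the Fenchel dual, read with the extremization oriented so that biconjugacy holds --- i.e.\ $f^*(y) = \sup_x\{\inprod{y}{x} - f(x)\}$, which is the convention implicitly used later via the $\argmax$ characterizations (Lemma~\ref{lem:argmaxconj}, Lemma~\ref{lem:iterateform}) and the Bregman-divergence identity for $f^*$ (Lemma~\ref{lem:bregofdual}). Under this reading the statement is essentially immediate and requires no machinery beyond the definition.

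First I would fix an arbitrary $x$ in the (effective) domain of $f$ and an arbitrary $y$ in the domain of $f^*$. Since $f^*(y)$ is a supremum over all admissible arguments, plugging in the specific argument $x$ gives $f^*(y) \geq \inprod{y}{x} - f(x)$. Rearranging, $f(x) + f^*(y) \geq \inprod{y}{x}$; as $x$ and $y$ were arbitrary, this is the claim. For completeness I would also record the equality case, which is what makes the lemma useful downstream: $f(x) + f^*(y) = \inprod{y}{x}$ iff $x$ attains the supremum defining $f^*(y)$, equivalently $y \in \partial f(x)$; when $f$ (resp.\ $f^*$) is differentiable this reads $y = \nabla f(x)$ (resp.\ $x = \nabla f^*(y)$). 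These refinements feed directly into identifying $V^f$ and $V^{f^*}$ divergences but are not needed for the inequality itself.

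There is no real obstacle here --- the proof is a one-liner from the definition. The only points of care are conventional: (i) orienting the conjugate as a supremum rather than an infimum (for convex $f$ an infimum of $\inprod{y}{x} - f(x)$ over $x$ is generically $-\infty$, so the stated inequality forces the supremum reading); and (ii) restricting to $x, y$ in their respective effective domains so that both sides are finite, the inequality extending trivially otherwise under the usual $\pm\infty$ conventions. If in addition $f$ is proper, closed, and convex, then $(f^*)^{*} = f$, so the roles of $f$ and $f^*$ are symmetric; this symmetry is a convenient sanity check but is not invoked in the proof, and the paper itself simply defers the statement to standard references such as \cite{Rockafellar70}.
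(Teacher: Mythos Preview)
Your proof is correct and is exactly the standard one-line argument from the definition of the conjugate as a supremum; the paper does not give its own proof of this lemma but defers to standard references (e.g.\ \cite{Rockafellar70}), so your approach coincides with what one finds there. Your observation that the paper's stated definition of $f^*$ has the wrong extremization (it writes $\min$ where $\sup$ is intended, as confirmed by the $\argmax$ usage in Lemma~\ref{lem:argmaxconj} and elsewhere) is also correct and worth noting.
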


\begin{lemma}[Conjugate of a conjugate, maximizing argument]
\label{lem:argmaxconj}
	For convex $f$, it holds that $f^{**}(x) = f(x)$, and if $x = \textup{argmax}_{x'} \{\inprod{y}{x} - f(x')\}$, then $x \in \partial f^*(y)$ where $\partial$ is the subgradient operator. Consequently, if $f$ and $f^*$ are both differentiable, $\nabla f$ and $\nabla f^*$ are inverse functions.
\end{lemma}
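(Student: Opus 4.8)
The plan is to establish the three assertions in order: the biconjugate identity $f^{**} = f$, the subgradient inclusion, and the inverse-function consequence. The biconjugate identity is the heart of the argument, and I would prove it by the standard separation (Fenchel--Moreau) route. The easy direction is $f^{**} \le f$: for any $x$ and any $y$, the Fenchel--Young inequality (the ``Convex conjugate, Fenchel duality'' lemma above) gives $\inprod{y}{x} - f^*(y) \le f(x)$, and taking the supremum over $y$ yields $f^{**}(x) \le f(x)$. For the reverse inequality, suppose toward contradiction that $f^{**}(x_0) < f(x_0)$ for some $x_0$. Since $f$ is closed convex, its epigraph is a closed convex set not containing the point $(x_0, f^{**}(x_0))$, so a separating hyperplane with normal $(a, b)$ exists; since the epigraph is unbounded upward in the $t$-coordinate, orienting the inequality away from the epigraph forces $b \ge 0$. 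When $b > 0$, normalizing $b = 1$ shows that $x \mapsto c - \inprod{a}{x}$ is an affine minorant of $f$, which translates into the bound $f^*(-a) \le -c$ and hence $f^{**}(x_0) \ge \inprod{-a}{x_0} - f^*(-a) \ge c - \inprod{a}{x_0}$, contradicting the strict separation $\inprod{a}{x_0} + f^{**}(x_0) < c$.

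The delicate case is $b = 0$ (a vertical separating hyperplane), in which $x_0$ must lie outside $\overline{\mathrm{dom}(f)}$ so that $f(x_0) = +\infty$; here I would take any affine minorant $\ell_0 \le f$ (which exists since $f$ is proper convex) and add to it arbitrarily large nonnegative multiples of $x \mapsto c - \inprod{a}{x}$, which is $\le 0$ on $\mathrm{dom}(f)$ but strictly positive at $x_0$. This produces affine minorants of $f$ whose value at $x_0$ tends to $+\infty$, forcing $f^{**}(x_0) = +\infty = f(x_0)$, again a contradiction. I expect this vertical-hyperplane case to be the main technical obstacle, as everywhere else the estimates are one-line consequences of Fenchel--Young.

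For the second assertion, observe that $x = \argmax_{x'}\{\inprod{y}{x'} - f(x')\}$ says precisely that equality holds in Fenchel--Young: $f(x) + f^*(y) = \inprod{y}{x}$. Combining this with the biconjugate bound $f^*(y') \ge \inprod{y'}{x} - f^{**}(x) = \inprod{y'}{x} - f(x)$ (valid for every $y'$ by the first part), and substituting $f(x) = \inprod{y}{x} - f^*(y)$, gives $f^*(y') \ge f^*(y) + \inprod{x}{y' - y}$ for all $y'$, which is exactly $x \in \partial f^*(y)$. Finally, for the consequence: a finite convex function differentiable at a point has subdifferential there equal to the singleton $\{\nabla f\}$, so $y \in \partial f(x)$ is equivalent to $y = \nabla f(x)$, and likewise for $f^*$. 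By the symmetry of the equality condition in Fenchel--Young, $y = \nabla f(x) \iff f(x) + f^*(y) = \inprod{y}{x} \iff x = \nabla f^*(y)$; reading this in both directions gives $\nabla f^*(\nabla f(x)) = x$ and $\nabla f(\nabla f^*(y)) = y$, i.e.\ $\nabla f$ and $\nabla f^*$ are mutually inverse (and, consistently, the maximizer defining $\nabla f^*(y)$ is then unique).
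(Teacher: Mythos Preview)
Your proof is correct and follows the standard Fenchel--Moreau route. The paper itself does not prove this lemma: it is grouped with several facts that the paper calls ``well-known'' and explicitly defers to standard texts (Rockafellar), so there is no in-paper argument to compare against. One small caveat worth flagging: the biconjugate identity $f^{**} = f$ requires $f$ to be proper and lower semicontinuous, a hypothesis you correctly invoke (``closed convex'') but which the lemma statement itself omits; in the paper's applications this is harmless since the relevant functions are smooth and finite everywhere.
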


\begin{lemma}[Divergence of $f^*$]
	\label{lem:bregofdual}
	The Bregman divergences of dual functions are related by
	\[V^{f^*}_{\nabla f(x)}(\nabla f(x')) = V^f_{x'}(x).\]
\end{lemma}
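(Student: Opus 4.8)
The plan is to expand the left-hand side directly from the definition of the Bregman divergence and simplify using two standard facts about Fenchel conjugates, both recorded earlier: that $\nabla f$ and $\nabla f^*$ are mutually inverse (Lemma~\ref{lem:argmaxconj}), so that $\nabla f^*(\nabla f(x)) = x$ and $\nabla f^*(\nabla f(x')) = x'$; and the equality form of Fenchel--Young, namely $f^*(\nabla f(z)) = \inprod{\nabla f(z)}{z} - f(z)$ for every $z$, which holds because $z$ attains the maximum defining $f^*$ at the dual point $\nabla f(z)$ (again Lemma~\ref{lem:argmaxconj}, together with differentiability of $f$).

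Concretely, I would start from
\[V^{f^*}_{\nabla f(x)}(\nabla f(x')) = f^*(\nabla f(x')) - f^*(\nabla f(x)) - \inprod{\nabla f^*(\nabla f(x))}{\nabla f(x') - \nabla f(x)},\]
substitute $\nabla f^*(\nabla f(x)) = x$, and replace each term of the form $f^*(\nabla f(z))$ with $\inprod{\nabla f(z)}{z} - f(z)$. The terms $\inprod{\nabla f(x)}{x}$ then cancel, and after collecting what remains one is left with $f(x) - f(x') - \inprod{\nabla f(x')}{x - x'}$, which is exactly $V^f_{x'}(x)$ by definition.

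Since this is essentially a two-line computation rather than a genuine argument, I expect no real obstacle; the only points that need care are (i) invoking the equality, not merely the inequality, version of Fenchel--Young, which is precisely what Lemma~\ref{lem:argmaxconj} supplies, and (ii) the standing assumption that $f$ and $f^*$ are differentiable, so that all the gradients appearing and the inverse-function identity are well-defined. I would therefore simply carry out the cancellation in full rather than sketch it.
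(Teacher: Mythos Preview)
Your proposal is correct; the computation goes through exactly as you describe, and the two caveats you flag (equality in Fenchel--Young at the maximizing argument, and differentiability of $f$ and $f^*$) are the only things that need to be in place. Note that the paper itself does not supply a proof of this lemma---it is listed among the ``well-known'' facts in Appendix~\ref{appendix:convex_analysis} with proofs deferred to standard texts---so there is no in-paper argument to compare against; your derivation is the standard one.
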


We prove two statements regarding strong convexity of the dual of a smooth function. The first has been previously been observed by e.g.\ \cite{KakadeST09}, but we include a proof for completeness as a precursor to Lemma~\ref{lem:coordsmoothsc}, a coordinate smoothness generalization.

\begin{lemma}[Strong convexity of the dual]
	\label{lem:scdual}
	Suppose $f$ is convex and $L$-smooth with respect to $\norm{\cdot}$. Then, $f^*$ is $\frac{1}{L}$-strongly convex with respect to $\norm{\cdot}_*$.
\end{lemma}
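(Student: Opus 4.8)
The plan is to prove the duality between smoothness and strong convexity via a direct argument using the variational characterization of $f^*$ and its gradient. Recall that if $f$ is convex and differentiable, then $\nabla f^*$ is the inverse map of $\nabla f$ (Lemma~\ref{lem:argmaxconj}); more carefully, for any $y$ in the domain of $f^*$, any maximizer $x_y \in \argmax_{x'}\{\inprod{y}{x'} - f(x')\}$ satisfies $y = \nabla f(x_y)$. So the strategy is: take two points $y_1, y_2$ in the domain of $f^*$, let $x_1, x_2$ be the corresponding primal points with $\nabla f(x_i) = y_i$, and show $V^{f^*}_{y_1}(y_2) \ge \frac{1}{2L}\norm{y_2 - y_1}_*^2$, which is the standard second-order-type characterization of $\frac1L$-strong convexity (this characterization is recorded in the ``Functions'' paragraph of Section~\ref{sec:definitions}).

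First I would use Lemma~\ref{lem:bregofdual}, which states $V^{f^*}_{\nabla f(x_1)}(\nabla f(x_2)) = V^f_{x_2}(x_1)$, so it suffices to lower bound $V^f_{x_2}(x_1)$ by $\frac{1}{2L}\norm{\nabla f(x_1) - \nabla f(x_2)}_*^2 = \frac{1}{2L}\norm{y_1 - y_2}_*^2$. To get this, I would invoke the standard ``co-coercivity'' consequence of $L$-smoothness: for an $L$-smooth convex $f$, one has $V^f_{x_2}(x_1) = f(x_1) - f(x_2) - \inprod{\nabla f(x_2)}{x_1 - x_2} \ge \frac{1}{2L}\norm{\nabla f(x_1) - \nabla f(x_2)}_*^2$ for all $x_1, x_2$. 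The cleanest self-contained derivation of this: fix $x_2$ and consider the function $\phi(x) \defeq f(x) - \inprod{\nabla f(x_2)}{x}$, which is convex, $L$-smooth, and minimized at $x_2$ (since $\nabla\phi(x_2)=0$); then for any $x_1$, minimizing the smoothness upper bound $\phi(x_1 - \frac1L v) \le \phi(x_1) - \frac1L\inprod{\nabla\phi(x_1)}{v} + \frac{1}{2L}\norm{v}^2$ over the choice of $v$ attaining $\norm{\nabla\phi(x_1)}_* = \inprod{\nabla\phi(x_1)}{v}/\norm{v}$ gives $\phi(x_2) \le \phi(x_1) - \frac{1}{2L}\norm{\nabla\phi(x_1)}_*^2$, i.e.\ $V^f_{x_2}(x_1) \ge \frac{1}{2L}\norm{\nabla f(x_1) - \nabla f(x_2)}_*^2$. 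Chaining this with Lemma~\ref{lem:bregofdual} and the equivalence between the quadratic-lower-bound form and the definition of $\frac1L$-strong convexity of $f^*$ (using that $\nabla f^*(y_1) = x_1$ so the inner product terms match up) completes the proof.

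The main obstacle I anticipate is being careful about differentiability and domains: $f$ being $L$-smooth on $\R^d$ forces $f^*$ to be differentiable with full domain in the relevant cases, but in general $f^*$ need only be differentiable on the interior of its domain, and one should make sure the maximizer $x_y$ exists so that $\nabla f^*(y) = x_y$ is well-defined. Since the paper elsewhere works with $f$ smooth (and typically strongly convex) on all of $\R^d$, I would state and use the lemma in that regime, where $\nabla f$ is a bijection $\R^d \to \mathrm{dom}(f^*)$ and no subtleties arise; a remark that the general statement follows by the same argument restricted to $\mathrm{int}(\mathrm{dom}\,f^*)$ suffices. The only other mild care needed is the direction of the strong-convexity inequality when $\norm{\cdot}$ is a general (non-Euclidean) norm — here the key point is simply that the Fenchel--Young inequality and the minimizing-over-$v$ step are valid for arbitrary norms with their duals, which is exactly how $\norm{\cdot}_*$ enters.
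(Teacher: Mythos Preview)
Your proposal is correct and follows essentially the same route as the paper: both reduce via Lemma~\ref{lem:bregofdual} to the primal co-coercivity inequality $V^f_{x_2}(x_1)\ge \tfrac{1}{2L}\norm{\nabla f(x_1)-\nabla f(x_2)}_*^2$, and both prove it by combining convexity (the value at the minimizer $x_2$ lower-bounds the smoothness upper bound centered at $x_1$) with the observation that minimizing that quadratic upper bound over the displacement yields exactly $-\tfrac{1}{2L}\norm{\nabla f(x_1)-\nabla f(x_2)}_*^2$. The only cosmetic difference is that the paper introduces the minimizing point $z$ directly, whereas you phrase the same step through the shifted function $\phi$.
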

\begin{proof}
This is equivalent to showing that, for two dual points $\xi_1, \xi_2$, we have
\begin{equation*}
f^*(\xi_2) - f^*(\xi_1) - \inprod{\nabla f^*(\xi_1)}{\xi_1 - \xi_2} \geq \frac{1}{2L} \norm{\xi_1 - \xi_2}_*^2.
\end{equation*}
Writing $y = \nabla f^*(\xi_1), x = \nabla f^*(\xi_2)$, we also have $\xi_1 = \nabla f(y), \xi_2 = \nabla f(x)$ by the earlier analysis of maximizing arguments. Furthermore, by Lemma~\ref{lem:bregofdual} relating the Bregman divergences of conjugate functions, it suffices to show that
\begin{equation*}
f(y) - f(x) - \inprod{\nabla f(x)}{y - x} \geq \frac{1}{2L}\norm{\nabla f(y) - \nabla f(x)}_*^2.
\end{equation*}
Let
\begin{align*}
z = \textrm{argmin}_w \inprod{\nabla f(y) - \nabla f(x)}{w - y} + \frac{L}{2} \norm{w - y}^2,
\end{align*}
where we note that by the equality case of Cauchy-Schwarz, we have
\begin{align*}
\inprod{\nabla f(y) - \nabla f(x)}{z - y} + \frac{L}{2} \norm{z - y}^2 = -\frac{1}{2L} \norm{\nabla f(y) - \nabla f(x)}_*^2.
\end{align*}
Then, the proof follows directly by using convexity and smoothness. Indeed,
\begin{align*}
0 &\leq f(z) - f(x) - \inprod{\nabla f(x)}{z - x} \\
&\leq f(y) + \inprod{\nabla f(y)}{z - y} + \frac{L}{2} \norm{z - y}^2 - f(x) - \inprod{\nabla f(x)}{z - x} \\
& = f(y) - f(x) - \inprod{\nabla f(x)}{y-x} + \inprod{\nabla f(y) - \nabla f(x)}{z - y} + \frac{L}{2} \norm{z - y}^2 \\
&= f(y) - f(x) - \inprod{\nabla f(x)}{y - x} - \frac{1}{2L}\norm{\nabla f(y) - \nabla f(x)}_*^2,
\end{align*}
where in the last line, we used the definition of $z$. This proves the desired claim.
\end{proof}

The following Lemma~\ref{lem:coordsmoothsc} can be thought of as a limit of Lemma~\ref{lem:scdual} in diagonal quadratic norms $\norm{\cdot}$ where all but one coordinate tends to $\infty$ (and in the dual, only one coordinate is nonzero), which captures coordinate smoothness. We provide a more direct proof for completeness.

\begin{lemma}[Coordinate strong convexity of the dual]
	\label{lem:coordsmoothsc}
	For a convex function $f$ which is $L_i$-smooth in the $i^{th}$ coordinate, we have
	\begin{align*}
	f(y) \geq f(x) + \inprod{\nabla f(x)}{y - x} + \frac{1}{2L_i} \left|\nabla_i f(y) - \nabla_i f(x)\right|^2.
	\end{align*}
\end{lemma}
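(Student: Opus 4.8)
The plan is to follow the same strategy as the proof of Lemma~\ref{lem:scdual}, but replace the global smoothness argument with a one-dimensional one along the coordinate direction $e_i$. The target inequality only involves the $i$-th coordinate of the gradient difference, so the ``witness point'' $z$ should move from $y$ only in direction $e_i$.

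First I would introduce the scalar $\delta \defeq \nabla_i f(y) - \nabla_i f(x)$ and define $z \defeq y - \frac{\delta}{L_i} e_i$. Then I would compute directly that $\inprod{\nabla_i f(y) - \nabla_i f(x)}{(z-y)_i} + \frac{L_i}{2}(z - y)_i^2 = -\delta \cdot \frac{\delta}{L_i} + \frac{L_i}{2}\cdot\frac{\delta^2}{L_i^2} = -\frac{\delta^2}{2L_i}$; this is the analog of the Cauchy–Schwarz equality case in Lemma~\ref{lem:scdual}, except here it is just completing the square in one variable, so it is exact without any norm inequality. Since $z - y$ is a multiple of $e_i$, coordinate $L_i$-smoothness gives $f(z) \le f(y) + \nabla_i f(y)(z - y)_i + \frac{L_i}{2}(z-y)_i^2$, and convexity of $f$ gives $f(z) \ge f(x) + \inprod{\nabla f(x)}{z - x}$.

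Then I would chain these exactly as in Lemma~\ref{lem:scdual}: from $0 \le f(z) - f(x) - \inprod{\nabla f(x)}{z - x}$, substitute the smoothness upper bound on $f(z)$, split $\inprod{\nabla f(x)}{z - x} = \inprod{\nabla f(x)}{y - x} + \nabla_i f(x)(z - y)_i$, regroup to get $f(y) - f(x) - \inprod{\nabla f(x)}{y - x} + \inprod{\nabla_i f(y) - \nabla_i f(x)}{(z - y)_i} + \frac{L_i}{2}(z - y)_i^2$, and finally plug in the computed value $-\frac{\delta^2}{2L_i} = -\frac{1}{2L_i}|\nabla_i f(y) - \nabla_i f(x)|^2$ of the last two terms. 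Rearranging yields the claimed bound.

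I do not expect a genuine obstacle here — the only thing to be careful about is that we only get to use smoothness in the direction $e_i$, which is exactly why we choose $z$ to differ from $y$ only in that coordinate; the rest is a routine one-line completion of the square plus the convexity inequality, with no need to invoke $L$-smoothness in any other direction (indeed no hypothesis on $L_j$ for $j \neq i$ is used). One could alternatively obtain this as a limiting case of Lemma~\ref{lem:scdual} with a diagonal quadratic norm sending all coordinates but $i$ to infinity, as the surrounding text notes, but the direct argument above is cleaner to write out.
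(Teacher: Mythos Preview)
Your proposal is correct and is essentially identical to the paper's proof: the paper defines $z$ as the argmin over $w = y + te_i$ of $\inprod{\nabla f(y) - \nabla f(x)}{w - y} + \tfrac{L_i}{2}\norm{w - y}^2$, which is exactly your explicit choice $z = y - \tfrac{\delta}{L_i}e_i$, and then chains convexity and coordinate smoothness in the same way you describe. The only cosmetic difference is that you write out the minimizer and the completing-the-square computation explicitly rather than invoking an argmin and the Cauchy--Schwarz equality case.
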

\begin{proof}
The proof is essentially the same as that of Lemma~\ref{lem:scdual}, with a tighter guarantee given by coordinate smoothness. Let $\norm{\cdot}$ be the $\ell_2$ norm, and
\begin{align*}
z = \textrm{argmin}_{w: w = y + te_i} \inprod{\nabla f(y) - \nabla f(x)}{w - y} + \frac{L_i}{2} \norm{w - y}^2
\end{align*}
where we note that by the equality case of Cauchy-Schwarz, and as $y$ and $z$ only differ in the $i^{th}$ coordinate, we have
\begin{align*}
\inprod{\nabla f(y) - \nabla f(x)}{z - y} + \frac{L_i}{2} \norm{z - y}^2 = -\frac{1}{2L_i} \left|\nabla_i f(y) - \nabla_i f(x)\right|^2.
\end{align*}
Then, the proof follows:
\begin{align*}
0 &\leq f(z) - f(x) - \inprod{\nabla f(x)}{z - x} \\
&\leq f(y) + \inprod{\nabla f(y)}{z - y} + \frac{L_i}{2} \norm{z - y}^2 - f(x) - \inprod{\nabla f(x)}{z - x} \\
& = f(y) - f(x) - \inprod{\nabla f(x)}{y-x}+ \inprod{\nabla f(y) - \nabla f(x)}{z - y} + \frac{L_i}{2} \norm{z - y}^2 \\
&= f(y) - f(x) - \inprod{\nabla f(x)}{y - x} -\frac{1}{2L_i} \left|\nabla_i f(y) - \nabla_i f(x)\right|^2
\end{align*}
where in the last line, we used the definition of $z$. This proves the desired claim.
\end{proof}

The following property of the Bregman divergence is well-known.

\begin{restatable}{lemma}{restateThreePoint}
	\label{lem:threepoint}
	For any function $r$, $\inprod{-\nabla V^r_a(b)}{b - c} = V^r_a(c) - V^r_b(c) - V^r_a(b)$, $\forall a, b, c$.
\end{restatable}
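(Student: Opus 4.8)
The statement to prove is the three-point identity
\[\inprod{-\nabla V^r_a(b)}{b - c} = V^r_a(c) - V^r_b(c) - V^r_a(b), \quad \forall a, b, c,\]
and the plan is simply to expand both sides using the definition $V^r_a(b) = r(b) - r(a) - \inprod{\nabla r(a)}{b - a}$ and check that everything cancels. First I would compute $\nabla V^r_a(b)$: differentiating the definition with respect to $b$ (holding $a$ fixed) gives $\nabla V^r_a(b) = \nabla r(b) - \nabla r(a)$, since the term $-\inprod{\nabla r(a)}{b-a}$ contributes $-\nabla r(a)$ and $r(b)$ contributes $\nabla r(b)$. Hence the left-hand side is $\inprod{\nabla r(a) - \nabla r(b)}{b - c}$.

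Next I would write out the right-hand side term by term. We have $V^r_a(c) = r(c) - r(a) - \inprod{\nabla r(a)}{c - a}$, $V^r_b(c) = r(c) - r(b) - \inprod{\nabla r(b)}{c - b}$, and $V^r_a(b) = r(b) - r(a) - \inprod{\nabla r(a)}{b - a}$. Subtracting, the $r(c)$ terms cancel, the $r(a)$ terms cancel, and the $r(b)$ terms cancel, leaving
\[V^r_a(c) - V^r_b(c) - V^r_a(b) = -\inprod{\nabla r(a)}{c - a} + \inprod{\nabla r(b)}{c - b} + \inprod{\nabla r(a)}{b - a}.\]
Combining the two $\nabla r(a)$ terms gives $\inprod{\nabla r(a)}{(b - a) - (c - a)} = \inprod{\nabla r(a)}{b - c}$, and the $\nabla r(b)$ term is $\inprod{\nabla r(b)}{c - b} = -\inprod{\nabla r(b)}{b - c}$. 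So the right-hand side equals $\inprod{\nabla r(a) - \nabla r(b)}{b - c}$, matching the left-hand side computed above.

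There is essentially no obstacle here: this is a routine algebraic identity, and the only mild care needed is the sign bookkeeping in the gradient computation of $V^r_a(b)$ and in regrouping the inner products. The statement holds for any differentiable $r$ (no convexity is even needed), so no appeal to earlier lemmas is required — it is purely a matter of expanding definitions and collecting terms.
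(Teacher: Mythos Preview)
Your proposal is correct and follows essentially the same approach as the paper: both proofs simply expand the Bregman divergences via the definition, observe that $\nabla V^r_a(b) = \nabla r(b) - \nabla r(a)$, and verify that $V^r_a(c) - V^r_b(c) - V^r_a(b) = \inprod{\nabla r(a) - \nabla r(b)}{b - c}$. The paper's version is just the one-line compressed form of your more detailed expansion.
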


\begin{proof}
	It suffices to expand via the definition of the Bregman divergence,
	\begin{align*}
	V^r_a(c) - V^r_b(c) - V^r_a(b)	= \inprod{\nabla r(a) - \nabla r(b)}{b - c} = \inprod{-\nabla V^r_a(b)}{b - c}.
	\end{align*}
\end{proof}

As a corollary, we obtain the following.

\begin{restatable}{lemma}{restateProxMinimizer}
	\label{lem:proxprop}
	For $w = \textup{Prox}^r_z(g)$, $\forall u$, $\inprod{g}{w - u} \leq V^r_z(u) - V^r_w(u) - V^r_z(w)$.
\end{restatable}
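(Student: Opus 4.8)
The plan is to combine the first-order optimality condition defining the proximal step with the three-point identity of Lemma~\ref{lem:threepoint}. By definition, $w = \textup{Prox}^r_z(g) = \argmin_y \{\inprod{g}{y} + V^r_z(y)\}$, and the objective $y \mapsto \inprod{g}{y} + V^r_z(y)$ is convex with gradient $g + \nabla V^r_z(y)$ (here $\nabla V^r_z(y) = \nabla r(y) - \nabla r(z)$). First I would write down the first-order optimality condition at the minimizer $w$: for every $u$ in the domain $\zset$,
\[
\inprod{g + \nabla V^r_z(w)}{w - u} \le 0,
\]
which in the unconstrained case is just $g + \nabla V^r_z(w) = 0$ and in the constrained case follows from the standard variational inequality characterizing a constrained minimizer of a convex function.

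Rearranging this gives $\inprod{g}{w - u} \le \inprod{-\nabla V^r_z(w)}{w - u}$. Then I would invoke Lemma~\ref{lem:threepoint} with the substitution $a = z$, $b = w$, $c = u$, which yields exactly
\[
\inprod{-\nabla V^r_z(w)}{w - u} = V^r_z(u) - V^r_w(u) - V^r_z(w).
\]
Chaining the two displays gives the claimed inequality $\inprod{g}{w - u} \le V^r_z(u) - V^r_w(u) - V^r_z(w)$, completing the proof.

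There is essentially no hard step here; the only point requiring a word of care is the justification of the optimality condition when $\zset \neq \R^d$ (so that $w$ may lie on the boundary), where one should cite the first-order condition for constrained convex minimization rather than setting the gradient to zero. I would state it in the constrained form from the outset so the lemma applies in the generality used elsewhere in the paper (e.g.\ in Proposition~\ref{prop:mirrorprox}, where \eqref{eq:foopt} is obtained precisely by two applications of this lemma).
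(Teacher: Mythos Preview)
Your proposal is correct and matches the paper's own proof essentially verbatim: the paper also writes the first-order optimality condition $\inprod{g + \nabla V^r_z(w)}{u - w} \ge 0$, rearranges, and then applies Lemma~\ref{lem:threepoint} (the three-point identity) to conclude. Your additional remark about handling the constrained case is a nice clarification but does not change the argument.
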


\begin{proof}
	By first order optimality of the function $\inprod{g}{w} + V^r_z(w)$ as a function of $w$, we have
	\begin{align*}
	\inprod{g + \nabla V^r_z(w)}{u - w} \geq 0 \Rightarrow \inprod{g}{w - u} \leq \inprod{- \nabla V^r_z(w)}{w - u} = V^r_z(u) - V^r_w(u) - V^r_z(w),
	\end{align*}
	where we used the previous lemma.
\end{proof}

\restatebasiclam*
\begin{proof}
	By Cauchy-Schwarz, Lipschitzness of $g$, and strong convexity of $r$,
	\begin{align*}
	\inprod{g(w) - g(z)}{w - u} &\le \norm{g(w) - g(z)}_*\norm{w - u} \le L\norm{w - z} \norm{w - u} \\
	&\le \frac{L}{2}\Par{\norm{w - z}^2 + \norm{w - u}^2} \le \frac{L}{\mu}\left(V^r_z(w) + V^r_w(u)\right).
	\end{align*}
\end{proof}

\restaterelsmooth*
\begin{proof}
	By assumption of relative smoothness of $f$ and the definition of divergence,
	\begin{align*}
	L \left( V^r_z(w) + V_w^r(u) \right) 
	&\geq V^f_z(w) + V_w^f(u) \\
	&= f(w) - \left[ f(z) + \nabla f(z)^\top (w - z) \right]
	+ f(u) - \left[ f(w) + \nabla f(w)^\top (u - w)  \right] \\
	&= V_z^f(u) - \nabla f(z)^\top (z - u) - \nabla f(z)^\top(w - z) + \nabla f(w)^\top (w - u) \\
	&= V_z^f(u) + \langle g(w) - g(z) , u - z \rangle ~.
	\end{align*}
	The result follows from the fact that $ V_z^f(u) \geq 0$ by convexity of $f$.
\end{proof} 	%

\section{Unaccelerated smooth convex optimization via mirror prox}
\label{sec:naive_smooth_rate}

For completeness, we provide here the proof that the $1/T$ rate of extragradient methods for solving Lipschitz monotone VIs also yields a $1/T$ rate for smooth function minimization. Given a smooth, convex $f$ this rate is achieved by applying these methods to solve the VI induced by $g(x) \defeq \nabla f(x)$. Consequently, this section highlights that the accelerated rates we achieve for minimizing $f$ are via working in the expanded primal-dual space of the VI induced by the Fenchel game, as well as our fine-grained notion of relative Lipschitzness, as considered in \Cref{sec:acceleration}.

\begin{lemma}
For $L$-smooth, convex $f : \R^d \rightarrow \R$ define $g : \R^d \rightarrow \R$ by $g(x) \defeq \nabla f(x)$, and for arbitrary $x_0$ let $r(x) \defeq \frac{1}{2} \norm{x_0 - x}_2^2$. For all $T \geq 0$, both $\textsc{Mirror-Prox}(x_0, T)$ (\Cref{alg:mp}) and 
$\textsc{Dual-Ex}(x_0, T)$ (\Cref{alg:dualex}) produce $\{w_t\}_{0 \le t < T}$ such that for any $x^*$ minimizing $f$,
\[
f \left(  \frac{1}{T} \sum_{0 \leq t < T} w_t \right) - f(x^*)
\leq \frac{L \norm{x_0 - x^*}_2^2}{2T}.
\]
\end{lemma}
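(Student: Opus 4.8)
The plan is to instantiate the relative Lipschitzness framework of Section~\ref{sec:overview} with the operator $g=\nabla f$ and the Euclidean regularizer $r(x)=\tfrac12\norm{x_0-x}_2^2$, and then convert the resulting regret bound into a function-value bound by convexity. First I would note that $g=\nabla f$ is $L$-Lipschitz in $\norm{\cdot}_2$ by smoothness, and $r$ is $1$-strongly convex in $\norm{\cdot}_2$, so by Lemma~\ref{lem:basiclam} the operator $g$ is $L$-relatively Lipschitz with respect to $r$ (here $\lam=L$). Hence Proposition~\ref{prop:mirrorprox} applies and gives, for every $u\in\R^d$,
\[
\sum_{0\le t<T}\inprod{\nabla f(w_t)}{w_t-u}\le \lam V^r_{x_0}(u)=\frac{L}{2}\norm{x_0-u}_2^2.
\]
(The analogous statement for dual extrapolation is deferred to Appendix~\ref{appendix:dual_extrapolation}, but it yields the same right-hand side since it is the ``lazy'' version of mirror prox with the same regularizer and relative Lipschitzness parameter.)

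Next I would take $u=x^*$ and lower-bound the left-hand side using convexity of $f$: each term satisfies $\inprod{\nabla f(w_t)}{w_t-x^*}\ge f(w_t)-f(x^*)$, so
\[
\sum_{0\le t<T}\bigl(f(w_t)-f(x^*)\bigr)\le \frac{L}{2}\norm{x_0-x^*}_2^2.
\]
Finally, dividing by $T$ and applying Jensen's inequality (convexity of $f$ once more) to the average iterate $\bar w=\tfrac1T\sum_{0\le t<T}w_t$ gives
\[
f(\bar w)-f(x^*)\le \frac1T\sum_{0\le t<T}\bigl(f(w_t)-f(x^*)\bigr)\le \frac{L\norm{x_0-x^*}_2^2}{2T},
\]
which is exactly the claimed bound.

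There is no real obstacle here: the statement is essentially a corollary of Proposition~\ref{prop:mirrorprox} together with Lemma~\ref{lem:basiclam} and two invocations of convexity. The only point requiring a word of care is making sure the relative Lipschitzness constant is exactly $L$ (not $L/\mu$ with an extraneous factor) — this follows because the regularizer is chosen with strong convexity modulus exactly $1$ — and that the same argument transfers verbatim to $\textsc{Dual-Ex}$, which I would handle by citing the corresponding proposition in Appendix~\ref{appendix:dual_extrapolation}. The contrast emphasized in the surrounding text is that this gives only the unaccelerated $O(1/T)$ rate, whereas working in the lifted primal–dual space of the Fenchel game (Section~\ref{sec:acceleration}) with the sharper relative Lipschitzness bound of Lemma~\ref{lem:betterlambda} improves this to $O(1/T^2)$.
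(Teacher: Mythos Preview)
Your proof is correct and follows essentially the same approach as the paper: invoke Lemma~\ref{lem:basiclam} to get $L$-relative Lipschitzness, apply Proposition~\ref{prop:mirrorprox} (and its dual-extrapolation analogue Proposition~\ref{prop:dualex}) to obtain the regret bound with right-hand side $L V^r_{x_0}(x^*)=\tfrac{L}{2}\norm{x_0-x^*}_2^2$, and then use convexity twice to pass to the averaged iterate. The only detail the paper makes slightly more explicit is that $x_0=\argmin_{x}r(x)$, which ensures the initialization of $\textsc{Dual-Ex}$ is consistent with the hypothesis of Proposition~\ref{prop:dualex}.
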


\begin{proof}
Since $x_0 = \argmin_{x \in \R^d} r(x)$, and since (by \Cref{lem:basiclam} and smoothness) $g$ is $L$-relatively Lipschitz with respect to $r$, \Cref{prop:mirrorprox} and \Cref{prop:dualex} imply
\[
\sum_{0 \leq t < T} 
\langle g(w_t), w_t - u \rangle \leq L V_{x_0}^r(u).
\]
Consequently, applying convexity and letting $u = x^*$ above yields
\begin{align*}
f \left(  \frac{1}{T} \sum_{0 \leq t < T} w_t \right) - f(x_*)
&\leq 
\frac{1}{T} \sum_{0 \leq t < T} 
\Par{
f(w_t) - f(x^*)} \leq \frac{1}{T}
\sum_{0 \leq t < T} 
\langle g(w_t), w_t - x^*\rangle 
\leq \frac{LV_{x_0}^r(x^*) }{T} 
\\
&= \frac{L \norm{x_0 - x^*}_2^2}{2T}.
\end{align*}
\end{proof}
We remark that for $\mu$-strongly convex functions, it is well-known that this also implies a $O(\tfrac L \mu)$ rate of linear convergence to an approximate minimizer by converting function error back to a bound on the squared distance to $x^*$ via applying strong convexity. 	%
\section{Minimax optimization}
\label{app:minimax}

In this section, we give a new fine-grained complexity bound of minimax optimization under blockwise strong convexity and smoothnesses of the problem. Specifically, consider the problem
\begin{equation}\label{eq:minimax}\min_{x \in \R^d} \max_{y \in \R^d} f(x, y),\end{equation}
and assume that the objective satisfies the following bounds\footnote{Here, $\norm{\cdot}_{\text{op}}$ is the $\ell_2$ operator norm.}:
\begin{enumerate}
	\item $\norm{\nabla^2_{xx} f(x, y)}_{\text{op}} \le \lxx$, $\norm{\nabla^2_{xy} f(x, y)}_{\text{op}} \le \lxy$, $\norm{\nabla^2_{yy} f(x, y)}_{\text{op}} \le \lyy$ everywhere
	\item $f(x, \cdot)$ is $\muy$-strongly concave for each $x$, and $f(\cdot, y)$ is $\mux$-strongly convex for each $y$
\end{enumerate}
In order to achieve an $\eps$-approximate saddle point to \eqref{eq:minimax}, recent work \cite{ZhangHZ19} shows that
\[\Omega\Par{\sqrt{\frac{\lxx}{\mux} + \frac{\lxy^2}{\mux\muy} + \frac{\lyy}{\muy}} \cdot \log\frac{1}{\eps}}\]
queries to a gradient oracle for \eqref{eq:minimax} are necessary. An upper bound of $\tO\Par{L/\sqrt{\mux\muy}}$ queries was achieved by \cite{LinJJ20}, where $\tO$ hides a polylogarithmic factor in problem parameters, and $L \defeq \max\Par{\lxx, \lxy, \lyy}$. We show an upper bound of 
\[O\Par{\Par{\frac{\lxx}{\mux} + \sqrt{\frac{\lxy^2}{\mux\muy}} + \frac{\lyy}{\muy}} \cdot \log\frac{1}{\eps}}\]
queries via a simple application of our relative Lipschitzness framework, and a known strongly monotone variant of Algorithm~\ref{alg:mp}, for obtaining $\eps$ Euclidean distance to the saddle point of \eqref{eq:minimax}, improving upon the bound of \cite{LinJJ20} in some cases. This can be converted into a duality gap bound while only increasing the logarithmic term by problem parameters; see the end of this section for a discussion. We will use Algorithm~\ref{alg:mp-sm} and a tightening of its guarantees following Definition~\ref{def:spc}.
\begin{algorithm}
	\caption{$\textsc{Mirror-Prox-SM}(z_0, T)$: Strongly monotone mirror prox \cite{CarmonJST19}}
	\begin{algorithmic}\label{alg:mp-sm}
		\STATE \textbf{Input:} Distance generating $r$, $\lam$-relatively Lipschitz, $m$-strongly monotone $g$, initial point $z_0$
		\FOR {$0 \le t < T$}
		\STATE $w_t \gets \Prox^r_{z_t}(\tfrac{1}{\lam}g(z_t))$
		\STATE $z_{t + 1} \gets \argmin_z \{\inprod{\tfrac{1}{\lam}g(w_t)}{z} + V^r_{z_t}(z) + \tfrac{m}{\lam}V^r_{w_t}(z)\}$
		\ENDFOR
	\end{algorithmic}
\end{algorithm}

\begin{restatable}{proposition}{restatempsm}
	\label{prop:mirrorprox-sm}
	The iterates $\{w_t\}$ of Algorithm~\ref{alg:mp-sm} satisfy, for $z^*$ the solution of the VI in $g$, 
	\[V_{z_T}(z^*) \le \Par{1 + \frac{m}{\lam}}^{-T}V^r_{z_0}(z^*).\]
\end{restatable}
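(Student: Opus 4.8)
The plan is to mirror the proof of Proposition~\ref{prop:mirrorprox}, tracking the effect of the extra term $\tfrac{m}{\lam}V^r_{w_t}(z)$ in the update defining $z_{t+1}$, and then to exploit strong monotonicity and the strong-solution property of $z^*$ to convert the telescoping inequality into a geometric contraction. First I would record the two first-order optimality conditions. Since $w_t = \Prox^r_{z_t}(\tfrac{1}{\lam}g(z_t))$ exactly as in \Cref{alg:mp}, \Cref{lem:proxprop} with test point $z_{t+1}$ gives
\[
\tfrac{1}{\lam}\inprod{g(z_t)}{w_t - z_{t+1}} \le V^r_{z_t}(z_{t+1}) - V^r_{w_t}(z_{t+1}) - V^r_{z_t}(w_t).
\]
For $z_{t+1}$, first-order optimality of $\inprod{\tfrac{1}{\lam} g(w_t)}{z} + V^r_{z_t}(z) + \tfrac{m}{\lam} V^r_{w_t}(z)$ combined with the three-point identity \Cref{lem:threepoint} applied to each divergence yields, for arbitrary $u \in \zset$,
\[
\tfrac{1}{\lam}\inprod{g(w_t)}{z_{t+1} - u} \le \Par{V^r_{z_t}(u) - V^r_{z_{t+1}}(u) - V^r_{z_t}(z_{t+1})} + \tfrac{m}{\lam}\Par{V^r_{w_t}(u) - V^r_{z_{t+1}}(u) - V^r_{w_t}(z_{t+1})}.
\]

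Next I would add the two bounds, expand $\inprod{g(w_t)}{w_t - u} = \inprod{g(w_t) - g(z_t)}{w_t - z_{t+1}} + \inprod{g(z_t)}{w_t - z_{t+1}} + \inprod{g(w_t)}{z_{t+1} - u}$, and invoke relative Lipschitzness (Definition~\ref{def:spc}) in the form $\tfrac{1}{\lam}\inprod{g(w_t) - g(z_t)}{w_t - z_{t+1}} \le V^r_{z_t}(w_t) + V^r_{w_t}(z_{t+1})$. The $V^r_{z_t}(z_{t+1})$ terms cancel, and the $V^r_{z_t}(w_t)$, $V^r_{w_t}(z_{t+1})$ produced by relative Lipschitzness cancel their negative counterparts, leaving the exact analogue of \eqref{eq:telescope} with a strongly monotone correction:
\[
\tfrac{1}{\lam}\inprod{g(w_t)}{w_t - u} \le V^r_{z_t}(u) - V^r_{z_{t+1}}(u) + \tfrac{m}{\lam}\Par{V^r_{w_t}(u) - V^r_{z_{t+1}}(u) - V^r_{w_t}(z_{t+1})}.
\]

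Finally I would set $u = z^*$. Since $z^*$ solves the VI, $\inprod{g(z^*)}{w_t - z^*} \ge 0$, so $m$-strong monotonicity with respect to $r$ gives $\inprod{g(w_t)}{w_t - z^*} \ge \inprod{g(z^*)}{w_t - z^*} + m V^r_{w_t}(z^*) \ge m V^r_{w_t}(z^*)$, discarding the nonnegative $V^r_{z^*}(w_t)$. Plugging this lower bound into the left side, discarding $-\tfrac{m}{\lam}V^r_{w_t}(z_{t+1}) \le 0$ on the right, and cancelling the common $\tfrac{m}{\lam} V^r_{w_t}(z^*)$ terms leaves $\Par{1 + \tfrac{m}{\lam}} V^r_{z_{t+1}}(z^*) \le V^r_{z_t}(z^*)$; iterating from $t = 0$ to $T-1$ gives the claim. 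The proof is essentially routine bookkeeping; the main thing to be careful about is that the three divergence cancellations line up so that the extra $V^r_{w_t}$ in the prox step produces \emph{exactly} a factor $(1 + m/\lam)$, and that the strong-solution inequality is combined correctly with $m$-strong monotonicity to obtain the clean bound $\inprod{g(w_t)}{w_t - z^*} \ge m V^r_{w_t}(z^*)$ — this is what makes the $V^r_{w_t}(z^*)$ terms cancel across the two sides.
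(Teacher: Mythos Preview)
Your proposal is correct and is essentially the same argument as the paper's proof: both derive the two first-order optimality inequalities (with the extra $\tfrac{m}{\lam}V^r_{w_t}$ term appearing via the three-point identity), combine them using relative Lipschitzness to cancel the $V^r_{z_t}(w_t)$ and $V^r_{w_t}(z_{t+1})$ terms, and then invoke the strong-solution property of $z^*$ together with $m$-strong monotonicity to obtain $\inprod{g(w_t)}{w_t - z^*} \ge m V^r_{w_t}(z^*)$, yielding the contraction $(1+\tfrac{m}{\lam})V^r_{z_{t+1}}(z^*) \le V^r_{z_t}(z^*)$. The only cosmetic differences are that you keep the term $-\tfrac{m}{\lam}V^r_{w_t}(z_{t+1})$ a step longer before discarding it and write the intermediate inequality for general $u$ before specializing to $z^*$.
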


Proposition~\ref{prop:mirrorprox-sm} is a strengthening of a known derivation (see e.g.\ Proposition 5 of \cite{CarmonJST19}) under relative Lipschitzness. We defer its proof to Appendix~\ref{appendix:dual_extrapolation}. Our claimed upper bound follows by combining Proposition~\ref{prop:mirrorprox-sm} with the following relative Lipschitzness bound. We note that the same rate can be obtained by directly combining Proposition~\ref{prop:mirrorprox-sm} with a rescaling of the space to make the objective $1$-strongly convex in each variable, but we include a proof via relative Lipschitzness because it makes the calculation more mechanical (and also originally motivated this observation).

\begin{restatable}{lemma}{restatelxymuxy}
	Let $r(x, y) = \tfrac{\mux}{2}\norm{x}_2^2 + \tfrac{\muy}{2}\norm{y}_2^2$ and $g(x, y) = (\nabla_x f(x, y), -\nabla_y f(x, y))$. Then with respect to $r$, $g$ is $1$-strongly monotone and 
	\[\frac{\lxx}{\mux} + \sqrt{\frac{\lxy^2}{\mux\muy}} + \frac{\lyy}{\muy}\text{-relatively Lipschitz}.\]
\end{restatable}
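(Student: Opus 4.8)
Since $r$ is a separable quadratic, $\nabla r(x,y) = (\mux x, \muy y)$, so for any $w,z$ we have $V^r_w(z) + V^r_z(w) = \inprod{\nabla r(w) - \nabla r(z)}{w - z} = \mux\norm{w^x - z^x}_2^2 + \muy\norm{w^y - z^y}_2^2$. Hence $1$-strong monotonicity of $g$ with respect to $r$ is exactly the statement that $g - \nabla r$ is a monotone operator. I would observe that $g - \nabla r$ is precisely the gradient operator of $\tilde f(x,y) \defeq f(x,y) - \tfrac{\mux}{2}\norm{x}_2^2 + \tfrac{\muy}{2}\norm{y}_2^2$. By Assumption~2, $\tilde f(\cdot, y)$ is convex for each $y$ (a $\mux$-strongly convex function minus $\tfrac{\mux}{2}\norm{\cdot}_2^2$) and $\tilde f(x, \cdot)$ is concave for each $x$ (a $\muy$-strongly concave function plus $\tfrac{\muy}{2}\norm{\cdot}_2^2$); thus $\tilde f$ is convex-concave and its gradient operator is monotone, as recorded in Section~\ref{sec:definitions}. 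This gives the first claim.

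\textbf{Relative Lipschitzness, setup.} I would first turn Assumption~1 into blockwise Lipschitz bounds on the partial gradients: integrating $\nabla^2 f$ along the segment from $z$ to $w$ (or splitting the two coordinate blocks and using ordinary one-block Lipschitzness), together with $\norm{\nabla^2_{yx} f}_{\text{op}} = \norm{\nabla^2_{xy} f}_{\text{op}} \le \lxy$, yields
\[\norm{\nabla_x f(w) - \nabla_x f(z)}_2 \le \lxx\norm{w^x - z^x}_2 + \lxy\norm{w^y - z^y}_2,\qquad \norm{\nabla_y f(w) - \nabla_y f(z)}_2 \le \lxy\norm{w^x - z^x}_2 + \lyy\norm{w^y - z^y}_2.\]
Then for any triple $z, w, u$, expanding $g = (\nabla_x f, -\nabla_y f)$ blockwise and applying Cauchy--Schwarz (the sign in the $y$-block being immaterial) gives, with $a \defeq \norm{w^x - z^x}_2$, $b \defeq \norm{w^x - u^x}_2$, $c \defeq \norm{w^y - z^y}_2$, $d \defeq \norm{w^y - u^y}_2$,
\[\inprod{g(w) - g(z)}{w - u} \le \lxx\, ab + \lxy(cb + ad) + \lyy\, cd.\]
Since $r$ is a separable quadratic, $V^r_z(w) + V^r_w(u) = \tfrac{\mux}{2}(a^2 + b^2) + \tfrac{\muy}{2}(c^2 + d^2)$, so in view of Definition~\ref{def:spc} it suffices to bound the displayed right-hand side by $\lam\,(V^r_z(w) + V^r_w(u))$ with $\lam = \tfrac{\lxx}{\mux} + \sqrt{\tfrac{\lxy^2}{\mux\muy}} + \tfrac{\lyy}{\muy}$.

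\textbf{Finishing with Young's inequality.} I would close with three applications of Young's inequality: $ab \le \tfrac12(a^2 + b^2)$ bounds $\lxx\,ab$ by $\tfrac{\lxx}{\mux}\cdot\tfrac{\mux}{2}(a^2+b^2)$, and symmetrically $\lyy\,cd \le \tfrac{\lyy}{\muy}\cdot\tfrac{\muy}{2}(c^2+d^2)$; for the cross terms, using $cb \le \tfrac12(\sqrt{\muy/\mux}\,c^2 + \sqrt{\mux/\muy}\,b^2)$ and the analogous bound on $ad$ gives $\lxy(cb+ad) \le \tfrac{\lxy}{\sqrt{\mux\muy}}\bigl(\tfrac{\mux}{2}(a^2+b^2) + \tfrac{\muy}{2}(c^2+d^2)\bigr)$, where $\tfrac{\lxy}{\sqrt{\mux\muy}} = \sqrt{\lxy^2/(\mux\muy)}$. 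Summing the three bounds yields exactly $\lam\,(V^r_z(w) + V^r_w(u))$, as desired.

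I do not expect a genuine obstacle here; the only point needing a little care is choosing the weights in Young's inequality for the cross terms so that the $\mux,\muy$ asymmetry is absorbed with coefficient precisely $\sqrt{\lxy^2/(\mux\muy)}$ rather than something larger — this is what pins down the cross term in $\lam$. If one prefers not to assume twice differentiability, the blockwise partial-gradient Lipschitz bounds above may be taken directly as hypotheses, since they are implied by (and slightly weaker than) the stated Hessian bounds.
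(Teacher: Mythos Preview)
Your proof is correct, and the relative Lipschitzness argument is identical to the paper's: the paper also integrates the Hessian blocks to get the two blockwise Lipschitz inequalities, applies Cauchy--Schwarz to obtain the four-term bound $\lxx ab + \lxy(cb+ad) + \lyy cd$, and then splits into three Young's-inequality estimates exactly as you do (including the weighted Young bound with parameter $\sqrt{\mux/\muy}$ for the cross terms).

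The one place you differ is strong monotonicity. The paper writes $g(w)-g(z) = \int_0^1 \jac(z_t)(w-z)\,dt$ and uses antisymmetry of the off-diagonal blocks of $\jac$ so that only the diagonal blocks survive in $(w-z)^\top \jac(z_t)(w-z)$, then lower-bounds those by $\mux$ and $\muy$. You instead observe that $g-\nabla r$ is the gradient operator of the convex--concave function $\tilde f = f - \tfrac{\mux}{2}\norm{x}_2^2 + \tfrac{\muy}{2}\norm{y}_2^2$ and invoke monotonicity of gradient operators of convex--concave functions. Your route is slightly cleaner and, as you note, does not rely on twice differentiability; the paper's route is perhaps more explicit about where the cross terms cancel. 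Either way the conclusion is the same.
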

\begin{proof}
First, we prove strong monotonicity: let $z = (z\x, z\y)$ and $w = (w\x, w\y)$. Moreover, let $\jac$ be the Jacobian of the operator $g$, and note that
\[g(w) - g(z) = \int_0^1 \jac(z_t) (w - z)dt,\text{ where } z_t \defeq (1 - t)z + tw.\]
Thus, integrating and using antisymmetry of the off-diagonal blocks of $\jac$,
\begin{align*}
\inprod{g(w) - g(z)}{w - z} = \int_0^1 (w - z)^\top \jac(z_t)(w - z) dt\\
= \int_0^1 \Par{(w\x - z\x)^\top \nabla^2 f_{xx}(z_t) (w\x - z\x) - (w\y - z\y)^\top \nabla^2 f_{yy}(z_t) (w\y - z\y)}dt \\
\ge \int_0^1\Par{\mux \norm{w\x - z\x}_2^2 + \muy \norm{w\y - z\y}_2^2}dt = V^r_z(w) + V^r_w(z).
\end{align*}
In the only inequality, we used our strong convexity and strong concavity assumptions. Next, we prove the relative Lipschitz bound: consider three points $z$, $w$, and $u = (u\x, u\y)$. We have by triangle inequality and the assumed operator norm bounds
\begin{align*}
g\x(w) - g\x(z) = \int_0^1 \Par{\nabla^2_{xx}f(z_t)(w\x - z\x) + \nabla^2_{xy}f(z_t)(w\y - z\y)}dt\\
\implies \norm{g\x(w) - g\x(z)}_2 \le L_{xx} \norm{w\x - z\x}_2 + L_{xy}\norm{w\y - z\y}_2,\\
g\y(w) - g\y(z) = -\int_0^1 \Par{\nabla^2_{yx}f(z_t)(w\x - z\x) + \nabla^2_{yy}f(z_t)(w\y - z\y)}dt\\
\implies \norm{g\y(w) - g\y(z)}_2 \le L_{xy} \norm{w\x - z\x}_2 + L_{yy}\norm{w\y - z\y}_2.
\end{align*}
Therefore, it follows from Cauchy-Schwarz that
\begin{align*}\inprod{g(w) - g(z)}{w - u} \le L_{xx}\norm{w\x - z\x}_2\norm{w\x - u\x}_2 + L_{xy}\norm{w\y - z\y}_2\norm{w\x - u\x}_2 \\
+ L_{xy}\norm{w\x - z\x}_2\norm{w\y - u\y}_2 + L_{yy}\norm{w\y - z\y}_2\norm{w\y - u\y}_2.\end{align*}
Finally, denoting $r\x(x) \defeq \tfrac{\mux}{2}\norm{x}_2^2$, $r\y(y) \defeq \tfrac{\muy}{2}\norm{y}_2^2$, the conclusion of relative Lipschitzness follows from nonnegativity of divergences and combining the three bounds
\begin{align*}
L_{xx}\norm{w\x - z\x}_2\norm{w\x - u\x}_2 &\le \frac{\lxx}{\mux} \Par{V^{r\x}_{z\x}(w\x) + V^{r\x}_{w\x}(u\x)},\\
L_{yy}\norm{w\y - z\y}_2\norm{w\y - u\y}_2 &\le \frac{\lyy}{\muy} \Par{V^{r\y}_{z\y}(w\y) + V^{r\y}_{w\y}(u\y)},\\
L_{xy}\norm{w\y - z\y}_2\norm{w\x - u\x}_2 
+ L_{xy}\norm{w\x - z\x}_2\norm{w\y - u\y}_2 &\le \frac{\lxy}{\sqrt{\mux\muy}}\Par{V^r_z(w) + V^r_w(u)}.
\end{align*}
We prove the last bound here; the other two follow similarly. By expanding the definition of $r$,
\begin{align*}
\frac{\lxy}{\sqrt{\mux\muy}}\Par{V^r_z(w) + V^r_w(u)} &= \frac{\lxy}{\sqrt{\mux\muy}}\left(\frac{\mux}{2}\norm{w\x - z\x}_2^2 + \frac{\muy}{2}\norm{w\y - z\y}_2^2 \right.\\
&\left.+ \frac{\mux}{2}\norm{z_+\x - w\x}_2^2 + \frac{\muy}{2}\norm{z_+\y - w\y}_2^2\right) \\
&\ge \lxy \Par{\norm{w\x - z\x}_2 \norm{w\y - u\y}_2 + \norm{w\x - u\x}_2\norm{w\y - z\y}_2}.
\end{align*}
\end{proof}

We give a brief discussion regarding converting a distance bound from a pair $(x, y)$ to the saddle point $(x^*, y^*)$ of \eqref{eq:minimax} into a duality gap bound. Specifically, let $y'$ be the best response to $x$, and $x'$ be the best response to $y$. Further, denote 
\[h_1(x) \defeq \max_{y \in \R^d} f(x, y),\; h_2(y) \defeq \min_{x \in \R^d} f(x, y).\]
Note the duality gap of the pair $(x, y)$ is precisely
\begin{align*}f(x, y') - f(x', y) &= \Par{f(x, y') - f(x^*, y^*)} + \Par{f(x^*, y^*) - f(x', y)} \\&= \Par{h_1(x) - h_1(x^*)} + \Par{h_2(y^*) - h_2(y)}.\end{align*}
Denote $L = \max(\lxx, \lxy, \lyy)$. Under the setting of this section, it was shown in Lemma B.2 of \cite{LinJJ20} that $h_1$ is $\tfrac{2L^2}{\mux}$ smooth, which implies the bound (since $x^*$ minimizes $h_1$ by definition)
\[h_1(x) - h_1(x^*) \le \frac{L^2}{\mux}\norm{x - x^*}_2^2.\]
Consequently, we can convert a distance bound into a duality gap bound, while only affecting the logarithmic runtime term. A similar argument holds for terms corresponding to $h_2$. 	%
\section{Additional extragradient methods}
\label{appendix:dual_extrapolation}

\subsection{Strongly monotone mirror prox}

We give a proof of Proposition~\ref{prop:mirrorprox-sm}, restated here for convenience.
\restatempsm*
\begin{proof}
	As in \eqref{eq:foopt}, first-order optimality with respect to $z^*$ and nonnegativity of $V_{w_t}(z_{t + 1})$ implies
	\begin{align*}
	\frac{1}{\lam}\inprod{g(z_t)}{w_t - z_{t + 1}} &\leq V^r_{z_t}(z_{t + 1}) - V^r_{w_t}(z_{t + 1}) - V^r_{z_t}(w_t), \\
	\frac{1}{\lam}\inprod{g(w_t)}{z_{t + 1} - z^*} &\leq V^r_{z_t}(z^*) - V^r_{z_{t + 1}}(z^*) - V^r_{z_t}(z_{t + 1}) + \frac{m}{\lam}\Par{V^r_{w_t}(z^*) - V^r_{z_{t + 1}}(z^*)}.
	\end{align*}
	Rearranging and applying relative Lipschitzness (Definition~\ref{def:spc}) as in \eqref{eq:telescope} yields
	\begin{equation}
	\label{eq:linearrate}
	\Par{1 + \frac{m}{\lam}} V_{z_{t + 1}}(z^*) \le \frac{1}{\lam}\Par{\inprod{g(w_t)}{w_t - z^*} - mV_{w_t}(z^*)}+ \Par{1 + \frac{m}{\lam}} V_{z_{t + 1}}(z^*) \leq V_{z_t}(z^*).
	\end{equation}
	Here, we used the fact that by the definition of $z^*$ and strong monotonicity, 
	\[\inprod{g(w_t)}{w_t - z^*} - mV_{w_t}(z^*) \ge \inprod{g(w_t) - g(z^*)}{w_t - z^*} - mV_{w_t}(z^*) \ge 0.\]
	Thus, iterating the inequality \eqref{eq:linearrate} yields the conclusion.
\end{proof}

\subsection{Dual extrapolation}

In this section, we give a simplified presentation of the dual extrapolation algorithm of \cite{Nesterov07} for approximately solving a variational inequality in a monotone operator $g$, using a regularizer $r$. It obtains the same rate of convergence as the mirror prox algorithm, under relative Lipschitzness (Definition~\ref{def:spc}). Specifically, it is the following algorithm which updates a dual variable $s_t$ iteratively.

\begin{algorithm}
	\caption{$\textsc{Dual-Ex}(\bar{z}, T)$: Dual extrapolation \cite{Nesterov07}}
	\begin{algorithmic}\label{alg:dualex}
		\STATE \textbf{Input:} Distance generating $r$, $\lam$-relatively Lipschitz monotone $g: \zset \rightarrow \zset^*$, initial point $\bar{z} \in \zset$
		\STATE$s_0 \gets 0$
		\FOR {$0 \le t < T$}
		\STATE $z_t \gets \Prox^r_{\bar{z}}(s_t)$
		\STATE $w_t \gets \Prox^r_{z_t}(\tfrac{1}{\lam}g(z_t))$
		\STATE $s_{t + 1} \gets s_t + \tfrac{1}{\lam}g(w_t)$
		\ENDFOR
	\end{algorithmic}
\end{algorithm}

\begin{proposition}
	\label{prop:dualex}
	The iterates $\{w_t\}$ of Algorithm~\ref{alg:dualex} satisfy for all $u \in \zset$, 
	\[\sum_{0 \le t < T} \inprod{g(w_t)}{w_t - u} \le \lam V^r_{\bar{z}}(u).\]
\end{proposition}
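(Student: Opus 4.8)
# Proof Proposal for Proposition~\ref{prop:dualex}

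\textbf{Overall approach.} The plan is to mimic the proof of Proposition~\ref{prop:mirrorprox}, but accounting for the fact that dual extrapolation maintains a running dual state $s_t$ and recomputes $z_t$ from $\bar z$ at each step, rather than warm-starting the proximal step from $z_t$. The key structural fact I would exploit is that $s_t = \frac{1}{\lam}\sum_{0 \le \tau < t} g(w_\tau)$ by unrolling the recursion $s_{t+1} \gets s_t + \frac{1}{\lam}g(w_\tau)$, so that $z_t = \Prox^r_{\bar z}(s_t)$ is a single proximal step from the fixed base point $\bar z$ against the accumulated gradient. This is the ``lazy'' analog of the mirror descent state.

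\textbf{Key steps.} First, I would write down two first-order optimality inequalities via Lemma~\ref{lem:proxprop}, analogous to \eqref{eq:foopt}: one for the step $z_t \gets \Prox^r_{\bar z}(s_t)$ (note the running sum structure here makes the telescoping work), and one for $w_t \gets \Prox^r_{z_t}(\frac{1}{\lam}g(z_t))$. Actually, the cleaner route is to track the ``linearized gap'' potential: define a telescoping quantity using the optimality of $z_{t+1} = \Prox^r_{\bar z}(s_{t+1})$. Concretely, applying Lemma~\ref{lem:proxprop} to $z_{t+1}$ against an arbitrary $u$ gives
\[
\inprod{s_{t+1}}{z_{t+1} - u} \le V^r_{\bar z}(u) - V^r_{z_{t+1}}(u) - V^r_{\bar z}(z_{t+1}),
\]
and similarly for $z_t$ against $z_{t+1}$. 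Subtracting the $z_t$ inequality and using $s_{t+1} - s_t = \frac{1}{\lam}g(w_t)$, I would isolate $\frac{1}{\lam}\inprod{g(w_t)}{z_{t+1} - u}$ plus telescoping Bregman terms. Then, applying Lemma~\ref{lem:proxprop} to $w_t = \Prox^r_{z_t}(\frac{1}{\lam}g(z_t))$ against $z_{t+1}$ gives $\frac{1}{\lam}\inprod{g(z_t)}{w_t - z_{t+1}} \le V^r_{z_t}(z_{t+1}) - V^r_{w_t}(z_{t+1}) - V^r_{z_t}(w_t)$. Adding these, the cross term becomes $\frac{1}{\lam}\inprod{g(w_t)}{w_t - u}$ plus $\frac{1}{\lam}\inprod{g(w_t) - g(z_t)}{w_t - z_{t+1}}$, and the leftover Bregman terms $- V^r_{w_t}(z_{t+1}) - V^r_{z_t}(w_t)$, exactly as in \eqref{eq:telescope}. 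Invoking relative Lipschitzness (Definition~\ref{def:spc}) with the triple $(z_t, w_t, z_{t+1})$ cancels these against $\frac{1}{\lam}\inprod{g(w_t) - g(z_t)}{w_t - z_{t+1}}$, leaving $\frac{1}{\lam}\inprod{g(w_t)}{w_t - u}$ bounded by a telescoping difference of $V^r_{\bar z}(\cdot)$-divergences. Finally I would sum over $t$, telescope, use $s_0 = 0$ (so the $t=0$ term starts cleanly with $V^r_{\bar z}(u)$), drop the nonnegative $V^r_{z_T}(u)$ term, and multiply through by $\lam$.

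\textbf{Main obstacle.} The delicate part is getting the telescoping of the $z_t$-side optimality conditions to line up correctly: unlike mirror prox, where the proximal base point shifts each iteration, here the base point $\bar z$ is fixed but the dual offset $s_t$ grows, so one must be careful about which Bregman divergences appear with which sign and verify they genuinely telescope when summed. I expect the bookkeeping — confirming that the ``extra'' divergence terms left after the two prox inequalities are precisely those consumed by the relative Lipschitzness inequality, with nothing left over except the telescoping $V^r_{\bar z}$ terms — to be the crux. Everything else is routine, and in fact the proof should be nearly a line-for-line transcription of Proposition~\ref{prop:mirrorprox} once the dual-state relabeling is set up, which is presumably why the paper defers it to the appendix.
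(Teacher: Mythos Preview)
Your plan has a genuine gap, and it is precisely at the point you flagged as delicate. Applying Lemma~\ref{lem:proxprop} to $z_{t+1}=\Prox^r_{\bar z}(s_{t+1})$ against $u$ at \emph{each} iteration puts a fresh copy of $V^r_{\bar z}(u)$ on the right-hand side every time, because the base point is the fixed $\bar z$, not a moving $z_t$. After summing over $t$ you would have $T\cdot V^r_{\bar z}(u)$, not $V^r_{\bar z}(u)$. To recover the telescoping you expect (of the form $V^r_{z_t}(u)-V^r_{z_{t+1}}(u)$), you would need to \emph{subtract} the analogous inequality for $z_t$ against $u$, i.e.\ you would need a lower bound $\inprod{s_t}{z_t-u}\ge V^r_{\bar z}(u)-V^r_{z_t}(u)-V^r_{\bar z}(z_t)$; but Lemma~\ref{lem:proxprop} only gives the reverse inequality in the constrained setting. (In the unconstrained case the prox optimality condition is an equality and your argument does go through, but the proposition is stated for general $\zset$.)

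The paper sidesteps this by never comparing the per-iteration proximal steps against $u$. It uses exactly your ingredients (B), (C), and relative Lipschitzness, but combines them into a per-iteration bound against the fixed base $\bar z$:
\[
\tfrac{1}{\lam}\inprod{g(w_t)}{w_t-\bar z}\le \bigl[\inprod{s_{t+1}}{z_{t+1}-\bar z}+V^r_{\bar z}(z_{t+1})\bigr]-\bigl[\inprod{s_t}{z_t-\bar z}+V^r_{\bar z}(z_t)\bigr],
\]
which telescopes to bound $\sum_t\inprod{g(w_t)}{w_t-\bar z}$ by a potential $\Phi_T$. The comparison to $u$ is then made \emph{once}, at the very end, using that $z_T$ minimizes $z\mapsto \inprod{s_T}{z}+V^r_{\bar z}(z)$ over $\zset$ to swap $u$ for $z_T$ in the term $\inprod{s_T}{\bar z-u}-V^r_{\bar z}(u)$. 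So the missing idea is: telescope regret against $\bar z$, and bring in $u$ only through the single optimality of $z_T$.
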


\Cref{prop:dualex} requires the following helper lemma, the main tool in its proof.

\begin{lemma}
	\label{lem:helpdualex}
	For every iteration $t$,
	\begin{equation*}
	\frac{1}{\lam} \inprod{g(w_t)}{w_t - \bar{z}} \leq \inprod{s_{t + 1}}{z_{t + 1} - \bar{z}} + V^r_{\bar{z}}(z_{t + 1}) - \inprod{s_t}{z_t - \bar{z}} - V^r_{\bar{z}}(z_t).
	\end{equation*}
\end{lemma}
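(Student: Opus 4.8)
The plan is to reduce the claim to two applications of the proximal optimality inequality (Lemma~\ref{lem:proxprop}), glued together by the relative Lipschitzness hypothesis, mirroring the bookkeeping behind \eqref{eq:telescope} but adapted to the ``lazy'' dual state $s_t$.

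First I would establish a one-step bound: for every $u \in \zset$, $\tfrac{1}{\lam}\inprod{g(w_t)}{w_t - u} \le V^r_{z_t}(u)$. This is exactly the extrapolation estimate underlying mirror prox: write $\tfrac{1}{\lam}\inprod{g(w_t)}{w_t - u} = \tfrac{1}{\lam}\inprod{g(z_t)}{w_t - u} + \tfrac{1}{\lam}\inprod{g(w_t) - g(z_t)}{w_t - u}$, bound the first term by Lemma~\ref{lem:proxprop} applied to $w_t = \Prox^r_{z_t}(\tfrac{1}{\lam}g(z_t))$, and bound the second by Definition~\ref{def:spc} with the triple $(z_t, w_t, u)$. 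The divergence terms $\pm V^r_{z_t}(w_t)$ and $- V^r_{w_t}(u) + V^r_{w_t}(u)$ cancel, leaving just $V^r_{z_t}(u)$.

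Next I would use the update identity $\tfrac{1}{\lam}g(w_t) = s_{t+1} - s_t$ and split $w_t - \bar z = (w_t - z_{t+1}) + (z_{t+1} - \bar z)$. The one-step bound with $u = z_{t+1}$ handles the first piece, giving $\le V^r_{z_t}(z_{t+1})$, while the second piece is exactly $\inprod{s_{t+1}}{z_{t+1} - \bar z} - \inprod{s_t}{z_{t+1} - \bar z}$. It then remains to absorb $V^r_{z_t}(z_{t+1})$: applying Lemma~\ref{lem:proxprop} to the lazy step $z_t = \Prox^r_{\bar z}(s_t)$ with $u = z_{t+1}$ yields precisely $V^r_{z_t}(z_{t+1}) \le V^r_{\bar z}(z_{t+1}) - V^r_{\bar z}(z_t) + \inprod{s_t}{z_{t+1} - z_t}$. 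Substituting and regrouping all the $s_t$- and $s_{t+1}$-inner products (the cross terms collapse, using $\inprod{s_t}{z_{t+1} - z_t} - \inprod{s_t}{z_{t+1} - \bar z} = \inprod{s_t}{\bar z - z_t}$) gives exactly the right-hand side $\inprod{s_{t+1}}{z_{t+1} - \bar z} + V^r_{\bar z}(z_{t+1}) - \inprod{s_t}{z_t - \bar z} - V^r_{\bar z}(z_t)$.

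I expect no genuine obstacle here: relative Lipschitzness enters only once, in the one-step bound, exactly as in the proof of Proposition~\ref{prop:mirrorprox}. The only mildly delicate part is the bookkeeping — choosing $z_{t+1}$ as the comparison point $u$ in both proximal inequalities, and correctly tracking the $\bar z$ versus $z_{t+1}$ split so that the $s_t$-linear terms telescope into the stated potential difference.
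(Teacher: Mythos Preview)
Your proof is correct and follows essentially the same approach as the paper: both arguments use Lemma~\ref{lem:proxprop} applied to $z_t = \Prox^r_{\bar z}(s_t)$ and to $w_t = \Prox^r_{z_t}(\tfrac{1}{\lam}g(z_t))$ with comparison point $z_{t+1}$, together with relative Lipschitzness on the triple $(z_t, w_t, z_{t+1})$, followed by the same algebraic regrouping via $s_{t+1} = s_t + \tfrac{1}{\lam}g(w_t)$. The only difference is organizational: you package the second proximal inequality and relative Lipschitzness into a preliminary ``one-step bound'' $\tfrac{1}{\lam}\inprod{g(w_t)}{w_t - u} \le V^r_{z_t}(u)$ before specializing to $u = z_{t+1}$, whereas the paper writes out the three inequalities separately and then combines.
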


\begin{proof}
We again apply Lemma~\ref{lem:proxprop} on the two steps with respect to $z_{t + 1}$, yielding
\begin{align*}
&\inprod{s_t}{z_t - z_{t + 1}} \leq V^r_{\bar{z}}(z_{t + 1}) - V^r_{z_t}(z_{t + 1}) - V^r_{\bar{z}}(z_t),  \\
&\frac{1}{\lam} \inprod{g(z_t)}{w_t - z_{t + 1}} \leq V^r_{z_t}(z_{t + 1}) - V^r_{w_t}(z_{t + 1}) - V^r_{z_t}(w_t).
\end{align*}
Furthermore, note that by relative Lipschitzness, we have
\begin{align*}
\frac{1}{\lam}\inprod{g(w_t) - g(z_t)}{w_t - z_{t + 1}} \leq V^r_{w_t}(z_{t + 1}) + V^r_{z_t}(w_t).
\end{align*}
Combining these three inequalities and rearranging terms appropriately yields the conclusion:
\begin{align*}
\inprod{s_t}{z_t - z_{t + 1}} + \frac{1}{\lam}\inprod{g(w_t)}{w_t - z_{t + 1}} \leq V^r_{\bar{z}}(z_{t + 1}) - V^r_{\bar{z}}(z_t) \\
\implies\frac{1}{\lam}\inprod{g(w_t)}{w_t - \bar{z}} \leq \inprod{s_{t + 1}}{z_{t + 1} - \bar{z}} + V^r_{\bar{z}}(z_{t + 1}) - \inprod{s_t}{z_t - \bar{z}} - V^r_{\bar{z}}(z_t).
\end{align*}
\end{proof}

This immediately yields the following:
\begin{corollary}
	\label{corr:dualexhelper}
$\Phi_t = \frac{1}{\lam}\sum_{k = 0}^{t - 1} \inprod{g(w_k)}{w_k - \bar{z}} - \inprod{s_t}{z_t - \bar{z}} - V_{\bar{z}}(z_{t})$ is nonincreasing in $t$.
\end{corollary}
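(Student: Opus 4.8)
The plan is to derive Corollary~\ref{corr:dualexhelper} directly from Lemma~\ref{lem:helpdualex} by a one-step telescoping argument. Concretely, I would fix an arbitrary iteration index $t \ge 0$ and compute the increment $\Phi_{t+1} - \Phi_t$, showing it is nonpositive.

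First I would expand the definition of $\Phi_t$. Passing from $\Phi_t$ to $\Phi_{t+1}$, the running sum $\tfrac{1}{\lam}\sum_{k=0}^{t-1}\inprod{g(w_k)}{w_k - \bar{z}}$ acquires exactly the term $\tfrac{1}{\lam}\inprod{g(w_t)}{w_t - \bar{z}}$, while the trailing terms change from $-\inprod{s_t}{z_t-\bar z} - V^r_{\bar z}(z_t)$ to $-\inprod{s_{t+1}}{z_{t+1}-\bar z} - V^r_{\bar z}(z_{t+1})$. Hence
\[
\Phi_{t+1} - \Phi_t = \frac{1}{\lam}\inprod{g(w_t)}{w_t - \bar{z}} - \inprod{s_{t+1}}{z_{t+1}-\bar z} - V^r_{\bar z}(z_{t+1}) + \inprod{s_t}{z_t-\bar z} + V^r_{\bar z}(z_t).
\]
Then I would invoke Lemma~\ref{lem:helpdualex}, whose statement says precisely that $\tfrac{1}{\lam}\inprod{g(w_t)}{w_t - \bar{z}} \le \inprod{s_{t+1}}{z_{t+1}-\bar z} + V^r_{\bar z}(z_{t+1}) - \inprod{s_t}{z_t-\bar z} - V^r_{\bar z}(z_t)$, i.e. the right-hand side above is $\le 0$. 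This gives $\Phi_{t+1} \le \Phi_t$, so $\Phi_t$ is nonincreasing, as claimed.

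There is essentially no obstacle here: all the work is already done in establishing Lemma~\ref{lem:helpdualex} (which combines the proximal-step optimality inequalities of Lemma~\ref{lem:proxprop} with relative Lipschitzness, Definition~\ref{def:spc}), and the corollary is just its rearrangement. The one point I would note in passing, although it is not needed for the monotonicity claim itself, is the base case: since $s_0 = 0$ we have $z_0 = \Prox^r_{\bar z}(0) = \bar z$, so $\Phi_0 = 0$; together with monotonicity this yields $\Phi_t \le 0$ for all $t$, which is the form in which the statement will subsequently be used to extract the regret guarantee of Proposition~\ref{prop:dualex}.
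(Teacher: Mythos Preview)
Your proposal is correct and is exactly the immediate telescoping argument the paper intends; the paper does not even write out a proof, merely stating that Lemma~\ref{lem:helpdualex} ``immediately yields'' the corollary. Your added observation that $\Phi_0 = 0$ is also correct and is precisely what is used in the subsequent proof of Proposition~\ref{prop:dualex}.
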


\begin{proof}[Proof of Proposition~\ref{prop:dualex}]
Note that for any $u$, 
\begin{align*}
\sum_{t = 0}^{T - 1}\inprod{g(w_t)}{w_t - u} &= \sum_{t = 0}^{T - 1}\inprod{g(w_t)}{w_t - \bar{z}} + \sum_{t = 0}^{T - 1}\inprod{g(w_t)}{\bar{z} - u} + \left(\lam V_{\bar{z}}(u)-\lam V_{\bar{z}}(u)\right) \\
&\leq \sum_{t = 0}^{T - 1}\inprod{g(w_t)}{w_t - \bar{z}} + \sum_{t = 0}^{T - 1}\inprod{g(w_t)}{\bar{z} - z_T} + \left(\lam V_{\bar{z}}(u)-\lam V_{\bar{z}}(z_T)\right) \\
&= \lam\Phi_T + \lam V_{\bar{z}}(u) \leq \lam\Phi_0 + \lam V_{\bar{z}}(u)= \lam V_{\bar{z}}(u).
\end{align*}
The first inequality used the definition of $z_T$, and the second inequality used Corollary~\ref{corr:dualexhelper}.
\end{proof}
We note that all our acceleration results are also implementable with dual extrapolation as the base method, by analogous arguments as in Sections~\ref{sec:acceleration} and~\ref{sec:coordinate}. 	%

\section{Extragradient acceleration in non-Euclidean norms}
\label{appendix:generalize}

In this section, we generalize the developments of Section~\ref{sec:acceleration} to general norms, where $f$ is $L$-smooth in some $\norm{\cdot}$. The notion of strong convexity in general norms is slightly different; for acceleration to be achievable, $f$ must be $\mu$-strongly convex with respect to a regularizer $\omega$, where $\omega$ is $1$-strongly convex in $\norm{\cdot}$; see e.g.\ \cite{Zhu17} for a discussion. 

We first state our general strategy. Let $h(x) \defeq f(x) - \mu\omega(x)$, for all $x \in \R^d$; it is immediate that $h$ is convex and $L$-smooth in $\norm{\cdot}$. To solve the problem $\min_{x \in \R^d} f(x) = \min_{x \in \R^d} h(x) + \mu\omega(x)$, it instead suffices to solve the equivalent saddle point problem
\begin{equation}\label{eq:gennorm_spp} \min_{x \in \R^d} \max_{y \in \R^d} \mu\omega(x) + \inprod{y}{x} - h^*(y).\end{equation}
It is immediate that the saddle point of \eqref{eq:gennorm_spp} is $z^* \defeq (x^*, \nabla h(x^*))$. The key observation is that \eqref{eq:gennorm_spp} is strongly monotone and relatively Lipschitz (with an accelerated parameter) with respect to the natural choice of regularizer,
\begin{equation}\label{eq:rgennorm}r(x,y) \defeq \mu\omega(x) + h^*(y).\end{equation}
From this point, it suffices to apply the strongly monotone extragradient framework of Proposition~\ref{prop:mirrorprox-sm}. We now make this formal by giving the following two helper lemmata.

\begin{lemma}\label{lem:gennorm_sm}
Let $f: \R^d \rightarrow \R$ be $L$-smooth in norm $\norm{\cdot}$, and $\mu$-strongly convex with respect to $\omega$, a 1-strongly convex function in $\norm{\cdot}$. Let $h(x) \defeq f(x) - \mu\omega(x)$ $\forall x \in \R^d$, and let $g(x, y) = \Par{y + \mu\nabla\omega(x), \nabla h^*(y) - x}$ be the gradient operator of the objective \eqref{eq:gennorm_spp}. Then, $g$ is $1$-strongly monotone with respect to the distance generating function $r$ defined in \eqref{eq:rgennorm}.
\end{lemma}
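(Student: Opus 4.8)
By the definition of $m$-strong monotonicity with respect to a regularizer, it suffices to show that for all $z = (z\x, z\y)$ and $w = (w\x, w\y)$ in $\R^d \times \R^d$,
\[\inprod{g(w) - g(z)}{w - z} \ge V^r_w(z) + V^r_z(w) = \inprod{\nabla r(w) - \nabla r(z)}{w - z};\]
in fact I expect this to hold with equality, so the stated constant $m = 1$ is tight. The underlying reason is structural: the bilinear coupling $\inprod{y}{x}$ in \eqref{eq:gennorm_spp} contributes an antisymmetric (``rotational'') part to $g$ that is annihilated by the monotonicity inner product, so only the ``diagonal'' terms coming from $\mu\nabla\omega(x)$ and $\nabla h^*(y)$ survive, and these are exactly the gradients of the two blocks of $r$.

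\textbf{Step 1 (expand and cancel the cross terms).} Writing out the two blocks, $g(w) - g(z)$ has $x$-block $(w\y - z\y) + \mu(\nabla\omega(w\x) - \nabla\omega(z\x))$ and $y$-block $(\nabla h^*(w\y) - \nabla h^*(z\y)) - (w\x - z\x)$. Pairing against $w - z = (w\x - z\x,\, w\y - z\y)$, the terms $\inprod{w\y - z\y}{w\x - z\x}$ and $-\inprod{w\x - z\x}{w\y - z\y}$ cancel, leaving
\[\inprod{g(w) - g(z)}{w - z} = \mu\inprod{\nabla\omega(w\x) - \nabla\omega(z\x)}{w\x - z\x} + \inprod{\nabla h^*(w\y) - \nabla h^*(z\y)}{w\y - z\y}.\]

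\textbf{Step 2 (identify symmetrized Bregman divergences) and Step 3 (separability).} For any differentiable convex $\phi$ and any $a, b$, expanding definitions gives $\inprod{\nabla\phi(a) - \nabla\phi(b)}{a - b} = V^\phi_a(b) + V^\phi_b(a)$ (this is Lemma~\ref{lem:threepoint} with $c = a$). Applying it with $\phi = \omega$ and $\phi = h^*$ rewrites the right-hand side of Step 1 as $\mu(V^\omega_{w\x}(z\x) + V^\omega_{z\x}(w\x)) + (V^{h^*}_{w\y}(z\y) + V^{h^*}_{z\y}(w\y))$. Since $r(x,y) = \mu\omega(x) + h^*(y)$ splits across the two blocks, $V^r_w(z) + V^r_z(w)$ equals precisely this same expression, so $\inprod{g(w) - g(z)}{w - z} = V^r_w(z) + V^r_z(w)$, which is $1$-strong monotonicity.

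\textbf{Expected difficulty.} There is essentially no obstacle here: the only points needing care are checking that the bilinear cross terms cancel and invoking the elementary identity relating $\inprod{\nabla\phi(a) - \nabla\phi(b)}{a - b}$ to $V^\phi_a(b) + V^\phi_b(a)$. Everything comes out exactly, which is why the strong monotonicity parameter is precisely $1$; the substantive content of this appendix lies instead in the companion relative Lipschitzness bound and in combining both with Proposition~\ref{prop:mirrorprox-sm}.
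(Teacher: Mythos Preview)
Your proof is correct and follows essentially the same approach as the paper: both expand $\inprod{g(w)-g(z)}{w-z}$, cancel the antisymmetric bilinear cross terms, and identify what remains with $\inprod{\nabla r(w)-\nabla r(z)}{w-z} = V^r_w(z)+V^r_z(w)$, yielding an exact equality. The only cosmetic difference is that you route the final identification through the symmetrized Bregman identity $\inprod{\nabla\phi(a)-\nabla\phi(b)}{a-b} = V^\phi_a(b)+V^\phi_b(a)$ applied blockwise, whereas the paper simply observes the last line equals $\inprod{\nabla r(w)-\nabla r(z)}{w-z}$ directly.
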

\begin{proof}
By definition (cf.\ Section~\ref{sec:definitions}), it suffices to show that for all $w$, $z \in \R^d \times \R^d$,
\[\inprod{g(w) - g(z)}{w - z} \ge V^r_w(z) + V^r_z(w) = \inprod{\nabla r(w) - \nabla r(z)}{w - z}.\]
By direct expansion, this is an equality. In particular, for $w = (w\x, w\y)$ and $z = (z\x, z\y)$,
\begin{align*}
\inprod{g(w) - g(z)}{w - z} &= \inprod{\Par{w\y - z\y} + \mu\Par{\nabla\omega(w\x) - \nabla\omega(z\x)}}{w\x - z\x} \\
&+ \inprod{\Par{\nabla h^*(w\y) - \nabla h^*(z\y)}- \Par{w\x - z\x}}{w\y - z\y} \\
&= \mu\inprod{\nabla\omega(w\x) - \nabla\omega(z\x)}{w\x - z\x} + \inprod{\nabla h^*(w\y) - \nabla h^*(z\y)}{w\y - z\y} \\
&= \inprod{\nabla r(w) - \nabla r(z)}{w - z}.
\end{align*}
\end{proof}

\begin{lemma}\label{lem:gennorm_rl}
In the setting of Lemma~\ref{lem:gennorm_sm}, $g$ is $1 + \sqrt{\tfrac L \mu}$-relatively Lipschitz with respect to $r$.
\end{lemma}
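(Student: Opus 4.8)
The plan is to mirror the proof of Lemma~\ref{lem:betterlambda}, but split $g$ into its ``monotone part'' $\nabla r$ and a bilinear part, bounding each separately; handling the general norm only affects the bilinear part, and there only through replacing Cauchy--Schwarz with H\"older. Concretely, writing $z = (z\x, z\y)$, $w = (w\x, w\y)$, $u = (u\x, u\y)$ and using $\nabla r(x,y) = (\mu\nabla\omega(x), \nabla h^*(y))$ from \eqref{eq:rgennorm}, I would first expand
\[\inprod{g(w) - g(z)}{w - u} = \inprod{\nabla r(w) - \nabla r(z)}{w - u} + \inprod{w\y - z\y}{w\x - u\x} - \inprod{w\x - z\x}{w\y - u\y},\]
since the remaining components of $g$ (the $y$ in the $x$-block and the $-x$ in the $y$-block) are exactly the bilinear operator $(x,y)\mapsto(y,-x)$.

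For the first term I would apply the three-point identity (Lemma~\ref{lem:threepoint}), which gives $\inprod{\nabla r(w) - \nabla r(z)}{w - u} = V^r_z(w) + V^r_w(u) - V^r_z(u) \le V^r_z(w) + V^r_w(u)$ by nonnegativity of the Bregman divergence; equivalently this is Lemma~\ref{lem:rel_lipschitz} applied with $f = r$, as $r$ is trivially $1$-relatively smooth with respect to itself. Noticing that the $\mu\nabla\omega$ and $\nabla h^*$ components of $g$ assemble into $\nabla r$ --- so that together they cost only \emph{one} copy of $V^r_z(w) + V^r_w(u)$ rather than two --- is the single point where a naive termwise bound would lose a constant, and is the main thing to get right.

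For the bilinear term I would use the generalized Cauchy--Schwarz (H\"older) inequality with the norm $\norm{\cdot}$ and its dual to bound it by $\norm{w\y - z\y}_* \norm{w\x - u\x} + \norm{w\x - z\x} \norm{w\y - u\y}_*$, then apply Young's inequality with weight $\sqrt{L/\mu}$ exactly as in \eqref{eq:accelcs}. To control the resulting squared norms I would invoke two facts: that $\omega$ is $1$-strongly convex in $\norm{\cdot}$, so $V^r_z(w) \ge \mu V^\omega_{z\x}(w\x) \ge \thalf\mu\norm{w\x - z\x}^2$ (and symmetrically $V^r_w(u) \ge \thalf\mu\norm{u\x - w\x}^2$); and that $h = f - \mu\omega$ is convex and $L$-smooth in $\norm{\cdot}$ (as noted before Lemma~\ref{lem:gennorm_sm}), so by Lemma~\ref{lem:scdual} $h^*$ is $L^{-1}$-strongly convex in $\norm{\cdot}_*$, giving $V^r_z(w) \ge V^{h^*}_{z\y}(w\y) \ge \tfrac{1}{2L}\norm{w\y - z\y}_*^2$ and symmetrically for $V^r_w(u)$. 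This bounds the bilinear term by $\sqrt{L/\mu}\Par{V^r_z(w) + V^r_w(u)}$.

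Summing the two bounds yields $\inprod{g(w) - g(z)}{w - u} \le \Par{1 + \sqrt{L/\mu}}\Par{V^r_z(w) + V^r_w(u)}$, which is the claim by Definition~\ref{def:spc}. The only real obstacle is the bookkeeping in the decomposition step --- both isolating $g = \nabla r + (\text{bilinear})$ and recognizing $\nabla r$ is automatically $1$-relatively Lipschitz --- since everything afterward reduces to the H\"older/Young computation already carried out in the Euclidean case of Lemma~\ref{lem:betterlambda}.
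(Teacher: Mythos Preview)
Your proposal is correct and follows essentially the same approach as the paper: both decompose $g - \nabla r$ into the bilinear part $(y,-x)$, bound that by $\sqrt{L/\mu}\Par{V^r_z(w) + V^r_w(u)}$ via the dual-norm inequality, Young, and strong convexity of $\omega$ and $h^*$, and bound the $\nabla r$ part by $V^r_z(w) + V^r_w(u)$ via the three-point identity (the paper phrases this last step as reusing the calculation \eqref{eq:threeptconvex}, which is the same computation).
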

\begin{proof}
The proof is patterned off of Lemma~\ref{lem:betterlambda}. Consider three points $z = (z\x, z\y)$, $w = (w\x, w\y)$, $u = (u\x, u\y)$. By direct calculation,
\begin{equation}\label{eq:threeterms_gennorm}\begin{aligned}
\inprod{g(w) - g(z)}{w - u} &= \inprod{w\y - z\y}{w\x - u\x} + \inprod{-w\x + z\x}{w\y - u\y} \\
&+ \mu\inprod{\nabla \omega(w\x) - \nabla \omega(z\x)}{w\x - u\x} + \inprod{\nabla h^*(w\y) - \nabla h^*(z\y)}{w\y - u\y}.
\end{aligned}\end{equation}
To bound the first line of \eqref{eq:threeterms_gennorm}, we have the following analog to \eqref{eq:accelcs}, where we use that $h^*$ is $\tfrac 1 L$-strongly convex in $\norm{\cdot}_*$ by Lemma~\ref{lem:scdual}:
\begin{align*}
\inprod{w\y - z\y}{w\x - u\x} + \inprod{z\x - w\x}{w\y - u\y} \le \norm{w\y - z\y}_* \norm{w\x - u\x} + \norm{z\x - w\x} \norm{w\y - u\y}_*\\
\le \sqrt{\frac{L}{\mu}}\Par{\frac{\mu}{2}\norm{w\x - z\x}^2 + \frac{\mu}{2}\norm{w\x - u\x}^2 + \frac{1}{2L}\norm{w\y - z\y}_*^2 + \frac{1}{2L}\norm{w\y - u\y}_*^2} \\
\le \sqrt{\frac{L}{\mu}}\Par{V^r_z(w) + V^r_w(u)}.
\end{align*}
To bound the second line of \eqref{eq:threeterms_gennorm}, we use that the calculation \eqref{eq:threeptconvex} implies that
\begin{align*}
\mu\inprod{\nabla \omega(w\x) - \nabla \omega(z\x)}{w\x - u\x} + \inprod{\nabla h^*(w\y) - \nabla h^*(z\y)}{w\y - u\y} &= \inprod{\nabla r(w) - \nabla r(z)}{w - u} \\
&\le V^r_z(w) + V^r_w(u).
\end{align*}
Combining the above two calculations in the context of \eqref{eq:threeterms_gennorm} yields the desired claim.
\end{proof}

\begin{theorem}\label{thm:acceleration}
In the setting of Lemma~\ref{lem:gennorm_sm}, consider running Algorithm~\ref{alg:mp-sm} on the monotone operator $g$ and the distance generating function $r$ initialized at $z_0 \defeq (x_0, \nabla h(x_0))$, for $T$ iterations. Every iteration consists of solving a constant number of proximal problems \eqref{eq:proxdef} in the function $\omega$, and a constant number of $d$-dimensional vector operations; moreover, we can always maintain each $z_t$ in the form $(x_t, \nabla h(v_t))$ for some explicitly computed $v_t \in \R^d$. Finally, we have
\[T \ge 4\sqrt{\frac L \mu}\log\Par{\frac{2L}{\mu} \cdot \frac{f(x_0) - f(x^*)}{\eps}} \implies f(x_T) - f(x^*) \le \eps.\]
\end{theorem}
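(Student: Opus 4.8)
The plan is to combine the three ingredients already assembled—strong monotonicity (Lemma~\ref{lem:gennorm_sm}), the accelerated relative Lipschitzness bound $\lam = 1 + \sqrt{L/\mu}$ (Lemma~\ref{lem:gennorm_rl}), and the linear convergence guarantee of strongly monotone mirror prox (Proposition~\ref{prop:mirrorprox-sm})—and then translate the resulting Bregman-divergence contraction into a function-value bound on $f$. First I would invoke Proposition~\ref{prop:mirrorprox-sm} with $m = 1$ and $\lam = 1 + \sqrt{L/\mu}$, which gives
\[
V^r_{z_T}(z^*) \le \Par{1 + \tfrac{1}{\lam}}^{-T} V^r_{z_0}(z^*) \le \exp\Par{-\tfrac{T}{\lam}} V^r_{z_0}(z^*),
\]
using $1 + 1/\lam \ge \exp(1/(2\lam))$ for $\lam \ge 1$ (or a similar elementary bound), so that $T \ge 4\sqrt{L/\mu}\log(\cdots) \ge 2\lam \log(\cdots)$ suffices to make the contraction factor at most $\mu\eps/(2L(f(x_0) - f(x^*)))$ times... — more precisely, I want the final divergence small enough to certify $f(x_T) - f(x^*) \le \eps$.

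The second step is to relate $V^r_{z_0}(z^*)$ and $V^r_{z_T}(z^*)$ to function error of $f$. For the \emph{initial} divergence, since $z_0 = (x_0, \nabla h(x_0))$ and $z^* = (x^*, \nabla h(x^*))$, separability of $r$ in \eqref{eq:rgennorm} gives $V^r_{z_0}(z^*) = \mu V^\omega_{x_0}(x^*) + V^{h^*}_{\nabla h(x_0)}(\nabla h(x^*))$; by Lemma~\ref{lem:bregofdual} the second term is $V^h_{x^*}(x_0)$, and adding $\mu V^\omega_{x_0}(x^*)$ reconstitutes $V^f_{x^*}(x_0) = f(x_0) - f(x^*)$ (using $\nabla f(x^*) = 0$). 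So $V^r_{z_0}(z^*) = f(x_0) - f(x^*)$ exactly, paralleling the computation in the proof of Theorem~\ref{thm:accel}. For the \emph{final} divergence, I need a lower bound $V^r_{z_T}(z^*) \ge c \cdot (f(x_T) - f(x^*))$ for some explicit $c$. Since $z_T = (x_T, \nabla h(v_T))$, I have $V^r_{z_T}(z^*) \ge \mu V^\omega_{x_T}(x^*) \ge \tfrac{\mu}{2}\norm{x_T - x^*}^2$ by $1$-strong convexity of $\omega$ (dropping the nonnegative dual term). Then I use that $h$ is $L$-smooth in $\norm{\cdot}$ and $\nabla f(x^*) = 0$, equivalently $\nabla h(x^*) = -\mu\nabla\omega(x^*)$, to bound $f(x_T) - f(x^*)$: writing $f = h + \mu\omega$, smoothness of $h$ gives $h(x_T) - h(x^*) \le \inprod{\nabla h(x^*)}{x_T - x^*} + \tfrac L 2 \norm{x_T - x^*}^2$, and I combine the linear term with $\mu(\omega(x_T) - \omega(x^*))$; since $x^*$ minimizes $f$ the whole first-order part is $\le 0$ up to the curvature of $\omega$, so a clean route is $f(x_T) - f(x^*) \le \tfrac{L}{\mu}\cdot \mu V^\omega_{x_T}(x^*) + (\text{something absorbed})$, ultimately $f(x_T) - f(x^*) \le \tfrac{L}{\mu} V^r_{z_T}(z^*)$ or a comparable constant. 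The factor $2L/\mu$ appearing inside the logarithm in the theorem statement strongly suggests exactly this pair of bounds ($V^r_{z_0}(z^*) = f(x_0)-f(x^*)$ on one side, $f(x_T)-f(x^*) \le \tfrac{2L}{\mu} V^r_{z_T}(z^*)$ on the other).

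Putting it together: $f(x_T) - f(x^*) \le \tfrac{2L}{\mu} V^r_{z_T}(z^*) \le \tfrac{2L}{\mu}\exp(-T/\lam)(f(x_0)-f(x^*))$, and $T \ge 4\sqrt{L/\mu}\log\!\big(\tfrac{2L}{\mu}\cdot\tfrac{f(x_0)-f(x^*)}{\eps}\big) \ge 2\lam\log\!\big(\tfrac{2L}{\mu}\cdot\tfrac{f(x_0)-f(x^*)}{\eps}\big)$ (using $\lam = 1+\sqrt{L/\mu} \le 2\sqrt{L/\mu}$) forces the right-hand side below $\eps$. The per-iteration cost claim—a constant number of proximal steps in $\omega$ plus $O(d)$ vector work, and maintaining $z_t = (x_t, \nabla h(v_t))$—follows by an argument analogous to Lemma~\ref{lem:iterateform}: the dual proximal steps of Algorithm~\ref{alg:mp-sm} against the regularizer $h^*$ are, via Lemma~\ref{lem:argmaxconj}, evaluations of $\nabla h$ at explicit convex combinations of the maintained points, and the extra $\tfrac{m}{\lam}V^r_{w_t}(z)$ term only rescales the combination coefficients. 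The main obstacle I anticipate is the second step—pinning down the exact constant in the lower bound $V^r_{z_T}(z^*) \gtrsim f(x_T) - f(x^*)$ in a general norm, since unlike the Euclidean case one cannot symmetrize the Bregman divergence freely; the cleanest handle is to bound $f(x_T) - f(x^*)$ directly via $L$-smoothness of $h$ and the optimality condition $\nabla h(x^*) + \mu\nabla\omega(x^*) = 0$, rather than trying to compare $V^\omega$ and $V^f$ divergences in both directions. Verifying the iterate-maintenance claim carefully for the strongly monotone update is also slightly more delicate than in Section~\ref{sec:acceleration} but is routine.
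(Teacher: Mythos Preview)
Your overall strategy—invoke Proposition~\ref{prop:mirrorprox-sm} with $m=1$ and $\lam = 1+\sqrt{L/\mu}$, then sandwich the resulting divergence contraction between function-error bounds—is exactly the paper's. The iterate-maintenance and per-iteration-cost arguments you sketch are also correct and match the paper's treatment. However, there is a genuine gap in your computation of the \emph{initial} divergence. You claim that $\mu V^\omega_{x_0}(x^*) + V^h_{x^*}(x_0)$ ``reconstitutes $V^f_{x^*}(x_0)$''. This is false: since $f = h + \mu\omega$, linearity of Bregman divergences gives $V^f_{x^*}(x_0) = V^h_{x^*}(x_0) + \mu V^\omega_{x^*}(x_0)$, whereas your expression has $\mu V^\omega_{x_0}(x^*)$ with the base point and argument swapped. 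Bregman divergences are not symmetric in general, so these coincide only when $\omega$ is quadratic—precisely the Euclidean setting of Theorem~\ref{thm:accel} you are trying to generalize. In a general norm there is no reason for $V^\omega_{x_0}(x^*) - V^\omega_{x^*}(x_0)$ to have a sign, so this is not merely a loose inequality.

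The paper repairs this by bounding the two mismatched terms separately: $\mu V^\omega_{x_0}(x^*) \le V^f_{x_0}(x^*)$ (since $V^h \ge 0$) and $V^h_{x^*}(x_0) \le V^f_{x^*}(x_0)$ (since $\mu V^\omega \ge 0$), and then uses the identity $V^f_{x_0}(x^*) + V^f_{x^*}(x_0) = \inprod{\nabla f(x_0) - \nabla f(x^*)}{x_0 - x^*}$ together with $L$-smoothness and $\mu$-strong convexity of $f$ to obtain $V^r_{z_0}(z^*) \le \tfrac{2L}{\mu}(f(x_0) - f(x^*))$. So the $\tfrac{2L}{\mu}$ factor actually enters on the \emph{initial} side, not the final side as you guessed; the final step is just $f(x_T) - f(x^*) \le \tfrac{L}{2}\norm{x_T - x^*}^2 \le \tfrac{L}{\mu}V^r_{z_T}(z^*)$ via smoothness of $f$ and strong convexity of $\omega$. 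Ironically, the direction you flagged as the ``main obstacle'' (lower-bounding $V^r_{z_T}(z^*)$) is the easy one, and the direction you treated as an exact equality is where the asymmetry of $V^\omega$ forces real work.
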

\begin{proof}
We first demonstrate the claimed implementability of steps of Algorithm~\ref{alg:mp-sm}. Note that from the form of iterates in Algorithm~\ref{alg:mp-sm}, each $x$ block of variables indeed results from solving proximal problems in $\omega$. On the $y$ block, the claim of the invariant (that it can be represented as some $\nabla h(v)$ for explicit $v$) follows identically to the arguments in Lemma~\ref{lem:iterateform}, where we inductively show that the $y$ block of each $z_t$, $w_t$ can be maintained as a gradient of $h$.

Next, we prove the desired error bound. We first compute, for $z^* \defeq (x^*, \nabla h(x^*))$,
\begin{align*}
V^r_{z_0}(z^*) &= \mu V^\omega_{x_0}(x^*) + V^{h^*}_{\nabla h(x_0)}(\nabla h(x^*)) = \mu V^\omega_{x_0}(x^*) + V^{h}_{x^*}(x_0) \\
&\le V^f_{x_0}(x^*) + V^f_{x^*}(x_0) = \inprod{\nabla f(x_0) - \nabla f(x^*)}{x_0 - x^*} \\
&\le L\norm{x_0 - x^*}^2 \le \frac{2L}{\mu}\Par{f(x_0) - f(x^*)}.
\end{align*}
By applying Proposition~\ref{prop:mirrorprox-sm} with the bounds from Lemmas~\ref{lem:gennorm_sm} and~\ref{lem:gennorm_rl}, we have
\[V^r_{z_T}(z^*) \le \frac{\mu\eps}{L}.\]
The conclusion follows from smoothness of $f$ and strong convexity of $\omega$, i.e.\
\[\frac{L}{\mu}V^r_{z_T}(z^*) \ge \frac{L}{\mu}V^{\mu\omega}_{x_T}(x^*) \ge \frac{L}{2}\norm{x_T - x^*}^2 \ge f(x_T) - f(x^*).\]
The constant $4$ is due to $1 + \sqrt{L/\mu} \le 2\sqrt{L/\mu}$, and $\tfrac L \mu$ being squared in the logarithm.
\end{proof} %
\section{Missing proofs from Section~\ref{sec:coordinate}}
\label{app:itermaint}

\restateavgmakessense*
\begin{proof}
	Note $\hvi$ is deterministic regardless of the sampled $i \in [d]$. Expanding for $u = (u\x, u\y)$,
	\begin{equation*}
	\begin{aligned}
	\E\left[\inprod{g_i(\wi)}{\wi - u}\right] = \sum_{i \in [d]} p_i\left(\inprod{\frac{1}{p_i} \nabla_i f(\hvi)}{\hxi - u\x} \right.\\
	\left.+ \inprod{\hvi - \left(x_t + \frac{1}{p_i} \di\right)}{\hy - u\y}\right)\\
	= \inprod{g(\bar{w}_t)}{\bar{w}_t - u}.
	\end{aligned}
	\end{equation*}
	Here, we used the fact that $\nabla_i f(\hvi)$ is 1-sparse.
\end{proof}

\restateexpectlam*
\begin{proof}
	Equivalently, we wish to show that
	\begin{equation*}
	\begin{aligned}
	\E\left[\inprod{g_i(\wi) - g_i\left(z_t\right)}{\wi - \pzi}\right]\le \left(1 + \frac{S_{1/2}}{\sqrt{\mu}}\right) \E\left[V^r_{z_t}(\wi) + V^r_{\wi}(\pzi)\right].
	\end{aligned}
	\end{equation*}
	The proof is patterned from Lemma~\ref{lem:betterlambda}. By direct calculation, the left hand side is
	\begin{equation}
	\label{eq:threetermscoord}
	\begin{aligned}
	&\E\left[\inprod{g_i(\wi) - g_i\left(z_t\right)}{\wi - \pzi}\right] \\
	= &\sum_{i \in [d]} p_i \left( \frac{1}{p_i} \inprod{\nabla_i f(\hvi) - \nabla_i f\left(v_t\right)}{\hxi - \pxi} \right.\\
	+ &\left.\frac{1}{p_i}\inprod{x_t - \hxi}{\nabla_i f(\hvi) - \nabla_i f(\pvi)} \right)\\
	+& \sum_{i \in [d]} p_i \inprod{\hvi - v_t}{\nabla f(\hvi) - \nabla f(\pvi)}.
	\end{aligned}
	\end{equation}
	We first bound the second and third lines of \eqref{eq:threetermscoord}:
	\begin{equation}
	\label{eq:twotermscontrib}
	\begin{aligned}
	\frac{1}{p_i} \left(\inprod{\nabla_i f(\hvi) - \nabla_i f\left(v_t\right)}{\hxi - \pxi} + \inprod{x_t - \hxi}{\nabla_i f(\hvi) - \nabla_i f(\pvi)}\right) \\
	\le \frac{S_{1/2}}{\sqrt{\mu}}\left(\frac{\mu}{2}\norm{\hxi - \pxi}_2^2 + \frac{1}{2L_i}\norm{\nabla_i f(\hvi) - \nabla_i f(\pvi)}_2^2 \right.\\
	\left.+ \frac{\mu}{2} \norm{x_t - \hxi}_2^2+ \frac{1}{2L_i}\norm{\nabla_i f(\hvi) - \nabla_i f\left(v_t\right)}_2^2\right) \\
	\le \frac{S_{1/2}}{\sqrt{\mu}}\left(V^r_{z_t}(\wi) + V^r_{\wi}(\pzi)\right).
	\end{aligned}
	\end{equation}
	The first inequality used the definition $p_i = \sqrt{L_i/S_{1/2}}$ and Cauchy-Schwarz, and the second used strong convexity and Lemma~\ref{lem:coordsmoothsc}. Next, we bound the fourth line of \eqref{eq:threetermscoord}:
	\begin{equation*}
	\begin{aligned}
	\inprod{\hvi - v_t}{\nabla f(\hvi) - \E\left[\nabla f(\pvi)\right]} \\
	\le V^{f^*}_{\nabla f(v_t)}\left(\nabla f(\hvi)\right) + V^{f^*}_{\nabla f(\hvi)}\left( \E\left[\nabla f(\pvi)\right]\right) \\
	\le V^{f^*}_{\nabla f(v_t)}\left(\nabla f(\hvi)\right) + \E\left[V^{f^*}_{\nabla f(\hvi)}\left(\nabla f(\pvi)\right) \right]\\
	\le \E\left[V^r_{z_t}(\wi) + V^r_{\wi}(\pzi)\right].
	\end{aligned}
	\end{equation*}
	The first inequality is \eqref{eq:threeptconvex}, the second is convexity of Bregman divergence, and the third used nonnegativity of $\tfrac{\mu}{2}\norm{\cdot}_2^2$. Combining with an expectation over \eqref{eq:twotermscontrib} yields the claim.
\end{proof}

\begin{algorithm}[ht!]
	\caption{$\textsc{EG-Coord-Accel}(x_0, \eps)$: Extragradient accelerated coordinate-smooth minimization}
	\begin{algorithmic}
		\label{alg:coord}
		\STATE \textbf{Input:} $x_0 \in \R^d$, $f$ $\{L_i\}_{i \in [d]}$-coordinate smooth and $\mu$-s.c. in $\norm{\cdot}_2$, and $\eps_0 \ge f(x_0) - f(x^*)$
		\STATE $\lam \gets 1 + \sum_{i \in [d]} \sqrt{L_i/\mu}$, $T \gets 4\lceil\lam\rceil$, $K \gets \lceil\log_2\tfrac{\eps_0}{\eps}\rceil$,  $\mathbf{A} \gets \begin{pmatrix}
		1 & \frac{1}{\kappa} - \frac{1}{\kappa^2}\\
		0 & 1 - \frac{1}{\kappa} + \frac{1}{\kappa^2}
		\end{pmatrix}$
		\STATE $p_0 \gets x_0$, $q_0 \gets x_0$, $\mathbf{B_0} \gets \mathbf{I}_{2 \times 2}$
		\FOR {$0 \le k < K$}
		\STATE Sample $\tau$ uniformly in $[0, T - 1]$
		\FOR {$0 \le t < \tau$}
		\STATE Sample $i \propto \sqrt{L_i}$
		\STATE Compute $\nabla_i f(v_t)$, $\nabla_i f((1 - \lam^{-1}) v_t + \lam^{-1} x_t)$ via generalized partial derivative oracle
		\STATE $s_t \gets \begin{pmatrix}
		\tfrac{1}{\mu\lam p_i} \nabla_i f((1 - \lam^{-1}) v_t + \lam^{-1} x_t) &
		\tfrac{1}{\mu\lam^2 p_i^2} \nabla_i f(v_t)
		\end{pmatrix}$
		\STATE $\mathbf{B_{t + 1}} \gets \mathbf{B_t}\mathbf{A}$, $\begin{pmatrix}
		p_{t + 1} & q_{t + 1}
		\end{pmatrix} \gets \begin{pmatrix}
		p_t & q_t
		\end{pmatrix} - s_t \mathbf{B_{t + 1}}^{-1}$ 
		\ENDFOR
		\STATE $\mathbf{B_0} \gets \begin{pmatrix}[\mathbf{B_\tau}]_{12}& [\mathbf{B_\tau}]_{12} \\ [\mathbf{B_\tau}]_{22} & [\mathbf{B_\tau}]_{22}\end{pmatrix}$, $p_0 \gets p_\tau$, $q_0 \gets q_\tau$
		\ENDFOR
		\RETURN $[\mathbf{B_\tau}]_{12} p_\tau + [\mathbf{B_\tau}]_{22} q_\tau$
	\end{algorithmic}
\end{algorithm}

\begin{lemma}
	\label{lem:impl}
	Suppose at step $t$, for $\mathbf{B_t} \in \mathbb{R}^{2 \times 2}$, $p_t, q_t \in \mathbb{R}^d$, we maintain
	\begin{align*}
	\begin{pmatrix}
	x_t & v_t
	\end{pmatrix} = 
	\begin{pmatrix}
	p_t & q_t
	\end{pmatrix} \mathbf{B_t}.
	\end{align*}
	Then, for any sampled $i$, we can compute $\mathbf{B_{t + 1}} \in \mathbb{R}^{2 \times 2}$, $p_{t + 1}, q_{t + 1} \in \mathbb{R}^d$ such that
	\begin{align*}
	\begin{pmatrix}
	\px & \pv
	\end{pmatrix} = 
	\begin{pmatrix}
	p_{t + 1} & q_{t + 1}
	\end{pmatrix} \mathbf{B_{t + 1}},
	\end{align*}
	using two generalized partial derivative oracle queries and constant additional work.
\end{lemma}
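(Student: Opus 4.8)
The plan is to unroll a single inner iteration of \eqref{eq:coordmp} into explicit update formulas, observe that they express the new iterate $(\pxi,\pvi)$ as a fixed $2\times 2$ linear map applied to $(x_t,v_t)$ plus a correction supported on the single sampled coordinate $i$, and then carry this structure through the factorization $\begin{pmatrix} x_t & v_t\end{pmatrix}=\begin{pmatrix} p_t & q_t\end{pmatrix}\mathbf{B_t}$, so that only a constant-size matrix and one coordinate of the pair $(p_t,q_t)$ ever change.

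First I would write down the iterates. Since the $x$-block of $r$ is $\tfrac{\mu}{2}\norm{\cdot}_2^2$, the $x$-component of each prox step in \eqref{eq:coordmp} is an ordinary gradient step, and since the $x$-component of $g_i$ involves only $\nabla_i f$, this step is $1$-sparse: $\hxi=x_t-\tfrac{1}{\mu\lam p_i}\nabla_i f(v_t)e_i$ and $\pxi=x_t-\tfrac{1}{\mu\lam p_i}\nabla_i f(\hvi)e_i$, so $\di=-\tfrac{1}{\mu\lam p_i}\nabla_i f(v_t)e_i$. For the $y$-block I would repeat the conjugacy argument of Lemma~\ref{lem:iterateform} via Lemma~\ref{lem:argmaxconj}: with $y_t=\nabla f(v_t)$, the prox steps in $f^*$ produce $\hy=\nabla f(\hvi)$ and the $y$-block of $\pzi$ equals $\nabla f(\pvi)$, where
\[\hvi=\Par{1-\tfrac1\lam}v_t+\tfrac1\lam x_t, \qquad \pvi=v_t+\tfrac1\lam\Par{x_t+\tfrac1{p_i}\di-\hvi}.\]
Substituting the expressions for $\hvi$ and $\di$ gives $\pvi=\Par{\tfrac1\lam-\tfrac1{\lam^2}}x_t+\Par{1-\tfrac1\lam+\tfrac1{\lam^2}}v_t-\tfrac{1}{\mu\lam^2 p_i^2}\nabla_i f(v_t)e_i$. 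Together with the formula for $\pxi$, this reads exactly as $\begin{pmatrix}\pxi & \pvi\end{pmatrix}=\begin{pmatrix} x_t & v_t\end{pmatrix}\mathbf{A}-s_t$, where $\mathbf{A}$ is the fixed matrix of Algorithm~\ref{alg:coord} with $\kappa=\lam$ and $s_t\in\R^{d\times 2}$ is supported on row $i$, that row being $\begin{pmatrix}\tfrac1{\mu\lam p_i}\nabla_i f(\hvi)&\tfrac1{\mu\lam^2 p_i^2}\nabla_i f(v_t)\end{pmatrix}$.

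Given this, I would set $\mathbf{B_{t+1}}\defeq\mathbf{B_t}\mathbf{A}$ and $\begin{pmatrix}p_{t+1}&q_{t+1}\end{pmatrix}\defeq\begin{pmatrix}p_t&q_t\end{pmatrix}-s_t\mathbf{B_{t+1}}^{-1}$. Since $\mathbf{A}$ is upper triangular with nonzero diagonal entries $1$ and $1-\lam^{-1}+\lam^{-2}>0$, it is invertible, so $\mathbf{B_{t+1}}$ is invertible whenever $\mathbf{B_t}$ is, which holds inductively from $\mathbf{B_0}=\mathbf{I}$. Then, using the inductive invariant,
\[\begin{pmatrix}p_{t+1}&q_{t+1}\end{pmatrix}\mathbf{B_{t+1}}=\begin{pmatrix}p_t&q_t\end{pmatrix}\mathbf{B_t}\mathbf{A}-s_t=\begin{pmatrix}x_t&v_t\end{pmatrix}\mathbf{A}-s_t=\begin{pmatrix}\pxi&\pvi\end{pmatrix},\]
which is the invariant at step $t+1$. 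For the cost accounting: $\nabla_i f(v_t)$ and $\nabla_i f(\hvi)=\nabla_i f\Par{(1-\lam^{-1})v_t+\lam^{-1}x_t}$ are partial derivatives of $f$ at fixed linear combinations of $x_t$ and $v_t$, hence of $p_t$ and $q_t$ through the columns of $\mathbf{B_t}$; each is one query to the generalized partial derivative oracle, for two in total, and $f$ is accessed nowhere else. Forming $\mathbf{B_{t+1}}=\mathbf{B_t}\mathbf{A}$ and its inverse is $O(1)$ work, $s_t$ has a single nonzero row so $s_t\mathbf{B_{t+1}}^{-1}$ costs $O(1)$, and hence $p_{t+1},q_{t+1}$ differ from $p_t,q_t$ in coordinate $i$ only, so updating them is $O(1)$ as well.

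I expect the only real difficulty to be bookkeeping rather than anything conceptual: the $y$-block prox steps must be rewritten carefully via Fenchel conjugacy so that the $v$-recursion is genuinely affine in $(x_t,v_t)$ up to a $1$-sparse term — this is precisely where the ``shared randomness'' of \eqref{eq:coordmp}, i.e.\ that $\di$ is $1$-sparse, is used — and one must check that the single modified coordinate $i$ propagates consistently through the two-dimensional representation so that the per-iteration overhead is genuinely constant; the invertibility of each $\mathbf{B_t}$ should be tracked inductively as above.
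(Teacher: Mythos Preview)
Your proposal is correct and follows essentially the same approach as the paper: you derive the same closed-form updates for $\hxi,\pxi,\hvi,\pvi$, express $\begin{pmatrix}\pxi & \pvi\end{pmatrix}=\begin{pmatrix}x_t & v_t\end{pmatrix}\mathbf{A}-s_t$ with the same $\mathbf{A}$ and sparse $s_t$, and propagate the factorization via $\mathbf{B_{t+1}}=\mathbf{B_t}\mathbf{A}$ and $\begin{pmatrix}p_{t+1}&q_{t+1}\end{pmatrix}=\begin{pmatrix}p_t&q_t\end{pmatrix}-s_t\mathbf{B_{t+1}}^{-1}$. If anything, your version is slightly more careful than the paper's, since you explicitly verify the invertibility of $\mathbf{B_{t+1}}$ inductively and spell out why the per-iteration cost is $O(1)$.
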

\begin{proof}
Suppose on iteration $t$ that coordinate $i$ was sampled. In closed form, we have by the definition of gradient estimators \eqref{eq:coordmp} the updates to the $x$ component
\begin{align*}
\hx = x_t - \frac{1}{\mu\lam p_i} \nabla_i f(v_t),\; x_{t + 1} = x_t - \frac{1}{\mu\lam p_i} \nabla_i f(\hv).
\end{align*}
Thus, if we can maintain the implicit representation of $v_t$ and $\hv$, we can compute these updates via a partial derivative oracle. Next, the proof of Lemma~\ref{lem:iterateform} implies in closed form
\begin{align*}\hvi = \Par{1 - \frac{1}{\lam}} v_t + \frac{1}{\lam} x_t,\\
\pv = v_t + \frac{1}{\lam}\Par{x_t + \frac{1}{p_i}\di - \hv} = \Par{1 - \frac{1}{\lam} + \frac{1}{\lam^2}} v_t + \Par{\frac{1}{\lam} - \frac{1}{\lam^2}} x_t - \frac{1}{\mu\lam^2 p_i^2}\nabla_i f(v_t),\end{align*}
where we have already computed $\nabla_i f(v_t)$. Therefore, the update can be written as, for 2-sparse $s_t$,
\begin{equation*}
\begin{aligned}
\begin{pmatrix}
\px & \pv
\end{pmatrix} = 
\begin{pmatrix}
x_{t} & v_{t}
\end{pmatrix} \mathbf{A} - s_t \\
 \textrm{where} \; \mathbf{A} \defeq
\begin{pmatrix}
1 & \frac{1}{\kappa} - \frac{1}{\kappa^2} \\
0 & 1 - \frac{1}{\kappa} + \frac{1}{\kappa^2}
\end{pmatrix},\; s_t \defeq 
\begin{pmatrix}
\frac{1}{\mu\lam p_i} \nabla_i f(\hv) &
\frac{1}{\mu\lam^2 p_i^2} \nabla_i f(v_t)
\end{pmatrix}.
\end{aligned}
\end{equation*}
We can therefore maintain this implicitly via the equivalent step (doing constant extra work),
\begin{equation*}
\mathbf{B_{t + 1}} = \mathbf{B_t}\mathbf{A}, \;
\begin{pmatrix}
p_{t + 1} & q_{t + 1}
\end{pmatrix} = \begin{pmatrix}
p_t & q_{t}
\end{pmatrix} - s_t \mathbf{B_{t + 1}}^{-1}.
\end{equation*}
\end{proof} 	%
\section{Optimism}
\label{appendix:area_convex}

In this section, we discuss the relationship of our condition (relative Lipschitzness) with the condition of ``optimism'' proposed by \cite{RakhlinS13}, which is known to recover the mirror prox algorithm for Lipschitz operators. Specifically, the optimistic mirror descent procedure of \cite{RakhlinS13} iterates the following steps (see Equation 1 of their paper), for strongly convex $r$:
\begin{equation}\label{eq:optmd}w_t \gets \Prox^r_{z_t}\Par{\eta M_t},\; z_{t + 1} \gets \Prox^r_{z_t}\Par{\eta g(w_t)}.\end{equation}
Here, $M_t$ is a vector which ideally ``resembles'' the point $\eta g(w_t)$, capturing a notion of ``predictability.'' Under Lipschitzness of $g$, \cite{RakhlinS13} notes that simply choosing $M_t = g(z_t)$ and $\eta = \tfrac{1}{L}$ yields an algorithm identical to Algorithm~\ref{alg:mp}, where the notion of predictable sequences comes from stability of $g$. Rakhlin and Sridharan prove the following claim about their procedure (Lemma 1, \cite{RakhlinS13}):
\begin{proposition}[Optimistic mirror descent]\label{prop:optimism}
For $g: \zset \rightarrow \zset^*$ and any $u \in \zset$,
\begin{align*}\sum_{0 \le t < T} \inprod{g(w_t)}{w_t - u} &\le \frac{V^r_{z_0}(u)}{\eta} + \sum_{0 \le t < T} \norm{M_t - g(w_t)}_* \norm{w_t - z_{t + 1}}\\
&- \frac{1}{2\eta}\Par{\norm{w_t - z_t}^2 + \norm{w_t - z_{t + 1}}^2}.\end{align*}
\end{proposition}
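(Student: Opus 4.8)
The plan is to mirror the proof of Proposition~\ref{prop:mirrorprox} almost verbatim, with the single appeal to relative Lipschitzness replaced by a Cauchy--Schwarz estimate against the hint vector $M_t$. Throughout I would assume without loss of generality that $r$ is $1$-strongly convex in $\norm{\cdot}$ (otherwise rescale $\eta$ by the strong convexity modulus, which is how $\eta = 1/\lam$ will eventually arise for $\mu$-strongly convex $r$).

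First I would apply the proximal optimality characterization Lemma~\ref{lem:proxprop} to each of the two steps in \eqref{eq:optmd}: to $w_t = \Prox^r_{z_t}(\eta M_t)$ with test point $z_{t+1}$, and to $z_{t+1} = \Prox^r_{z_t}(\eta g(w_t))$ with test point $u$. This yields
\begin{align*}
\eta\inprod{M_t}{w_t - z_{t+1}} &\le V^r_{z_t}(z_{t+1}) - V^r_{w_t}(z_{t+1}) - V^r_{z_t}(w_t),\\
\eta\inprod{g(w_t)}{z_{t+1} - u} &\le V^r_{z_t}(u) - V^r_{z_{t+1}}(u) - V^r_{z_t}(z_{t+1}).
\end{align*}
Adding these, the $\pm V^r_{z_t}(z_{t+1})$ terms cancel; writing $\inprod{g(w_t)}{w_t - u} = \inprod{g(w_t) - M_t}{w_t - z_{t+1}} + \inprod{M_t}{w_t - z_{t+1}} + \inprod{g(w_t)}{z_{t+1} - u}$ I would rearrange to obtain
\[\eta\inprod{g(w_t)}{w_t - u} \le V^r_{z_t}(u) - V^r_{z_{t+1}}(u) + \eta\inprod{g(w_t) - M_t}{w_t - z_{t+1}} - V^r_{w_t}(z_{t+1}) - V^r_{z_t}(w_t),\]
which is exactly the analogue of \eqref{eq:telescope} with the extragradient term $\inprod{g(w_t) - g(z_t)}{w_t - z_{t+1}}$ replaced by $\inprod{g(w_t) - M_t}{w_t - z_{t+1}}$ and with no use of relative Lipschitzness.

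Next I would bound $\inprod{g(w_t) - M_t}{w_t - z_{t+1}} \le \norm{M_t - g(w_t)}_*\,\norm{w_t - z_{t+1}}$ by the definition of the dual norm, and use $1$-strong convexity of $r$ to get $V^r_{w_t}(z_{t+1}) \ge \tfrac12\norm{w_t - z_{t+1}}^2$ and $V^r_{z_t}(w_t) \ge \tfrac12\norm{w_t - z_t}^2$. Dividing through by $\eta$, summing over $0 \le t < T$ so the $V^r_{\cdot}(u)$ terms telescope, and discarding the nonnegative term $-\tfrac1\eta V^r_{z_T}(u)$ then yields the stated bound.

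The argument is entirely routine; the only thing to be careful about is the sign bookkeeping when combining the two optimality inequalities, and there is no real obstacle. I would also point out the connection this section is after: taking $M_t = g(z_t)$ and $\eta = 1/\lam$ makes \eqref{eq:optmd} identical to Algorithm~\ref{alg:mp}, and then $\inprod{g(w_t) - M_t}{w_t - z_{t+1}} = \inprod{g(w_t) - g(z_t)}{w_t - z_{t+1}}$ is precisely the quantity controlled by relative Lipschitzness (Definition~\ref{def:spc}); bounding it that way instead of by Cauchy--Schwarz and discarding the resulting negative divergence terms recovers Proposition~\ref{prop:mirrorprox}.
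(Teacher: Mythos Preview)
Your proposal is correct and follows essentially the same approach the paper indicates: the paper does not give a detailed proof but states that Proposition~\ref{prop:optimism} ``follows by the proofs we give of Proposition~\ref{prop:mirrorprox} and Lemma~\ref{lem:basiclam}, where we apply Cauchy-Schwarz and strong convexity of $r$,'' which is exactly the route you take (two applications of Lemma~\ref{lem:proxprop}, add and rearrange as in \eqref{eq:telescope}, then Cauchy--Schwarz on $\inprod{g(w_t)-M_t}{w_t-z_{t+1}}$ and strong convexity to lower-bound the two divergences).
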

Indeed, Proposition~\ref{prop:optimism} follows by the proofs we give of Proposition~\ref{prop:mirrorprox} and Lemma~\ref{lem:basiclam}, where we apply Cauchy-Schwarz and strong convexity of $r$. However, crucially our proof in Section~\ref{sec:acceleration} of the accelerated rate bypasses this direct application of Cauchy-Schwarz to the quantity $M_t - g(w_t)$, and couples terms in a way which can yield a tighter relative Lipschitz parameter (see Lemma~\ref{lem:betterlambda}). In this sense, relative Lipschitzness can  sharpen the rate of convergence for optimistic mirror descent achieved by \cite{RakhlinS13}, which was crucial for our improved guarantees. \end{appendix}

\end{document}